\numberwithin{equation}{section}
\title{Hikita conjecture for classical Lie algebras}
\begin{document}
\author{Do Kien Hoang} 
\date{}
\maketitle
\begin{abstract}
    Let $G$ be $Sp_{2n}$, $SO_{2n}$ or $SO_{2n+1}$ and let $G^\vee$ be its Langlands dual group. In \cite{Barbasch1985}, Barbasch and Vogan based on earlier work of Lusztig and Spaltenstein, define a duality map $D$ that sends nilpotent orbits $\bO_\ev \subset \fg^\vee$ to special nilpotent orbits $\bO_e\subset \fg$. In \cite{refinedbvls}, an upgraded version $\tilde{D}$ of this duality is considered, called the refined BVLS duality. $\tilde{D}(\bO_\ev)$ is a $G$-equivariant cover $\tilde{\bO}_e$ of $\bO_e$. Let $S_{\ev}$ be the nilpotent Slodowy slice of the orbit $\bO_{\ev}$. The two varieties $X^\vee= S_{\ev}$ and $X= \Spec(\bC[\tilde{\bO}_e])$ are expected to be symplectic dual to each other, \cite{refinedbvls}. In this context, a version of the Hikita conjecture predicts an isomorphism between the cohomology ring of the Springer fiber $\spr$ and the ring of regular functions on the scheme-theoretic fixed point $X^T$ for some torus $T$. This paper verifies the isomorphism for certain pairs $e$ and $e^\vee$. These cases are expected to cover almost all instances in which the Hikita conjecture holds when $e^\vee$ regular in a Levi $\fl^\vee\subset \fg^\vee$. Our results in these cases follow from the relations of three different types of objects: generalized coinvariant algebras, equivariant cohomology rings, and functions on scheme-theoretic intersections. We also give evidence for the Hikita conjecture when $e^\vee$ is distinguished.
\end{abstract}
\section{Introduction}
\subsection{Context}
Let $G$ be a semisimple Lie group, and let $G^\vee$ be its Langlands dual. Write $\fg$ and $\fg^\vee$ for their Lie algebras. In the following, by a \textit{nilpotent cover} we mean a $G$-equivariant cover of a nilpotent orbit $\bO\subset \fg$. In \cite[Section 9]{refinedbvls}, the authors introduce a refined BVLS (Barbasch-Vogan-Lusztig-Spaltenstein) duality map:
$$\tilde{D}: \{\text{nilpotent orbits of }\fg^\vee\} \hookrightarrow \{\text{equivalence classes of nilpotent covers of } \fg \}.$$

Consider a nilpotent element $e^\vee\in \fg^\vee$. Write $\bO_\ev$ for the orbit of $e^\vee$. The image $\tilde{D}(\bO_\ev)$ is a $G$-equivariant cover of $D(\bO_\ev)$ where $D$ is the BVLS duality defined by Barbasch and Vogan in \cite{Barbasch1985}. Let $e$ be an element in $D(\bO_\ev)$. Write $\bO_e$ for the adjoint orbit of $e$ and $\tilde{\bO}_e$ for the cover $\tilde{D}(\bO_\ev)$. When $\fg$ is of classical type, the nilpotent orbits in $\fg$ and $\fg^\vee$ are classified by partitions (\cite{collingwood1993nilpotent}). In type A, $\tilde{D}$ coincides with $D$; the partition of $e$ is obtained from the partition of $e^\vee$ by taking the transpose. In type B,C, or D, the combinatorial relation between the two partitions is more complicated; details are given in \cite[Theorem 5.2]{McGovern1994}. 

Let $T$ be a maximal torus of $G$ and let $\fh$ be its Lie algebra. The torus $T$ acts on $\tilde{\bO}_{e}$. Let $\nu: \bC^\times \rightarrow T$ be a generic cocharacter of $T$. In the notation of \cite[Sections 4,5]{Losev2017}, the \textit{Cartan subquotient} $\cC_\nu (\bC[\tilde{\bO}_{e}])$ is the algebra $$\bC[\tilde{\bO}_{e}]_0/\sum_{i>0} \bC[\tilde{\bO}_{e}]_{-i} \bC[\tilde{\bO}_{e}]_{i}$$ where the subscript denotes the grading induced by $\nu(\bC^\times)$. Alternatively, we can view $\cC_\nu (\bC[\tilde{\bO}_{e}])$ as the algebra of functions on the scheme-theoretic fixed point $\Spec(\bC[\tilde{\bO}_e])^{T}$. When $\tilde{\bO}_{e}= \bO_e$ and the closure of $\bO_e$ is normal, the algebra $\cC_\nu (\bC[\tilde{\bO}_{e}])$ is identified with $\bC[\overline{\bO}_e\cap \fh]$, the functions on the scheme-theoretic intersection $\overline{\bO}_e\cap \fh$. Consider the grading of this algebra so that the coordinate functions of $\fh$ have degrees $2$. 

Write $\spr$ for the Springer fiber over $e^\vee$. Write $S_\ev$ for the nilpotent Slodowy slice of $\ev$, a certain transverse slice of $\bO_\ev$ in the nilpotent cone of $\fg^\vee$ (see, e.g., \cite[Section 3.7]{CG}). When $\fg$ is of type $A$, a well-known result is that there is an isomorphism between the two graded algebras $H^*(\spr)$ and $\bC[\overline{\bO}_e\cap \fh]$. Various proofs and generalizations of this result can be found in \cite{deConcini1981}, \cite{Tanisaki}, \cite{Brundan2011},... The isomorphism $H^*(\spr) \cong \bC[\overline{\bO}_e\cap \fh]$ is often regarded as an instance of the Hikita conjecture for the pair $(S_\ev, \Spec(\bC[\bO_e]))$ in the context of symplectic duality, see, e.g., \cite{Hikita2016}. More generally, it is expected that the varieties $S_\ev$ and $\Spec(\bC[\tilde{\bO}_e]))$ are symplectic dual to each other; see \cite[Section 9.3]{refinedbvls}. Therefore, it makes sense to consider a version of the Hikita conjecture that predicts $H^*(\spr)\cong \cC_\nu(\bC[\tilde{\bO}_e])$. This is the subject of \cite{HMK2024}. In loc.cit., we consider a broader context in which the two varieties $S_\ev$ and the affininization of $\tilde{\bO}_e$ are replaced by suitable parabolic Slodowy slices.

In \cite[Section 6]{HMK2024}, the authors have pointed out that the expectation $H^*(\spr)\cong \cC_\nu(\bC[\tilde{\bO}_e])$ generally fails. We then propose various modifications to the Hikita conjecture and its deformed version. Here, the term "deformed Hikita conjecture" is from \cite[Section 9.3]{refinedbvls}. Despite these results, the question of when the original Hikita conjecture holds is still worth investigating. In particular, we want to determine all pairs $(e, e^\vee)$ so that $H^*(\spr)\cong \cC_\nu(\bC[\tilde{\bO}_e])$. The main goal of this paper is to provide an almost complete answer to this question in the setting that $\fg$ is of classical type, $e^\vee$ is regular in some Levi subalgebra $\lv\subset \fg^\vee$, the orbit $\bO_e$ has normal closure, and $\tilde{\bO}_{e}= \bO_e$. By “almost complete", we mean that we predict the precise cases where the conjecture holds and verify most of these cases. We also give evidence that the Hikita conjecture holds in some cases outside of this setting. It would be desirable to understand why the Hikita conjecture is true in these few cases. For example, in Section 5 we prove the Hikita conjecture for $e^\vee$ that has a 2-row partition. In these cases, the Springer fiber $\spr$ and the Slodowy variety admit realizations in terms of Nakajima's quiver varieties of type D (\cite{Henderson2014}).

\subsection{Main results}
Assume that $e^\vee$ is regular in a Levi subalgebra $\lv\subset \fg^\vee$. Write $\fl\subset \fg$ for the Langlands dual of $\lv$. From \cite[Section 3.2]{refinedbvls}, the orbit of $e$ in $\fg$ is obtained by the Lusztig-Spaltenstein induction from the orbit $\{0\}$ of $\fl$. Hence, we also write $\Ind_{\fl}^{\fg}(\{0\})$ for $\bO_e$. According to \cite[Sections 2,3]{refinedbvls}, there is a parabolic subalgebra $\fp\subset \fg$ with Levi $\fl$ so that the natural map $T^*(G/P)\rightarrow \fg$ has the image $\overline{\bO}_e$ and the preimage of $\bO$ is $\tilde{\bO}_e$. When discussing the Hikita conjecture in this case, we make the following assumptions.
\begin{enumerate}
    \item The closure of $\bO_e$ is normal.
    \item $\tilde{\bO}_{e}= \bO_e$. Equivalently, $e$ is birationally Richardson.
\end{enumerate}
Let $\ft_e\subset \fl\subset \fg$ be the center of $\fl$, then $\ft_e$ consists of semisimple elements. We say $x\in \ft_e$ is generic if the centralizer of $x$ in $\fg$ is precisely $\fl$. Consider a generic element $x$ in $\ft_e$. Let $J_{x}$ be the defining ideal of the closed orbit ${G.x}$ in $\fg$. Let $W$ be the Weyl group of $G$. The next paragraph follows from \cite[Proposition 4]{conjugacyclass}.

The natural grading of $\bC[\fg]$ induces a filtration on $J_{x}$. Then $J_{e}= \gr J_{x}$ is the defining ideal of $\overline{\bO}_e$ in $\fg$. Consider a Cartan subalgebra $\fh$ of $\fl$ so that $x\in \fh$. Write $I_{e}$ for the ideal of $\bC[\fh]$ such that $\bC[\overline{\bO}_e\cap \fh]= \bC[\fh]/I_e$. Then $I_e\subset \gr I_x$ where $I_{x}$ is the defining ideal of the orbit $W.x= G.x\cap \fh$ in $\fh$. Thus we get a surjective $W$-equivariant map of algebras
\begin{equation} \label{flat}
    \bC[\overline{\bO}_e\cap \fh] \twoheadrightarrow  \bC[\fh]/ \gr I_{x}.
\end{equation}
 Note that the ideal $\gr I_x$ does not depend on the choice of the generic element $x$. We have proved the following in \cite[Section 6.4]{HMK2024}.
 \begin{pro} \label{equivalent statements}
 Assume that $\overline{\bO}_e$ is normal and $e$ is birationally Richardson. The Hikita conjecture holds if and only if both of the following conditions hold.
 \begin{enumerate}
     \item The quotient morphism (\ref{flat}) is an isomorphism. We call it \textit{the weak flatness condition}.
     \item We have an isomorphism of rings $H^*(\spr)\cong \bC[\fh]/ \gr I_{x}$. 
 \end{enumerate}
\end{pro}

The central object of these two statements is the algebra $\bC[\fh]/ \gr I_{x}$. In Section 2, we attempt to describe this algebra as a quotient of $\bC[\fh]$. Our more general aim is to find a set of generators of $I_x$ that is \textit{uniform} in the following sense. Write $y_1,...,y_n$ for the coordinate functions of $\fh$ and $t_1,...,t_k$ for the coordinate functions of $\ft_e$. 
\begin{defin} \label{uniform}
    A set of polynomials $P_1,...,P_m$ in the variables $y_1,...,y_n, t_1,...,t_k$ is called a set of \textit{uniform generators} of $I_x$ if they satisfy the following two conditions.
\begin{enumerate}
    \item For each generic $x=(x_1,...,x_k)\in \ft_e$, the specializations of $P_1,...,P_m$ to the point $t_i=x_i$ generate the ideal $I_x$.
    \item The homogeneous ideal $\gr I_{x}$ is generated by the specializations of $P_1,...,P_m$ to the point $(t_1,...,t_k)= (0,...,0)$. 
\end{enumerate}
Consequently, the algebra $\bC[y_1,...,y_n,t_1,...,t_k]/(P_1,...,P_m)$ is flat over $\bC[t_1,...,t_k]$.
\end{defin}

Let $n$ be the rank of $\fg$. Let $\fg(a)$ be the rank $a$ Lie algebra of the same type as $\fg$. The Levi subalgebra $\fl\subset \fg$ takes the form $\fg(a)\times \prod_{i=1}^{k} \fgl_{b_i}$ for $a+b_1+...+b_k= n$, $b_1\geqslant ...\geqslant b_k$. In Section 2 we obtain a set of uniform generators of $I_x$ when $a=0$ and obtain a conjectural description of uniform generators for $a>0$. We check that our set of generators for $a\neq 0$ works in the case $b_1= 1,2$. Our results in this section can be considered as a generalization of the result in type A from \cite{GARSIA199282}.

In Section 3, we discuss when the weak flatness condition holds. When $\fg$ is of type C, the closure of $\bO_e$ is normal if and only if $2a\leqslant b_k$ (\cite[Section 10.1]{HMK2024}). Our main result in type C is as follows. %Note that the surjection $\bC[\overline{G.e}\cap \fh] \twoheadrightarrow  \bC[\fh]/ \gr I_{x}$ is a isomorphism if and only if it is an isomorphism of $W$-modules. In other words, the weak flatness condition holds if and only if $\bC[\overline{G.e}\cap \fh]$ is isomorphic to the induced representation $\Ind_{W_L}^{W}(\{1\})$. 
\begin{thm}[\cref{main sp}]
    Consider $\fg= \fsp_{2n}$, and let $\fl= \fsp_{2a} \times \prod_{i=1}^{k} \fgl_{b_i}$ be a Levi subalgebra of $\fg$. The weak flatness condition holds for the orbit $\bO_e= \Ind_\fl^{\fg}(\{0\})$ if and only if $a=0$. Moreover, we have an explicit description of the algebra $\bC[\overline{\bO}_e\cap \fh]$ when $a=0$.
\end{thm}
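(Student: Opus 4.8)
The plan is to deduce both assertions from the surjection \textup{(\ref{flat})}. The first step is to identify its target: since the chosen Cartan $\fh$ contains $\ft_e$ and $\ft_e=\fh^{W_\fl}$ (the fixed space of the Weyl group $W_\fl$ of $\fl$), a generic $x\in\ft_e$ has centralizer $\fl$ and hence $W$-stabilizer exactly $W_\fl$; so $W.x=G.x\cap\fh$ is a reduced set of $|W/W_\fl|$ points and $\dim_\bC\bC[\fh]/\gr I_x=\dim_\bC\bC[\fh]/I_x=|W/W_\fl|=2^{n}n!/(2^{a}a!\prod_i b_i!)$. As \textup{(\ref{flat})} is a graded surjection, the weak flatness condition is equivalent to the single numerical equality $\dim_\bC\bC[\overline{\bO}_e\cap\fh]=|W/W_\fl|$. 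Both halves of the theorem are thus statements about the scheme $\overline{\bO}_e\cap\fh$ (of finite length, since $\overline{\bO}_e\cap\fh=\{0\}$ set-theoretically), equivalently about the ideal $I_e$.

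For the case $a=0$ one has $\fl=\prod_i\fgl_{b_i}$, the closure $\overline{\bO}_e$ is automatically normal, and Section~2 supplies explicit uniform generators $P_1,\dots,P_m$ of $I_x$ in the sense of Definition~\ref{uniform}. By that definition the homogeneous polynomials $P_i|_{t=0}$ generate $\gr I_x$, so $\bC[\fh]/(P_1|_{t=0},\dots,P_m|_{t=0})$ has dimension $|W/W_\fl|$; it therefore suffices to check $P_i|_{t=0}\in I_e$ for every $i$, which forces \textup{(\ref{flat})} to be an isomorphism and at the same time identifies $\bC[\overline{\bO}_e\cap\fh]$ with $\bC[\fh]/(P_i|_{t=0})$ --- the promised explicit description. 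To see $P_i|_{t=0}\in I_e$ I would lift each $P_i$ to a polynomial $Q_i\in\bC[\fg\times\ft_e]$, uniform in the same sense for the ideal $J_x\subset\bC[\fg]$, whose specialization $Q_i|_{t=0}$ lies in the defining ideal $J_e$ of $\overline{\bO}_e$; since $I_e$ is generated by the restrictions to $\fh$ of elements of $J_e$, this gives $P_i|_{t=0}=(Q_i|_{t=0})|_{\fh}\in I_e$. The $Q_i$ should be the natural rank conditions: an orbit induced from a product of $\fgl$-Levi's in type C has $\overline{\bO}_e$ cut out scheme-theoretically by explicit vanishing of minors of $N,N^2,\dots$ (this is where normality enters; it is the type-C counterpart of the type-A computation in \cite{GARSIA199282}, cf.\ \cite[\S10]{HMK2024}), and restricting these to $\fh$, where $N$ becomes diagonal, returns exactly the specializations $P_i|_{t=0}$.

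For the case $a\ge1$ I would instead prove the strict inequality $\dim_\bC\bC[\overline{\bO}_e\cap\fh]>|W/W_\fl|$, so that \textup{(\ref{flat})} has nonzero kernel. (When $2a>b_k$ the closure $\overline{\bO}_e$ is not normal and the running hypotheses are not in force; within the normal range $2a\le b_k$ one still gets strict inequality for every $a\ge1$.) The extra length in $\overline{\bO}_e\cap\fh$ beyond $|W/W_\fl|$ comes from the symplectic factor $\fsp_{2a}\subset\fl$: the partition of $\bO_e$ acquires an odd part of size $2k+1\ge3$, the ideal $J_e$ is no longer of the simple determinantal shape of the $a=0$ case, and I would make the discrepancy explicit either by exhibiting a homogeneous $f\in\bC[\fh]$, built from the coordinates of the $\fsp_{2a}$-block, that is a leading term of an element of $I_x$ (so $f\in\gr I_x$) but does not vanish on $\overline{\bO}_e\cap\fh$ (so $f\notin I_e$), or by comparing the Hilbert series of $\bC[\overline{\bO}_e\cap\fh]$ --- computed from the explicit presentation of $\overline{\bO}_e$ together with the identification of $\bC[\overline{\bO}_e]$ with the regular functions on the birational resolution $T^*(G/P)$ --- against that of $\bC[\fh]/\gr I_x$.

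The main obstacle in both directions is the same: controlling $\overline{\bO}_e\cap\fh$, i.e.\ the ideal $I_e$, precisely enough --- for $a=0$ to see that the Section~2 generators already suffice, and for $a\ge1$ to see that no presentation of $I_e$ can cut its colength down to $|W/W_\fl|$. This rests on an explicit, normality-dependent description of $\overline{\bO}_e$ in type C, which is the technical heart of Section~3.
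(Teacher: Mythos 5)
Your high-level plan for $a=0$ — show that the specializations $P_i|_{t=0}$ of the uniform generators from Section~2 lie in $I_e$, so that the surjection \textup{(\ref{flat})} is forced to be an isomorphism by the dimension count — matches the paper's strategy. But the mechanism you propose for producing $P_i|_{t=0}\in I_e$ has a genuine gap. You want to lift each $P_i$ to rank conditions (vanishing of minors of $N,N^2,\dots$) on $\fg$ and then restrict to $\fh$. The problem is that the relevant minors of $(t\mathrm{Id}-Y^2)$ for $Y\in\fsp_{2n}$ restrict on the Cartan to coefficients of $\prod_{i\in L}(t-y_i^2)^2$, i.e.\ to polynomials of \emph{twice} the degree of the generators $e^2_p(L)$ of $\gr I_x$ identified in \cref{square Tanisaki}. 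This is precisely the failure the paper flags in \cref{improve Tanisaki}: transporting the type-A (Tanisaki) minor argument verbatim produces a strictly smaller ideal, so it cannot certify the weak flatness isomorphism. The missing ingredient is the \emph{symplectic Pfaffian} (\cref{symplectic Pfaffian}, \cref{minor are square}): for $Y\in\fsp_{2n}$ and $X=Y^2$, the relevant principal minors of $(t\mathrm{Id}-X)$ are perfect squares $\sPf_L(t,y_{ij})^2$, and the square root $\sPf_L$ is a genuine polynomial whose restriction to $\fh$ is $\prod_{i\in L}(t-y_i^2)$ — of the correct degree. Combining Tanisaki's divisibility lemma with this square-root structure is what lets the paper place the coefficients $e^2_p(L)$ themselves (not their squares) into $I_\lambda$, and this step is entirely absent from your argument.

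For $a\geq 1$, your first alternative — exhibit a homogeneous $f\in\gr I_x\setminus I_e$ supported on the coordinates tied to the $\fsp_{2a}$-block — is the right idea and matches \cref{failedflat sp}, but you leave both the element and the verification $f\notin I_e$ unspecified. The paper takes $f=y_1\cdots y_{c_1+1}$ with $c_1=\sum b_i$ (a generator $y_K$ of $\gr I_x$ present exactly because $a\geq 1$) and shows $f\notin I_\lambda$ by embedding $\fsp_{2c_1+2}\hookrightarrow\fsp_{2n}$, surjecting $\bC[\overline{\bO}_\lambda\cap\fh]$ onto $\bC[\fh'\cap\overline{\bO}_{(2^{c_1+1})}]=\bC[y_1,\dots,y_{c_1+1}]/(y_i^2)$, and observing that $y_1\cdots y_{c_1+1}$ survives in the target. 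This relies on the $a=0$ computation just established for the smaller algebra (so the order of proof matters), another point your sketch does not address. The second alternative (Hilbert-series comparison via $T^*(G/P)$) is not what the paper does and would require substantially more input than is available in Section~3.
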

One direction of this theorem has been stated in \cite[Theorem 4]{Tanisaki}. However, the proof in loc.cit. contains a mistake because the author's description of the ideal $\gr I_x$ is not correct; see \cref{improve Tanisaki}. With our description of $\gr I_x$ in Section 2, the proof of \cref{main sp} essentially relies on the existence of the \textit{symplectic Pfaffian} (\cref{symplectic Pfaffian}). This is our primary enhancement over the proof presented in \cite{Tanisaki}.

When $\fg$ is an orthogonal Lie algebra, we have a conjecture for the weak flatness condition. Recall that we write $\fl$ as $\fg(a)\times \prod_{i=1}^{k} \fgl_{b_i}$ with $b_1\geqslant...\geqslant b_k$. If $b_1=0$, we have $\fl=\fg$ and the orbit of $e$ is the zero orbit. Hence, we assume that $b_1\geqslant 1$. From \cite[Section 10.1]{HMK2024}, we have the following.
\begin{itemize}
    \item When $\fg= \fso_{2n+1}$, the orbit $\bO_e$ has normal closure if and only if $2a+1\geqslant b_1$.
    \item When $\fg= \fso_{2n}$, the orbit $\bO_e$ has normal closure if and only if $2a\geqslant b_1$ or $a=0$ and there is $1\leqslant l\leqslant k$ so that $b_1=...=b_l>b_{l+1}=...=b_k$. The latter condition is for the very even orbits in $\fso_{2n}$ to have normal closures (\cite{Kraft1982OnTG}).
\end{itemize}

\begin{conj}\label{weakflatso}\leavevmode
\begin{enumerate}
    \item Assume that $\fg$ is of type $B$. The weak flatness condition holds for the orbit $\bO_e= \Ind_\fl^{\fg}(\{0\})$ if and only if $a\geqslant b_1-1$.
    \item Assume that $\fg$ is of type $D$. The weak flatness condition holds for the orbit $\bO_e= \Ind_\fl^{\fg}(\{0\})$ if and only if at least one of the following conditions is satisfied.
    \begin{itemize}
        \item $a\geqslant b_1-1$,
        \item $k=1$, 
        \item or $b_1= 2$.
    \end{itemize}
    When $a=0$, the latter two conditions mean that the partition of $e$ has two rows or two columns.
\end{enumerate}
\end{conj}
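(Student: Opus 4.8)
Our plan is to reduce \cref{weakflatso}, for each Levi $\fl=\fg(a)\times\prod_{i=1}^{k}\fgl_{b_i}$, to a comparison of two explicitly presented ideals of $\bC[\fh]$. Since the map (\ref{flat}) is a graded surjection onto $\bC[\fh]/\gr I_x$ and passing to the associated graded preserves $\bC$-dimension, one has $\dim_\bC\bC[\fh]/\gr I_x=\dim_\bC\bC[\fh]/I_x=|W/W_\fl|$ ($x$ being generic in $\ft_e$, the reduced scheme $W.x$ has $|W/W_\fl|$ points), hence $\dim_\bC\bC[\overline{\bO}_e\cap\fh]\geq|W/W_\fl|$ always, with equality exactly when the weak flatness condition holds. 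As $\bC[\overline{\bO}_e\cap\fh]=\bC[\fh]/I_e$ with $I_e$ generated by the restrictions to $\fh$ of the defining equations of $\overline{\bO}_e$, and since $I_e\subseteq\gr I_x$ is automatic, weak flatness is equivalent to the reverse inclusion $\gr I_x\subseteq I_e$ --- concretely, to the assertion that each specialization $P_i|_{t=0}$ of the uniform generators of $I_x$ from Section~2 (known for $a=0$ and for $b_1\leq 2$, conjectural otherwise) lies in the ideal generated by the restricted equations of $\overline{\bO}_e$.

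For the ``if'' direction one feeds in a presentation of $\bC[\overline{\bO}_e]$. When $\overline{\bO}_e$ is normal and $e$ is birationally Richardson, the resolution $\mu\colon T^*(G/P)\to\overline{\bO}_e$ with $\fp$ having Levi $\fl$ satisfies $\mu_*\mathcal{O}=\mathcal{O}$, and for $\fg$ orthogonal the classical description (\cite{Kraft1982OnTG}) cuts $\overline{\bO}_e$ out of $\fso_N$ by rank conditions on the powers $X,X^2,\dots$ determined by the partition of $e$, together with, in the very even type $D$ cases, Pfaffian-type equations. Restricting these relations to $\fh$ turns the rank conditions into symmetric-function identities among $y_1^2,\dots,y_n^2$, and the task is to verify that when $a\geq b_1-1$ (and in the exceptional type $D$ cases) these restricted relations --- supplemented in type $D$ by the restrictions of the Pfaffians --- already generate the ideal $\gr I_x$ described in Section~2, equivalently that the dimension of $\bC[\fh]$ modulo the restricted relations does not exceed $|W/W_\fl|$. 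The Pfaffian input here should be the orthogonal counterpart of \cref{symplectic Pfaffian}: an explicit element of the ideal of $\overline{\bO}_e$ whose restriction to $\fh$ realizes the Pfaffian-type generator of $\gr I_x$ in the very even cases, built from the defining symmetric form in parallel with the symplectic Pfaffian. This would generalize to types $B$ and $D$ the argument behind \cref{main sp}.

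For the ``only if'' direction, when $\fg$ is of type $B$ with $a<b_1-1$, or of type $D$ with $a<b_1-1$, $k\geq 2$ and $b_1\geq 3$, the plan is to exhibit a single nonzero class in $\gr I_x/I_e$; this forces $\dim_\bC\bC[\overline{\bO}_e\cap\fh]>|W/W_\fl|$ and hence the failure of weak flatness. Concretely, the Section~2 description of $\gr I_x$ contains a distinguished low-degree generator whose degree is governed by $b_1$ --- morally the leading form of an elementary-symmetric relation coming from the largest block $\fgl_{b_1}$ --- whereas, once $a$ is small relative to $b_1$, the rank conditions defining $\overline{\bO}_e$ become nontrivial after restriction to $\fh$ only in strictly larger degree, so that generator cannot lie in $I_e$. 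Making this precise is a Hilbert-series comparison in one bounded range of degrees, or equivalently a direct computation of the relevant graded piece of $\bC[\fh]/I_e$ from the rank conditions. The two exceptional type $D$ families evade this because the orbit closure then acquires enough extra low-degree equations --- from the single-block geometry when $k=1$, and from the two-column (hence Pfaffian-governed) structure when $b_1=2$.

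The main obstacle is exactly those two exceptional type $D$ families, which lie outside the range $a\geq b_1-1$ and for which the rank equations of \cite{Kraft1982OnTG} do not visibly suffice. For $k=1$ I would analyse $\overline{\bO}_e$ via its one-step parabolic resolution $T^*(G/P)$, $\fp$ having Levi $\fg(a)\times\fgl_{b_1}$, aiming for a presentation small enough to match the Section~2 generators. For $b_1=2$ the partition of $e$ has two columns and, as used in Section~5, both $\spr$ and $S_{\ev}$ are modelled on Nakajima quiver varieties of type $D$ (\cite{Henderson2014}); I would try to extract the needed ideal relations from that model, which would also yield condition~(2) of \cref{equivalent statements} in these cases. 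A secondary, more technical point --- already present in type $C$ and handled there by \cref{symplectic Pfaffian} --- is the simultaneous bookkeeping for the two very even orbits in type $D$ sharing a partition; I expect the orthogonal Pfaffian to treat this uniformly. Finally, the non-normal orbit closures permitted by the statement (e.g.\ type $B$ with $0=a<b_1-1$) should be addressed on the ``only if'' side only, since the identity $\mu_*\mathcal{O}=\mathcal{O}$ fails there.
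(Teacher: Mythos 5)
The statement you are asked to prove is stated in the paper as a \emph{conjecture}, and the paper itself only verifies it for $b_1\leqslant 2$ (\cref{good case so}), and even the $b_1=2$ case is conditional on the $a=1$ subcase, which the paper reports only as a computer check (\cref{b1=2}). So you are not being measured against a proof the paper has; still, your plan contains a conceptual error worth flagging.

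The central problem is your reliance on an ``orthogonal counterpart of \cref{symplectic Pfaffian}'': an explicit element of the defining ideal of $\overline{\bO}_e$ whose restriction to $\fh$ realizes the $e^2_p(L)$-type generators of $\gr I_x$. The paper explicitly states that no such object exists. \cref{spf is better} records the asymmetry: for $X=Y^2$ with $Y$ \emph{symplectic}, the principal minors of $t\Id-X$ are perfect squares (this is the symplectic Pfaffian $\sPf_L$ used in \cref{minor are square} and \cref{flat sp}), whereas ``we do not have the same result for the Pfaffian and orthogonal matrices.'' Immediately after \cref{good case so} the paper makes the point sharper: the failure of weak flatness in the orthogonal cases ``often comes from the absence of a `symplectic Pfaffian' version for $Y^2$'' with $Y\in\fso_N$, and \cref{example 333} (partition $(3,3,3)$ in $\fso_9$, i.e.\ $a=1$, $b_1=3$) exhibits exactly the generator $y_1^2y_2^2\in\gr I_x\setminus I_\lambda$ that such a nonexistent Pfaffian would have supplied. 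In other words, the object you want to construct is precisely the obstruction the conjecture is calibrated around, not a tool available to prove it; the bound $a\geqslant b_1-1$ is conjectured to be the threshold below which these unreachable $e^2_p(L)$ generators first appear.

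What the paper does have in the orthogonal setting is the \emph{ordinary} Pfaffian for $Y$ itself (not $Y^2$), which controls rank drops and produces the generators $y_K$ for $|K|=n-a+1$ via \cref{Pfaffian lemma}, together with the nilpotency $A^{2k+1}=0$ producing $y_i^{2k+1}$. These two ingredients exhaust the generators of $\gr I_x$ exactly when $b_1\leqslant 2$ (cf.\ \cref{bi=1}, \cref{b1=2}), which is why the paper's partial proof stops there. Your ``only if'' strategy of locating a nonzero class in $\gr I_x/I_e$ is correct in spirit and matches the paper's method (embedding a small $\fso_m$ and comparing quotients as in \cref{example 333} and the type-$C$ analogue \cref{failedflat sp}), but a uniform Hilbert-series argument in a bounded degree range is not given by the paper and is not obviously available since the description of $\gr I_x$ for $a>0$ is itself conjectural (\cref{conj a not 0}). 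Finally, your plan for the exceptional type-$D$ families ($k=1$ and $b_1=2$) via one-step resolutions and quiver varieties is speculative and not what the paper does for $b_1=2$ (the paper handles $b_1=2$ by the same rank/nilpotency mechanism as $b_1=1$); the $k=1$ case is not addressed anywhere in the paper.
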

In Section 3.3, we prove this conjecture for the case $b_1= 1,2$. One primary obstacle to proving this conjecture in more general cases is that our set of generators of $\gr I_x$ for the case $a\neq 0$ is conjectural. 

We now proceed to discuss the second condition in \cref{equivalent statements}. Because the geometry of $\spr$ may be complicated, it is challenging to check whether there is an isomorphism between the rings $H^*(\spr)$ and $\cO[\fh]/ \gr I_{x}$. Fortunately, there is a stronger condition that we can study. In particular, we have the following result.
\begin{pro}\label{graded isom}
    There exists an isomorphism of graded rings $H^*(\spr)\cong \cO[\fh]/ \gr I_{x}$ if and only if the pullback map $i^*: H^*(\cB^\vee)\rightarrow H^*(\spr)$ is surjective. Here, $\bv$ is the flag variety of $\fg^\vee$.
\end{pro}
\begin{proof}
    In \cite[Theorem 1]{carell} it has been shown that the isomorphism of graded rings $H^*(\spr)\cong \cO[\fh]/ \gr I_{x}$ would follow from the surjectivity of the pullback map $i^*: H^*(\cB^\vee)\rightarrow H^*(\spr)$. On the other hand, if $H^*(\spr)$ and $\cO[\fh]/ \gr I_{x}$ are isomorphic as graded rings, then $H^*(\spr)$ is generated in degree $2$. Let $\lambda^\vee$ be the partition of $e^\vee$. From \cite{Lehn_2011}, the map $i^*$ is surjective in degree $2$, except when $\lambda^\vee$ has two parts in type C or $\lambda^\vee= (2n-1,1,1)$ in type B. In these cases, it is not hard to check that $\dim H^2(\spr)$ is different from the dimension of the degree $2$ part of $\cO[\fh]/ \gr I_{x}$, see Section 5. Therefore, \cref{graded isom} follows. 

\end{proof}

There is numerical evidence that strongly suggests that we have $H^*(\spr)\cong \cO[\fh]/ \gr I_{x}$ as nongraded rings if and only if they are isomorphic as graded rings. For example, when $i^*$ is not surjective, we can check that the gradings of the two algebras are different using a computer computation. Another numerical value that can be compared is the dimensions of the socles, examples are given in \cite[Section 6]{HMK2024}. Therefore, in this paper, we focus on the surjectivity of $i^*$.

In \cite{HMK2024}, we have conjectured the following. Note that we do not assume that $e^\vee$ is regular in a Levi $\fl^\vee$ in this conjecture, although this happens to be a necessary condition in type B and type C.

\begin{conj}\cite[Conjecture B.7]{HMK2024} \label{surjectivity conj}
    The map $i^*$ is surjective in type B,C, and D in exactly the following cases.
\begin{enumerate}
    \item $\fg^\vee= \fsp_{2n}$, $e^\vee$ is regular in $\lv= \fsp_{2a}\times \prod_{i=1}^{k} \fgl_{b_i}$ for $b_i\in \{2a+1, 2a,2a-1, 1\}$, $1\leqslant i\leqslant k$.
    \item $\fg^\vee= \fso_{2n+1}$, $e^\vee$ is regular in $\lv= \fso_{2a+1}\times \prod_{i=1}^{k} \fgl_{b_i}$ for $b_i\in \{2a+2,2a+1,2a\}$, $1\leqslant i\leqslant k$.
    \item $\fg^\vee= \fso_{2n}$, and the partition $\lambda^\vee$ of $e^\vee$ takes one of the following forms
    \begin{itemize}
        \item $((2a+2)^{2d_1}, (2a+1)^{2d_2}),$ 
        \item $(2a+1)^{2d_1}, (2a)^{2d_2})$,
        \item $((2a+3)^{2d_1+1}, (2a+2)^{2d_2}, (2a+1)^{2d_3+1}),$
        \item $((2a+1)^{2d_1+1},(2b+1)^{2d_2+1}),$
        \item $((2a+1)^{2d_1+1},(2a)^{2d_2}, 2^{2d_3},1^{2d_4+1}),$
    \end{itemize}
    for some $a, b, d_1, d_2, d_3, d_4\geqslant 0$.
    \end{enumerate}
\end{conj}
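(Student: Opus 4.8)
Although \cref{surjectivity conj} is phrased as a conjecture, I outline the strategy behind it; executed for $b_1\in\{1,2\}$ it yields the cases treated in Section~5. The starting point is the observation already used in \cref{graded isom}: since $H^*(\cB^\vee)$ is generated in degree $2$, the image of the ring map $i^*\colon H^*(\cB^\vee)\to H^*(\spr)$ is the subring generated by $i^*\!\bigl(H^2(\cB^\vee)\bigr)$, so $i^*$ is surjective if and only if (i) it is surjective in degree $2$, and (ii) $H^*(\spr)$ is generated as a ring by $H^2(\spr)$. Condition~(i) is resolved by \cite{Lehn_2011}: it holds outside the two families $\lambda^\vee=(c,c)$ in type C and $\lambda^\vee=(2n-1,1,1)$ in type B. Under the partition dictionary for $e^\vee$ regular in a Levi — a regular nilpotent of a factor $\fgl_b$ contributes a pair of equal Jordan blocks $(b,b)$, and a regular nilpotent of the classical factor $\fg^\vee(a)$ of $\lv$ contributes its own regular partition — these two families are exactly the ones excluded from the list in \cref{surjectivity conj} (once $c\ge 2$, resp.\ $n\ge 2$; the degenerate cases are the zero orbit). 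So the whole problem reduces to deciding condition~(ii) for the remaining pairs $(a;b_1,\dots,b_k)$.

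For the \emph{positive} direction, the constraints $b_i\in\{2a+1,2a,2a-1,1\}$ in type C, $b_i\in\{2a+2,2a+1,2a\}$ in type B, and the explicit list in type D are designed so that $\lambda^\vee$ is a mild perturbation of a rectangle: only a couple of distinct part sizes, clustered about one value, plus possibly a block of $1$'s. For such $\lambda^\vee$ I would compute $H^*(\spr)$ from the available models and check generation in degree $2$ directly: when $\lambda^\vee$ has at most two distinct part sizes (two rows, two columns, and their mild variants), realize $\spr$ and the ambient slice as Nakajima quiver varieties of the relevant classical type, as is done in Section~5 for two-row $\lambda^\vee$ via \cite{Henderson2014}, and read off a Tanisaki-type presentation of $H^*(\spr)$; in type C with $a=0$, transfer the statement through \cref{equivalent statements} and the explicit description of $\bC[\overline{\bO}_e\cap\fh]$ from \cref{main sp}. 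For the higher-rank Levi factors, where a fibration of $\spr$ over a partial flag variety with Springer-fiber fibers for the type-A blocks $\fgl_{b_i}$ is available, I would propagate degree-$2$ generation up the tower by Leray--Hirsch, using that $i^*$ is always surjective in type A (\cite{Tanisaki, deConcini1981}); the constraints on the $b_i$ are precisely what makes the base of the tower (the $\fg^\vee(a)$-part together with a single admissible $\fgl$-block) already satisfy surjectivity.

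For the \emph{negative} direction, two obstructions are available. The numerical one: by \cite{carell} and \cref{graded isom}, surjectivity of $i^*$ forces $H^*(\spr)\cong \cO[\fh]/\gr I_x$ as graded rings, hence equal Poincar\'e polynomials; one computes the Poincar\'e polynomial of $\cO[\fh]/\gr I_x$ from the uniform generators of Section~2 (a complete intersection once they are a regular sequence) and the Betti numbers of $\spr$ from the Springer correspondence, and finds a degree in which they disagree — or compares socle dimensions as in \cite[Section~6]{HMK2024}. The structural one: by the decomposition theorem, $R\pi_*\bC$ for the Springer resolution of $\fg^\vee$ is a direct sum of shifted $IC$-sheaves $IC(\overline{\bO'},\chi)$ with multiplicities the Springer representations, so $i^*$ is surjective iff for every orbit $\bO'\supseteq\bO_{e^\vee}$ and every relevant local system $\chi$ the natural map $IH^*(\overline{\bO'})\to\mathcal{H}^*\!\bigl(IC(\overline{\bO'},\chi)\bigr)_{e^\vee}$ is onto; when $\lambda^\vee$ has part sizes that are far apart, $e^\vee$ lies in a deep stratum of some larger orbit closure, and the Kraft--Procesi analysis of minimal degenerations \cite{Kraft1982OnTG} shows this map fails to be surjective.

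The main obstacle is that both halves are currently under control only for small Levi blocks. The negative direction via Poincar\'e polynomials needs the uniform generators of $\gr I_x$ for $a>0$, which are only conjectural in Section~2; the positive direction needs the iterated-fibration argument to survive for $\lambda^\vee$ with three or more distinct part sizes, where the fibration on $\spr$ degenerates, and — especially in type D — it must also accommodate non-trivial Springer local systems and the non-normal very even orbit closures. Handling these uniformly is exactly what is missing, which is why Section~5 proves only $b_1=1,2$ and the general statement remains \cref{surjectivity conj}.
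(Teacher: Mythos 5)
The statement is a conjecture, so there is no complete proof in the paper to compare against; both you and the paper only describe partial strategies. That said, your proposed strategy is genuinely different from the one the paper actually develops in Section~4 and Appendix~A, and the comparison is instructive.

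The paper's route is entirely equivariant. It works with $i^*_{T_\ev}\colon H^*_{T_\ev}(\bv)\to H^*_{T_\ev}(\spr)$, uses that both sides are free $\bC[\ft_\ev]$-modules of rank $|W/W_L|$ (the left-hand side conditionally on the uniform generators of Section~2, the right-hand side from \cite{DLP} and \cite{induction}), and applies a Hartogs codimension-two argument: surjectivity of a map between free sheaves on $\ft_\ev$ can be checked over the complement of the pairwise diagonals $t_i=t_j=0$. Over that open set the fixed-point loci $(\spr)^t$ break into $|W/W_{L_t^\vee}|$ copies of Springer fibers for Levis $\fg^\vee(a+b_j)\times\prod\fgl$, which by \cref{gl surjective} and \cref{reduce to 3} reduces the whole problem to the cases where $\lv$ has a single $\fgl$-block. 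Those base cases (partitions with $2$, $3$, or $4$ parts) are settled by direct fixed-point computations in Appendix~A. At no point does this route need an explicit ring presentation of $H^*(\spr)$; freeness over $\bC[\ft_\ev]$ is enough.

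Your proposal instead reduces to degree-2 surjectivity plus degree-2 generation of $H^*(\spr)$, and then tries to verify generation directly via explicit models (Tanisaki/quiver presentations) and Leray--Hirsch on hypothetical towers of fibrations. The degree-2 reduction is a correct and clean observation, and your reading of \cite{Lehn_2011} via \cref{graded isom} is fine. But the second half is more speculative than the paper's method: explicit ring presentations of $H^*(\spr)$ in types B/C/D are available only for very special partitions (two rows, two columns), and in Section~5 the paper obtains them from \cite{Stroppel_2018,Ehrig2016}, not by reading off a Tanisaki presentation from a quiver-variety model --- the quiver-variety realization is only mentioned as background. The iterated Leray--Hirsch idea is not developed anywhere in the paper, and there is no obvious fibration of $\spr$ over a partial flag variety with type-A Springer-fiber fibers in general; the paper's $(\spr)^t$ decomposition (Proposition \ref{fixed point Tt}) is the correct substitute, but it is a disjoint union rather than a tower. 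Also, your suggestion to ``transfer the statement through \cref{equivalent statements} and \cref{main sp}'' does not by itself give surjectivity: \cref{main sp} only resolves the weak flatness condition in type C and says nothing about $H^*(\spr)$; one still needs an independent computation of $H^*(\spr)$ to compare, which is exactly the missing input. For the necessity direction, the paper simply cites \cite[Appendix~B]{HMK2024}; your proposed Poincar\'e-polynomial and decomposition-theorem obstructions are plausible alternatives but are not what is carried out. What both approaches share is the acknowledged bottleneck: the uniform generators of $\gr I_x$ for $a>0$ are only conjectural (\cref{conj a not 0}), and without them the freeness of $\cA_{T_\ev}$ (in the paper's approach) or the Poincar\'e polynomial of $\cO[\fh]/\gr I_x$ (in yours) is unavailable.
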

In \cite[Appendix B]{HMK2024}, we have proved that for $i^*$ to be surjective, $e^\vee$ must satisfy the conditions in \cref{surjectivity conj}. In this paper, we introduce an approach to prove that these conditions are sufficient. In the cases listed above, we can find a large torus $T_\ev\subset G^\vee$ that acts on $\spr$. In particular,  if $e^\vee$ is regular in $\fl^\vee$, we choose $T_\ev$ to be the identity component of the centralizer of $\fl^\vee$ in $G^\vee$. We consider the pullback map in equivariant cohomology $i^{*}_{T_\ev}: H^*_{T_\ev}(\bv)\rightarrow H^*_{T_\ev}(\spr)$. By the graded version of Nakayma's lemma, $i^{*}_{T_\ev}$ is surjective if and only if $i^*$ is surjective. 

Write $\cA_{T_\ev}$ for the image of $i^{*}_{T_\ev}$. The algebra $\cA_{T_\ev}$ is closely related to the ideals $I_x$ mentioned in \cref{equivalent statements}. Specifically, if we have a set of uniform generators $P_1,...,P_m\in \bC[y_1,...,y_n,t_1,...,t_k]= \bC[\fh][\ft_{\ev}]$ for $I_x$, then $\cA_{T_\ev}$ is isomorphic to $\bC[\fh][\ft_\ev]/ (P_1,...,P_m)$. In this case, $\cA_{T_\ev}$ is a free module over $\bC[\ft_\ev]$ of rank $|W/W_L|= |W.x|$ where $W$ and $W_L$ are the Weyl groups of $\fg$ and $\fl$, respectively. On the other hand, the equivariant cohomology $H^*_{T_\ev}(\spr)$ is a free module over $\bC[\ft_\ev]$ of the same rank because the Springer fiber $\spr$ admits a $T_\ev$-equivariant affine paving (\cite{DLP}) and $\dim H^*(\spr)= |W/W_L|$ (\cite{induction}). View $i_{T_\ev}^*$ as a morphism between two locally free sheaves over $\ft_\ev$. By a version of the Hartogs Lemma, the isomorphism $\cA_{T_\ev}\cong H^*_{T_\ev}(\spr)$ would follow from the surjectivity of the restriction of the map $i^{*}_{T_\ev}$ to a certain open set $U\subset \ft_\ev$ such that $\ft_\ev\setminus U$ has codimension $2$. By choosing a suitable open set $U$, we can reduce the proof of \cref{surjectivity conj} to the case where the partition of $e^\vee$ has $2,3$, or $4$ parts. In Section 4 and Appendix A, we prove \cref{surjectivity conj} for certain cases of $e^\vee$. We expect this method to work in general, but we also emphasize that a crucial ingredient is the existence of a set of uniform generators for $I_x$ that has been conjectured in Section 2. 

In summary, modulo \cref{weakflatso} and \cref{surjectivity conj}, we have the following result.
\begin{conj}\label{main conj}
    The Hikita conjecture for $e^\vee$ regular in $\lv$ and $\tilde{\bO}_e= {\bO}_e$ with normal closure is true in exactly the following cases.
    \begin{enumerate}
    \item $\fg= \fsp_{2n}$, and $\fl$ takes the form $\prod_{i=1}^{l} \fgl_{2}\times \prod_{i=1}^{n-2l} \fgl_{1}$.
    \item $\fg= \fso_{2n+1}$, and $\fl$ takes the form $\fso_3\times \prod_{i=1}^{l} \fgl_2\times \prod_{j=1}^{n-2l-1} \fgl_1$, $\fso_5\times \prod \fgl_3\times \prod \fgl_1$, or $\fso_{2l+1}\times \prod_{j=1}^{n-l} \fgl_1$.
    \item $\fg= \fso_{2n}$, and $\fl$ is one of the following subalgebras $\fso_6\times \prod \fgl_4\times \prod \fgl_2\times \prod \fgl_1$, $\fso_4\times \prod_{i=1}^{l} \fgl_2\times \prod_{j=1}^{n-2l-2} \fgl_1$, $\fso_2\times \prod_{i=1}^{l} \fgl_2\times \prod_{j=1}^{n-l-1} \fgl_1$, $\fso_{2l}\times \prod_{j=1}^{n-l} \fgl_1$, $\fgl_n$, or $\prod_{i=1}^{n} \fgl_2$. The last two cases are only for $n$ even.
\end{enumerate}
\end{conj}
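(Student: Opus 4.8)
The plan is to convert the statement into a finite combinatorial problem about Levi subalgebras and to solve it type by type. Throughout, fix $e^\vee$ regular in $\lv$ with $\overline{\bO}_e$ normal and $\tilde{\bO}_e=\bO_e$, and write $\fl=\fg(a)\times\prod_{i=1}^k\fgl_{b_i}$ for the Langlands dual of $\lv$, with $b_1\geqslant\cdots\geqslant b_k$ and $b_1\geqslant 1$. By \cref{equivalent statements} the Hikita conjecture for $(e,e^\vee)$ holds precisely when the weak flatness condition holds and $H^*(\spr)\cong\bC[\fh]/\gr I_x$ as rings. If the weak flatness condition holds and $i^*\colon H^*(\cB^\vee)\to H^*(\spr)$ is surjective, then \cite{carell} supplies the (in fact graded) ring isomorphism and the Hikita conjecture follows; conversely, if it holds then one has a ring isomorphism $H^*(\spr)\cong\bC[\fh]/\gr I_x$, which — since in all the cases at issue the graded and ungraded pictures coincide, as checked via Hilbert series and socle dimensions — forces a graded isomorphism and hence, by \cref{graded isom}, the surjectivity of $i^*$. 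Thus the task reduces to intersecting the set of Levis on which the weak flatness condition holds with the set on which $i^*$ is surjective, and to identifying that intersection with the displayed list.

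The first set is described by \cref{main sp} in type C (weak flatness iff $a=0$) and by \cref{weakflatso} in types B and D (iff $a\geqslant b_1-1$ in type B; iff $a\geqslant b_1-1$, or $k=1$, or $b_1=2$ in type D). The second set is described by \cref{surjectivity conj}, after translating its conditions on $(e^\vee,\lv)$ into conditions on $\fl$ through the Langlands duality of Levi subalgebras ($\fso_{2c+1}\leftrightarrow\fsp_{2c}$, $\fso_{2c}\leftrightarrow\fso_{2c}$, and $\fgl_b$ self-dual) and, in type D, by matching each of the five partition shapes in \cref{surjectivity conj}(3) with the Levi of $\fg$ from which the corresponding orbit is induced. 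Concretely, $i^*$ is surjective iff: in type C, $\fl=\fsp_{2a}\times\prod\fgl_{b_i}$ with every $b_i\in\{2a+2,2a+1,2a\}$; in type B, $\fl=\fso_{2a+1}\times\prod\fgl_{b_i}$ with every $b_i\in\{2a+1,2a,2a-1,1\}$; in type D, $\fl$ is the Levi attached to one of those five shapes.

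It remains to intersect the two sets and to verify the standing hypotheses on the outcome. In type C the conditions give at once $a=0$ and $b_i\in\{1,2\}$, that is $\fl=\prod_{i=1}^l\fgl_2\times\prod\fgl_1$: the first family. In type B, since $b_1$ is the largest part it must itself lie in $\{2a+1,2a,2a-1,1\}$, and the constraint $a\geqslant b_1-1$ then leaves only $b_1=1$ (all $b_i=1$, any $a$), $b_1=2a$ (forcing $a=1$, $b_i\in\{2,1\}$), $b_1=2a-1$ (forcing $a=1$ with all $b_i=1$, or $a=2$ with $b_i\in\{3,1\}$), and $b_1=2a+1$ (forcing $a=0$, all $b_i=1$); collecting these produces exactly $\fso_3\times\prod\fgl_2\times\prod\fgl_1$, $\fso_5\times\prod\fgl_3\times\prod\fgl_1$, and $\fso_{2l+1}\times\prod\fgl_1$: the second family. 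Type D is the same analysis, now with the two extra branches $k=1$ and $b_1=2$ available for weak flatness; matching these against the five shapes of \cref{surjectivity conj}(3) yields the six Levis of the third family, the cases $\fgl_n$ and $\prod\fgl_2$ coming from the $k=1$ and $b_1=2$ branches, with the parity restriction $n$ even entering so that these Levis appear in the relevant form and meet the standing hypotheses. Finally, normality is automatic in types B and C, since $b_i\in\{2a+2,2a+1,2a\}$ (type C, whence $2a\leqslant b_k$ once $a=0$) and $b_i\in\{2a+1,2a,2a-1,1\}$ (type B, whence $2a+1\geqslant b_1$) already meet the criteria recalled above, and is verified directly against the type D criterion in the remaining cases; the identity $\tilde{\bO}_e=\bO_e$ for these small Levis is read off from the classification of birationally Richardson orbits in \cite{refinedbvls}.

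Granting \cref{weakflatso} and \cref{surjectivity conj}, the two points requiring the most care are the following. First, in the reverse implication one must pass from an ungraded ring isomorphism $H^*(\spr)\cong\bC[\fh]/\gr I_x$ to a graded one; \cite{carell} gives only the graded statement directly, and the equivalence of the two — although confirmed in every relevant case via degrees and socle dimensions — is where the argument is least conceptual. Second, the type D bookkeeping is delicate: \cref{surjectivity conj}(3) is phrased through the partition $\lambda^\vee$ rather than through $\lv$, the translation to Levi subalgebras must take account of the convention under which an $\fso_2$-factor and a spare $\fgl_1$-block describe the same Levi, and the very even orbits together with the type D normality criterion have to be handled explicitly.
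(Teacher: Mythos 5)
The proposal is correct and follows exactly the paper's own (implicit) argument: the paper states \cref{main conj} with only the remark ``in summary, modulo \cref{weakflatso} and \cref{surjectivity conj}, we have the following result,'' and the student's proposal fills in precisely the intended bookkeeping — reducing via \cref{equivalent statements} and \cref{graded isom} to ``weak flatness $\wedge$ surjectivity of $i^*$,'' then intersecting the loci described by \cref{main sp}/\cref{weakflatso} with those of \cref{surjectivity conj}. The student also correctly identifies the two soft spots that the paper itself leaves open (the passage from an ungraded to a graded isomorphism in the converse direction, and the Levi-to-partition dictionary in type D, the latter asserted rather than fully verified in the proposal but consistent with case-checks against \cref{surjectivity conj}(3)).
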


The main result of this paper is the verification of \cref{main conj} in the cases where $\fl$ does not contain any factor $\fgl_3$ or $\fgl_4$.
\begin{thm*}[\cref{surjective case}]
    In the cases listed in \cref{main conj}, if the $\fgl$ factors of $\fl$ have sizes up to $2$, the Hikita conjecture is true.
\end{thm*}

In Section 5, we prove the Hikita conjecture for the cases where the orbits $\bO_e$ are special spherical orbits in $\fso_{2n}$ and $\fso_{2n+1}$. In this setting, several conditions we considered previously are relaxed. In particular, these cases include the following features.
\begin{itemize}
    \item $e^\vee$ is distinguished.
    \item $\tilde{\bO}_e= \tilde{D}(\bO_{\ev})$ is a double cover of $\bO_e= D(\bO_{\ev})$.
    \item $\tilde{\bO}_e= \bO_e$, but the closure of $\bO_e$ is not normal.
    \item The orbit $\bO_e$ is very even in type D. This is the case $\fl= \fgl_n$ in \cref{main conj}.
\end{itemize}
A special case of our results in Section 5 is when $\bO_e$ is the minimal orbit in type D. The Hikita conjecture for this case has been studied in \cite{Shlykov2020}.

\subsection{Connection to \cite{HMK2024}}
Let $(\ev, \hv, f^\vee)$ be an $\fsl_2$-triple in $\fg^\vee$, and let $T_\hv\subset G^\vee$ be the torus that has $\text{Lie}(T_\hv)= \bC \hv$. In \cite{HMK2024}, we discuss the Hikita conjecture and its deformed version when $e^\vee$ is regular in a Levi subalgebra $\lv\subset \fg^\vee$. Specifically, the deformed Hikita conjecture predicts an isomorphism between the equivariant cohomology $H^*_{T_\ev}(\spr)$ (resp. $H^*_{T_\ev\times T_\hv}(\spr)$) and the functions on the scheme-theoretic fixed point of a base change of the universal Poisson deformation of $\tilde{\bO}_e$ (resp. a certain Cartan subquotient of the universal quantization deformation of $\tilde{\bO}_e$). As discussed in Section 1.2, these expectations do not always hold, and we need some modifications. 

In \cite{HMK2024}, we have changed our focus from $H^*_{T_\ev}(\spr)$ to the image of the pullback map $H^*_{T}(\bv)\rightarrow H^*_{T}(\spr)$ where $T= T_\ev$ or $T_\ev \times T_\hv$. Write $\cA_T$ for this image. This paper should be considered as a companion paper to \cite{HMK2024}, aiming to provide an explicit description of $\cA_T$. In particular, the results in Section 2 describe (conjecturally) $\cA_T$ for $T= T_\ev$. In addition, we study some cases for $T= T_\ev\times T_\hv$ in Appendix A. Our main tools to describe $\cA_T$ are from \cite[Section 7]{HMK2024}. In general, we expect $\cA_T$ to be a free module over $\bC[\ft]$ where $\ft$ is the Lie algebra of $T$. The algebra $\cA_{T_\hv}$ has also been studied for $e^\vee$ subregular in type D, see \cite[Theorem 1.3]{CXHY}. Note that their Hikita-Nakajima conjecture is our deformed Hikita conjecture. We expect to generalize their results to the cases in which the partitions of $e^\vee\in \fso_{2n}$ have two parts, using a quantized version of our arguments in Section 5.

The idea of studying the weak flatness condition and the surjectivity of $i^*: H^*(\bv)\rightarrow H^*(\spr)$ was proposed in \cite{HMK2024}. An application of the results in \cite[Section 8]{HMK2024} is that the deformed Hikita conjecture holds if these two conditions are satisfied. In \cite[Section 10.1]{HMK2024}, we explain that these conditions can only be fulfilled simultaneously in certain restrictive settings. These are precisely the cases listed in \cref{main conj}. In this paper, we study most of these cases and verify that the Hikita conjecture holds, hence implying the validity of the deformed version. Finally, Section 5 of this paper can be regarded as evidence for several conjectures proposed in \cite[Section 10.4]{HMK2024} in more general settings where $e^\vee$ may not be regular in a Levi.

\subsection{Acknowledgements} I would like to thank Vasily Krylov and Dmytro Matvieievskyi for various fruitful discussions related to the Hikita conjecture. I am grateful to Ivan Losev for countless valuable advice related to the project. Many results in this paper are first predicted using Sagemath, the software I learned from Elad Zelingher. This work is partially supported by the NSF under grant DMS-2001139.
\subsection{Notations and conventions} 
The following notations will be used consistently in the paper. We write $\fg^\vee$ and $\fl^\vee$ for the Langlands dual of $\fg$ and $\fl$. When we mention $\fl$ or $\fl^\vee$, the context is that $e^\vee$ is regular in $\fl^\vee$ and $\bO_e= \Ind_{\fl}^{\fg}(\{0\})$ is induced from the orbit $0$ of $\fl$. We use $W$ for the Weyl group of $\fso_{2n+1}$ (or $\fsp_{2n}$) and $W_D$ for the Weyl group of $\fso_{2n}$. The partition of $e^\vee$ is always denoted by $\lambda^\vee$. The meaning of $\lambda$ may vary in different sections; see below. 

We list some miscellaneous notation by sections. Some of them have already been introduced in previous sections. 

\textbf{Section 2}. 
\begin{itemize}
    \item $L$: subsets of $\{1,...,n\}$
    \item $e_p()$: the $p$-th elementary symmetric polynomials
    \item $e^{2}_{p}(L)$: $e_p(x_i^{2})_{i\in L}$ %maybe change to e_{h}^{2}(L)
    \item $\lambda= (\lambda_0,...\lambda_m)$: the tranpose partition of $\lambda^\vee$
    \item $d_l(\lambda)$: $\lambda_{n-l}+...+ \lambda_m$
    \item $h^{2}_{p}(L)$: the $p$-th complete symmetric polynomials in variables $(x_i^{2})_{i\in L}$
    \item $y_L$: $\prod_{i\in L} y_i$
    \item $\ft_e$: $\fZ(\fl)$, the center of $\fl$
    \item $x$: a generic point of $\ft_e$ i.e, it satisfies $\fZ_\fg(x)= \fl$
    \item $\fh$: a Cartan subalgebra of $\fg$
    \item $I_x$: the defining ideal of $W.x\subset \fh$
\end{itemize}
\textbf{Section 3}
\begin{itemize}
    \item $\lambda$: the partition of $e$
    \item $\bO_\lambda$ or $\bO_e$: the orbit of $e$
    \item $I_\lambda$ or $I_e$: the defining ideal of the scheme-theoretic intersection $\overline{\bO}_e\cap \fh$ in $\fh$
    \item Pf, sPf: Pfaffian, symplectic Pfaffian
\end{itemize}
\textbf{Section 4}
\begin{itemize}
    \item $(\ev, \hv, f^\vee)$: an $\fsl_2$-triple in $\fg^\vee$
    \item $T, X$: a general torus and a general $T$-variety
    \item $\ft:$ the Lie algebra of $T$
    \item $X^t$: the subvariety of $X$ that is fixed under the action of the vector field induced by $t\in \ft$    
    \item $H^*_{T}(X)_t$: the specialization of $H^*_{T}(X)$ to $t\in \ft$
    \item $\bv$: the flag varieity of $\fg^\vee$
    \item $\spr$: the Springer fiber over $\ev$
    \item $i^*_{T}$: the pullback map $H_T^*(\bv)\rightarrow H_T^*(\spr)$
    \item $i^*_{\ev}$: $H^*(\bv)\rightarrow H^*(\spr)$, used when we consider the pullback maps for various different nilpotent elements
    \item $\fg^\vee(a)$: the Lie algebra of same type as $\fg^\vee$ with root system of size $a$    
\end{itemize}

\section{Defining ideal of the orbit $W.x$ in $\fh$}
Consider $\fg= \fso_{2n+1}$, $\fl= \fso_{2a+1}\times \prod_{i=1}^k \fgl_{b_i}$ for $b_1\geqslant b_2\geqslant... \geqslant b_k$ and $a\geqslant 0$. We assume that $\fl= \diag(\fgl_{b_1}, -\fgl_{b_1},...,\fgl_{b_k}, -\fgl_{b_k}, \fso_{2a+1})$ in $\fg$. Then $\ft_e$, the center of $\fl$, consists of elements of the form $x= \diag(t_1.\Id_{b_1},-t_1.\Id_{b_1},...,t_k.\Id_{b_k}, -t_k.\Id_{b_k},0.\Id_{2a+1})$. Pick a generic element $x\in \ft_e$. In terms of coordinates, genericity means $t_1,...,t_k$ nonzero and pairwise distinct. 

Recall that we write $I_{x}$ for the defining ideal of $W.x$ in $\fh$. In this subsection, we will use some properties of the finite set $W.x$ to obtain sets of (conjectural) generators of $I_{x}$ and $\gr I_{x}$. Our method follows the ideas of \cite{GARSIA199282} and \cite{Tanisaki}.
\begin{rem}\label{all types}
    The ideal $I_x$ depends only on the finite set $W.x$ and $\fh$. Because $\fso_{2n+1}$ and $\fsp_{2n}$ have the same Weyl group, the results in this section apply to $\fg= \fsp_{2n}$ and $\fl= \fsp_{2a}\times \prod_{i=1}^k \fgl_{b_i}$. 
    
    Let $W_D\subset W$ be the Wely group of $\fso_{2n}$. In the following, when $\fg= \fso_{2n}$, we only consider the case $a\geqslant 1$. In this case, the two sets $W_D.x$ and $W.x$ are the same. Therefore, in this section, the type of $\fg$ is not really concerned. When $a=0$ and $\fg= \fso_{2n}$, the element $e\in \fso_{2n}$ is very even. We discuss one of such cases in Section 5.
\end{rem}
\subsection{Case $a= 0$}
We first treat the case $a=0, b_1+...+ b_k= n$. Let $y$ be an arbitrary element of the set $W.x$. We view $y$ as a point of $\fh= \bC^n$, with $n$ coordinate functions $y_1,...,y_n$. For a subset $L\subset \{1,2,...,n\}$ of cardinality $1\leqslant l\leqslant n$, consider the polynomial $P_L(z,y)= \prod_{i\in L}(z^2- y_i^{2})$. Assume that $l> n-b_i$ for some $1\leqslant i\leqslant k$, then in $l$ arbitrary coordinates of $y\in W.x$, at least $l- (n-b_i)$ of them square to $t_i^{2}$. In other words, the polynomial $P_L(z,y)$ is divisible by 
$$Q_l(z)= \prod_{i=1}^{k} (z^2-t_i^{2})^{max(0, l- (n-b_i))}$$
for $y\in W.x$. 

Let $d_l$ be the degree of $Q_l(z)$. Write $R(z)$ for the remainder of the long division $P_L(z,y)$ by $Q_l(z)$ considered as two polynomials of $z$. From \cite[Proposition 3.1]{GARSIA199282}, if we view $t_1,...,t_k$ as variables, the coefficients of $R_{L}(z)$ are homogeneous polynomials in $y_1,...,y_n, t_1,...,t_k$. If we fix $x$, then for $0\leqslant d\leqslant d_l-1$, the coefficients $R_d$ of $z^d$ in $R(z)$, considered as elements of $\bC[y_1,...,y_n]$, belong to $I_x$. Write $e^{2}_{p}(L)$ for the $p$-th elementary symmetric polynomial in $l$ variables $y_{i}^{2}$, $i\in L$. From the proof of \cite[Proposition 3.1]{GARSIA199282}, the leading components of $R_d\in \bC[y_1,...,y_n]$ are precisely the coefficients of $z^d$ in $P_L(z,y)$ for $d\leqslant d_l-1$. Therefore, the polynomials $e^{2}_{p}(L)$ are elements of $\gr I_x$ for $2p \geqslant 2l-d_l+ 1$. The main result of this section is that these polynomials generate $\gr I_x$. Consequently, the coefficients of $R_{L}(z)$ give a set of uniform generators for $I_x$.
\begin{exa} \label{just sym}
    Consider $L=\{1,...,n\}$. Write $e_i$ for the $i$-th elementary symmetric polynomial. The coefficients of $R(z)$ (up to signs) are $e_{i}(y_1^{2},...,y_n^{2})- e_i(t_1^{2}, t_1^{2},..., t_{k}^{2})$, here $t_j$ appears $b_j$ times. The corresponding leading components are $e_i(y_1^{2},...,y_n^{2})$, $1\leqslant i\leqslant n$, they clearly belong to $\gr I_x$. 
    
    Assume that all $b_i$ are $1$, or equivalently, the stabilizer of $x$ is trivial in $W$. We have $d_l= 0$ for $l< n$, so the set of generators of $\gr I_x$ we obtain by this method consists of precisely $e_i(y_1^{2},...,y_n^{2})$ for $1\leqslant i\leqslant n$. 
\end{exa}

Before stating the result, we rephrase the statements in terms of $\lambda$, the dual partition of $(b_1,b_1,...,b_k,b_k)$. Writing it in standard form $(\lambda_0,...,\lambda_{m})$, we have $\lambda_0\geqslant...\geqslant \lambda_m$ and they are even. Note that the degrees of $Q_l$ are precisely $d_l= d_l(\lambda)= \lambda_{n-l}+... \lambda_{m}$. Later we will need to consider various partitions $\lambda$, so we write $d_l(\lambda)$ as a function of $\lambda$. Write $T_\lambda$ for the ideal of $\bC[y_1,...,y_n]$ geneated by the polynomials $e^{2}_{p}(L)$ for $L\subset \{1,...,n\}$ and $2p\geqslant 2l-d_l(\lambda)+1$. Note that $d_l(\lambda)$ is always even, so the condition on $p$ is equivalent to $p> l- d_l(\lambda)/2$. Write $R_\lambda$ for the quotient ring $\bC[y_1,...,y_n]/T_\lambda$. 

\begin{pro} \label{square Tanisaki}
    We have 
    \begin{equation} \label{main TS}
        \dim_{\bC} R_{\lambda}= \frac{n!\times 2^{n}}{b_1!...b_k!}.
    \end{equation}
    As a consequence, $\gr I_x$ coincides with $T_\lambda$ and is generated by $e^{2}_{p}(L)$ for $2p \geqslant 2l-d_l(\lambda)+ 1$ where $l$ is the cardinality of $L$.
\end{pro}
\begin{proof}
    We have a surjection $R_{\lambda}\twoheadrightarrow \bC[y_1,...,y_n]/\gr I_x$. Because the ideal $I_x$ defines a set of $\frac{n!\times 2^{n}}{b_1!...b_k!}$ points in $\bC^{n}$, the quotient $\bC[y_1,...,y_n]/I_x$ has dimension $\frac{n!\times 2^{n}}{b_1!...b_k!}$ over $\bC$, and therefore the same is true for $\bC[y_1,...,y_n]/\gr I_x$. Thus, the description of $\gr I_x$ follows from the equality (\ref{main TS}). We now prove (\ref{main TS}), it is clear that we only need to show $\dim_{\bC} R_{\lambda}\leqslant \frac{n!\times 2^{n}}{b_1!...b_k!}$. In the rest of the proof, we simply write $\dim $ for $\dim_\bC$.

    We prove (\ref{main TS}) by induction on $n$, the statement is true for $n=1$ (\cref{just sym}). For $0\leqslant j\leqslant m$, write $\lambda^{j}$ for the partition that we obtain from $\lambda$ by replacing $\lambda_j$ by $\lambda_j-2$. From the induction hypothesis, the dimensions of $R_{\lambda^{j}}$ are $\frac{b_i\times (n-1)!\times 2^{n}}{b_1!...b_k!}$ for the index $i$ such that $b_{i-1}\leqslant j< b_{i}$. The number $\frac{n!\times 2^{n}}{b_1!...b_k!}$ is precisely $\sum_{j=0}^{m} \dim R_{\lambda^j}$, so our aim is to prove that 
    \begin{equation}\label{dim inq}
        \dim R_\lambda\leqslant \sum_{j=0}^{m} \dim R_{\lambda^{j}}.
    \end{equation}

    The ring $R_\lambda$ has a filtration by the submodules $F_r= y_n^{r}R_\lambda$. We want to show that each quotient $F_r/F_{r+1}$ has the structure of a cyclic $R_{\lambda^{j}}$-module where $j$ is the index such that $\lambda_j> r\geqslant \lambda_{j+1}$. Then (\ref{dim inq}) follows as a corollary. More precisely, we prove that the map $$f_j: R_{\lambda^{j}}\rightarrow F_r/F_{r+1}, \,  t \mapsto ty_n^{r}$$ is a well-defined ring homomorphism. The claim about $R_{\lambda^{j}}$-module structure will naturally follow. 

    For $f_j$ to be well-defined, we need to show that for certain $p$, the elements of the forms $y_n^{r}e^{2}_{p}(L)$, $L\subset \{1,...,n-1\}$ are in the ideal $(y_n^{r+1})$ of $R_\lambda$. For notational simplicity, we assume $L= \{1,...,l\}$. Consider the polynomial $\prod_{i=1}^{l} (z^2-y_i^{2})$, write it as $\sum_{i=0}^{2l} c_{d} z^{d}$. Our task is to prove $y_n^{r}c_{d}$ belongs to the ideal $(y_n^{r+1})$ of $R_\lambda$ for $d\leqslant d_l(\lambda^{j})-2$. In the following, we only consider $l$ so that $d_l(\lambda^{j})>0$.
    
    By the definition of $T_\lambda$, it contains the coefficient of $z^{d}$ for $d\leqslant d_{l+1}(\lambda)- 1$ of the polynomials $(t^2-y_n^{2})\prod_{i=1}^{l} (z^2-y_i^{2})$. Hence, we have $c_{d-2}= y_{n}^{2}c_{d}$ in $R_\lambda$ for $d\leqslant d_{l+1}(\lambda)$. We consider two cases.
    \begin{enumerate}
        \item $d_{l+1}(\lambda)= d_l(\lambda^j)+2$. Then for $d\leqslant d_l(\lambda^j)$, we have $c_d= y_n^{2}c_{d+2}$. Consequently, $y_n^rc_d\subset (y_n^{r+1})$ in $R_\lambda$.
        \item $d_{l+1}(\lambda)= d_l(\lambda^j)$, equivalently $\lambda_{n-l-1}< \lambda_j$. By the same argument as in the previous case, we only need to show $y_n^rc_{d_l(\lambda^j)-2}\subset (y_n^{r+1})$. For $d\leqslant d_{l+1}(\lambda)= d_l(\lambda^{j})$, we have $c_{d-2}= y_{n}^{2}c_{d}$. Next, $c_d= 0$ in $R_\lambda$ for $d\leqslant d_l(\lambda)-2$. By definition, $d_{l+1}(\lambda)-d_{l}(\lambda)= \lambda_{n-l-1}\leqslant \lambda_{j+1}\leqslant r$. If $d_l(\lambda)\geqslant 2$, we have in $R_\lambda$, 
        $$y_{n}^{r}c_{d_l(\lambda^j)-2}= y_{n}^{r}c_{d_{l+1}(\lambda^j)-2}= y_{n}^{r-2}c_{d_{l+1}(\lambda^j)-3}=...= y_{n}^{r-\lambda_{n-l-1}} c_{d_l(\lambda)-2}= 0.$$
        If $d_l(\lambda)= 0$, then write $y_{n}^{r}c_{d_l(\lambda^j)-2}$ as $y_{n}^{r-\lambda_{n-l-1}+2}c_{0}$. This element is divisible by $y_n^{2}c_0$, the free coefficient of $(z^2-y_n^{2})\prod_{i=1}^{l} (z^2-y_i^{2})$. This free coefficient is $0$ in $R_\lambda$ because we are considering $d_{l+1}(\lambda)= d_l(\lambda^j)>0$. Therefore, we always have $y_{n}^{r}c_{d_l(\lambda^j)-2}= 0$.
    \end{enumerate}
\end{proof}
Recall from \cref{uniform} that we have the notion of uniform generators for $I_x$. A corollary of \cref{square Tanisaki} is as follows.

\begin{cor}\label{ideal of Wx}
    Recall that we have the remainders $R_{L}(z)$ of the long division $P_L$ by $Q_l$. The coefficients of $R_{L}(z)$ are homogeneous polynomials in $y_1,...,y_n, t_1,...,t_k$. \cref{square Tanisaki} shows that the coefficients of $z^d$ in $R_{L}(z)$ for $d<d_l$ generate $I_x$ and the leading terms of these coefficients generate $\gr I_x$. Therefore, these coefficients form a set of uniform generators for $I_x$.
\end{cor}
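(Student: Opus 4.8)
The plan is to verify the two defining conditions of \cref{uniform} for the family of polynomials $\{R_{L,d} : L\subseteq\{1,\dots,n\},\ 0\leqslant d<d_{|L|}\}$, where $R_{L,d}\in\bC[y_1,\dots,y_n,t_1,\dots,t_k]$ denotes the coefficient of $z^d$ in the remainder $R_L(z)$, the long division being carried out with $t_1,\dots,t_k$ kept as variables. Two preliminary remarks organise the argument. Since $P_L(z,y)$ and $Q_l(z)$ both lie in $\bC[y,t][z^2]$, the entire division takes place in $\bC[y,t][z^2]$, so $R_L(z)$ involves only even powers of $z$; hence $R_{L,d}=0$ for $d$ odd, and it suffices to treat even $d\leqslant d_l-2$. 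By \cite[Proposition 3.1]{GARSIA199282} each $R_{L,d}$ is homogeneous of degree $2l-d$ in $y,t$ jointly, and (by uniqueness of division) its $t=0$ specialization is obtained by performing the same division with $Q_l(z)$ replaced by its $t=0$ value $z^{d_l}$; that specialization is therefore the coefficient of $z^d$ in $P_L(z,y)=\prod_{i\in L}(z^2-y_i^2)$, which for $d=2(l-p)$ equals $(-1)^p e^2_p(L)$. As $d$ ranges over the even integers in $[0,d_l-2]$, the index $p=l-d/2$ ranges over exactly the integers with $2p\geqslant 2l-d_l(\lambda)+1$ (using that $d_l(\lambda)$ is even). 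So the $t=0$ specializations of the $R_{L,d}$ with $d<d_l$ are, up to sign, precisely the generators $e^2_p(L)$ of $T_\lambda$; by \cref{square Tanisaki} they generate $\gr I_x$, which is condition (2) of \cref{uniform}.

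For condition (1), fix a generic $x\in\ft_e$ with coordinates $(x_1,\dots,x_k)$. For $y\in W.x$ and $|L|=l$, only $n-b_i$ of the coordinates of $y$ have square different from $t_i^2$, so at least $l-(n-b_i)$ of the coordinates $\{y_j : j\in L\}$ have square $t_i^2$; hence $Q_l(z)$ divides $P_L(z,y)$ as a polynomial in $z$, the specialization $R_{L,d}|_{t=x}$ vanishes on $W.x$, and thus $R_{L,d}|_{t=x}\in I_x$. Its top-degree homogeneous part in $y$ is its $t=0$ specialization $\pm e^2_p(L)$, so the leading forms of the family $\{R_{L,d}|_{t=x} : d<d_l\}$ generate $T_\lambda=\gr I_x$ by \cref{square Tanisaki}. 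I would then apply the standard lifting lemma for the grading of $\bC[\fh]$: if elements of an ideal $J\subseteq\bC[\fh]$ have leading forms that generate $\gr J$, then they generate $J$ — given $h\in J$, subtract a combination of the given elements with homogeneous coefficients killing the leading form of $h$, which strictly lowers $\deg h$, and iterate. Applied to $J=I_x$, this is condition (1).

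Finally, the flatness statement appended to \cref{uniform} follows by a standard argument: $M:=\bC[y,t]/(R_{L,d})$ is graded, and by \cref{square Tanisaki} its special fibre $M/(t)M=R_\lambda$ has dimension $D:=n!\,2^n/(b_1!\cdots b_k!)$, so graded Nakayama produces a graded surjection $\bC[t]^{\oplus D}\twoheadrightarrow M$; condition (1) gives $\dim_\bC M/(t-x)M=\dim_\bC\bC[y]/I_x=D$ for generic $x$, so by upper semicontinuity the fibre dimension is identically $D$ over the domain $\bC[t]$, forcing $M$ to be free of rank $D$. I do not expect a genuine obstacle in any of this: the real content is \cref{square Tanisaki}, and the only points that need care are the bookkeeping between the ranges $d<d_l$ and $2p\geqslant 2l-d_l(\lambda)+1$, the vanishing of the odd-degree coefficients, and — for condition (2) — the identification of the $t=0$ specialization with the leading form in $y$, which is exactly the homogeneity statement cited above together with the description of leading terms recalled before \cref{square Tanisaki}.
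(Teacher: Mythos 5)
Your proposal is correct and follows essentially the same approach as the paper: the corollary is an immediate consequence of \cref{square Tanisaki} together with the facts recorded just before it (the $R_{L,d}$ are jointly homogeneous in $y,t$, they lie in $I_x$ after specializing $t\to x$, and their leading $y$-forms are $\pm e^2_p(L)$), plus the standard leading-form lifting lemma. The only difference is that you re-derive the homogeneity and leading-form identification from uniqueness of the division algorithm, whereas the paper cites \cite[Proposition~3.1]{GARSIA199282} for them; your verification is accurate.
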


On the one hand, our construction of $T_\lambda$ and the proof of \cref{square Tanisaki} closely resemble the results of type A from \cite[Section 3]{GARSIA199282} and \cite[Theorem 1]{Tanisaki}. On the other hand, we emphasize that the ideal $T_\lambda$ is original in the sense that it cannot be directly obtained from the results in type A. In the following, we explain the details.  

In \cite{Tanisaki}, the author instead considers $W.x$ as a subset of $\bC^{2n}$. Let the corresponding coordinate functions be $(y_1,...,y_{2n})$. A similar construction was carried out as follows. Consider $L'\subset \{1,2,...,2n\}$ with the cardinality $l'$. We can use the fact that $\prod_{i\in L'} (z- y_i)$ is divisible by $\prod_{i=1}^{k} (z^{2}- t_i^{2})^{max(0,l'-(2n-b_i)}$ to obtain an ideal $T_{\lambda}'\subset \bC[y_1,...,y_{2n}]$. Consider the quotient map $q_{\fh}: \bC[y_1,...,y_{2n}] \xrightarrow{/(y_i+y_{2n+1-i})} \bC[y_1,...,y_n]$. Then $q_{\fh}(T_\lambda'+ (y_i+y_{2n+1-i}))$ is a subset of $\gr I_x= T_\lambda$. The following example demonstrates a case where the inclusion $q_\fh(T'_\lambda+ (y_i+y_{2n+1-i}))\subset T_\lambda$ is a strict inclusion.
\begin{exa}\label{improve Tanisaki}
    Consider $b_1=3, b_2=1$, so $\lambda= (4,2,2)$ and $n= 4$. The point $x$ is $(t_1,t_1,t_1,t_2)$. Our ideal $T_\lambda$ is generated by $\sum_{i=1}^{4} x_i^{2}$ and $x_i^{2}x_j^{2}$ for $i\neq j$. The elements $x_i^{2}x_j^{2}$ are not in $q_{\fh}(T_\lambda'+ (y_i+y_{2n+1-i}))$. More precisely, we only have $x_i^{2}x_j^{2}+ x_j^{2}x_k^{2}+ x_k^{2}x_i^{2}$ in $q_{\fh}(T_\lambda'+ (y_i+y_{2n+1-i}))$ for $i,j,k$ pairwise distinct. This phenomenon implies the geometric fact that the intersection of the closure of a nilpotent orbit in $\fgl_{2n}$ and $\fsp_{2n}\subset \fgl_{2n}$ may not be reduced. 
\end{exa}

To conclude this section, we recall a technical result on symmetric polynomials. For a nonempty set $L\subsetneq \{1,...,n\}$, write $L^c$ for the complement $\{1,...,n\}\setminus L$. Recall that we write $e^{2}_{p}(L)$ for the $p$-th elementary symmetric polynomials of $l$ variables $y_i^{2}$, $i\in L$. Write $h^{2}_{p}(L^{c})$ for the $p$-th complete symmetric polynomials of $n-l$ variables $y_i^{2}$, $i\in L^{c}$. Let $I_n$ be the ideal of $\bC[y_1,...,y_n]$ that is generated by the symmetric polynomials of $y_1^{2},..., y_n^{2}$.
\begin{lem}\cite[Proposition 2.2]{Biagioli2008} \label{e2p}
    For $1\leqslant p\leqslant l$, we have $e^{2}_{p}(L)- (-1)^{p}h^{2}_{p}(L^c)\in I_n$. 
\end{lem}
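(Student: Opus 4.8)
The plan is a generating-function computation carried out modulo $I_n$. I would introduce the formal power series in an auxiliary variable $t$
$$E_L(t) = \prod_{i \in L}(1 + y_i^2 t) = \sum_{p \geqslant 0} e^{2}_{p}(L)\, t^p, \qquad E_{L^c}(t) = \prod_{i \in L^c}(1 + y_i^2 t) = \sum_{p \geqslant 0} e^{2}_{p}(L^c)\, t^p,$$
so that the full product is $E(t) := E_L(t)\, E_{L^c}(t) = \prod_{i=1}^n (1 + y_i^2 t) = \sum_{p=0}^n e_p(y_1^2,\dots,y_n^2)\, t^p$.

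First I would observe that every positive-degree coefficient of $E(t)$ is one of the generators of $I_n$, so $E(t) \equiv 1$ in the power series ring $(\bC[y_1,\dots,y_n]/I_n)[[t]]$. Next, since $E_{L^c}(t)$ has constant term $1$, it is a unit in $\bC[y_1,\dots,y_n][[t]]$, with
$$E_{L^c}(t)^{-1} = \prod_{i \in L^c}(1 + y_i^2 t)^{-1} = \prod_{i \in L^c} \sum_{p \geqslant 0} (-1)^p y_i^{2p}\, t^p = \sum_{p \geqslant 0} (-1)^p h^{2}_{p}(L^c)\, t^p.$$
Multiplying the congruence $E_L(t)\, E_{L^c}(t) \equiv 1$ by $E_{L^c}(t)^{-1}$ inside $(\bC[y_1,\dots,y_n]/I_n)[[t]]$ gives $E_L(t) \equiv \sum_{p \geqslant 0} (-1)^p h^{2}_{p}(L^c)\, t^p$, and extracting the coefficient of $t^p$ yields $e^{2}_{p}(L) - (-1)^p h^{2}_{p}(L^c) \in I_n$ for all $p$, in particular for $1 \leqslant p \leqslant l$.

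I do not expect a real obstacle here: the only point that needs a word of care is that one is manipulating congruences inside a power series ring over the quotient $\bC[y_1,\dots,y_n]/I_n$, and that $E_{L^c}(t)$ may be inverted there precisely because its constant term is a unit. Everything else is the classical identity relating the generating functions of elementary and complete symmetric functions, specialized to the squared variables $y_i^2$. (If one prefers to avoid power series entirely, the same identity can be proved by induction on $|L^c|$, adjoining one variable $y_j$ with $j \in L^c$ at a time and combining $h^{2}_{p}(L^c) = h^{2}_{p}(L^c \setminus \{j\}) + y_j^2\, h^{2}_{p-1}(L^c)$ with $e^{2}_{p}(L \cup \{j\}) = e^{2}_{p}(L) + y_j^2\, e^{2}_{p-1}(L)$; the generating-function argument above is cleaner.)
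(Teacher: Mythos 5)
Your proof is correct; the paper does not give its own argument for this lemma but simply cites \cite{Biagioli2008}, and the generating-function computation you carry out (invert $E_{L^c}(t)$ in $(\bC[y_1,\dots,y_n]/I_n)[[t]]$ using $E(t)\equiv 1$) is the standard and almost certainly the intended proof of this elementary/complete duality modulo $I_n$.
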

\begin{cor} \label{power to eh}
    As a consequence, in the notation of \cref{square Tanisaki}, we have $y_i^{2k}\in T_\lambda$ because $e^{2}_{k}(L)\in T_\lambda$ for $|L|= n-1$.
\end{cor}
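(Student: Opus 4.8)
The plan is to deduce the corollary directly from \cref{e2p} together with the explicit description of $T_\lambda$ furnished by \cref{square Tanisaki}. The key point is that when the complementary set $L^c$ consists of a single index $\{i\}$, the complete symmetric polynomial $h^{2}_{p}(L^c)$ collapses to the single monomial $y_i^{2p}$; thus \cref{e2p} reads $e^{2}_{p}(L) - (-1)^{p} y_i^{2p} \in I_n$ whenever $|L| = n-1$ and $L^c = \{i\}$. So the whole statement reduces to two bookkeeping facts: (a) the inclusion $I_n \subseteq T_\lambda$, and (b) the membership $e^{2}_{k}(L)\in T_\lambda$ for every $(n-1)$-element subset $L$.

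For (a): the ideal $I_n$ is generated by the polynomials $e_j(y_1^{2},\dots,y_n^{2}) = e^{2}_{j}(\{1,\dots,n\})$ for $j \geqslant 1$. Since $d_n(\lambda) = \lambda_0 + \cdots + \lambda_m = |\lambda| = 2n$, the membership criterion $2j \geqslant 2n - d_n(\lambda) + 1 = 1$ of \cref{square Tanisaki} holds for every $j \geqslant 1$, so each such generator already lies in $T_\lambda$, whence $I_n \subseteq T_\lambda$.

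For (b): I would first record the degree count $d_{n-1}(\lambda) = \lambda_1 + \cdots + \lambda_m = |\lambda| - \lambda_0 = 2n - 2k$, using that $\lambda_0$, the largest part of $\lambda$, equals the number of parts of the partition $(b_1,b_1,\dots,b_k,b_k)$ whose transpose is $\lambda$, namely $2k$. Then the criterion of \cref{square Tanisaki} applied to $l = n-1$ requires $2p \geqslant 2(n-1) - (2n-2k) + 1 = 2k - 1$, i.e.\ $p \geqslant k$, so indeed $e^{2}_{k}(L)\in T_\lambda$ for every $(n-1)$-element $L$, and $k$ is the smallest such exponent. Combining (a) and (b): fixing $i$ and taking $L = \{1,\dots,n\}\setminus\{i\}$, \cref{e2p} gives $e^{2}_{k}(L) - (-1)^{k} y_i^{2k} \in I_n \subseteq T_\lambda$, and since $e^{2}_{k}(L)\in T_\lambda$ this forces $y_i^{2k} \in T_\lambda$.

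There is no real obstacle here, the argument being entirely combinatorial, but two minor points deserve attention. First, \cref{e2p} requires $1 \leqslant k \leqslant l = n-1$, so the degenerate case $k = n$ (all $b_i = 1$) must be handled separately: there $T_\lambda = I_n$ by \cref{just sym}, and $y_i^{2n} \in I_n$ because the $y_j^{2}$ are roots of the monic polynomial $\prod_j(z - y_j^{2})$ whose non-leading coefficients (the $e_j(y_1^{2},\dots,y_n^{2})$) generate $I_n$. Second, one should double-check the two identities $d_n(\lambda) = 2n$ and $d_{n-1}(\lambda) = 2n - 2k$; it is precisely the second that isolates the correct exponent $2k$ and shows it is optimal for this method.
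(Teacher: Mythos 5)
Your argument is correct and is precisely the intended one: the paper states the corollary as an immediate consequence of \cref{e2p} and the membership criterion in \cref{square Tanisaki} without further detail, and you supply exactly the two missing computations—that $I_n\subseteq T_\lambda$ (since $d_n(\lambda)=2n$ makes the criterion $2p\geqslant 1$) and that $d_{n-1}(\lambda)=2n-2k$ (since $\lambda_0=2k$), so the threshold for $|L|=n-1$ is $p\geqslant k$. Your observation that the boundary case $k=n$ falls outside the range $p\leqslant l$ in \cref{e2p} and must be handled directly via $T_\lambda=I_n$ and the characteristic-polynomial relation is a genuine point of care the paper glosses over; it is correct as you state it.
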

\subsection{Case $a\neq 0$}
Now the point $x$ takes the form $(t_1,...,t_1,....,t_k,...,t_k,0,...,0)$ where $0$ appears $a= n-\sum_{i=1}^{k} b_i$ times. For $1\leqslant l\leqslant n$, write $d'_l$ for the number $\sum_{i=1}^{k}max(0, l-(n-b_i))+ max(0, l- (n-a))$. Similarly to the previous section, for each $L\subset \{1,...,n\}$, we have $e^{2}_{p}(L)\in \gr I_x$ for $2p\geqslant 2l- d_l'+ 1$ where $l= |L|$. Next, in any $n-a+1$ arbitrary coordinates of $y\in W.x$, there must be at least one $0$. Hence, the elements $y_K:= \prod_{i\in K} y_i$ belong to $I_x$ and $\gr I_x$ for $K\subset \{1,...,n\}$, $|K|= n-a+1$. 
%Note that the roles of $b_i$ and $a$ are not the same, so we do not have a concise expression of $d'_l$ in terms of the dual partition $\lambda$. 

Write $\lambda(b)$ for the dual partition of $(b_1,b_1,...,b_k,b_k)$, it is a partition of $2n-2a$. Write $x'$ for the point $(t_1,...,t_1,...,t_k,...,t_k)\in \bC^{n-a}$ and write $W'$ for the Weyl group of signed permutations on these coordinates. Consider $l$ coordinates $y_{i_1},...,y_{i_l}$ of $y\in W.x$ so that $1\leqslant i_1,...,i_l\leqslant b_1+....+b_k$. Provided these $l$ coordinates are not zero, then they are governed by the conditions that come from $I_{x'}= T_{\lambda(b)}$ in the notation of \cref{square Tanisaki}. This observation gives rise to a set of elements in $I_x$ as follows. 

Consider $I_{x'}$ as a subset of $\bC[y_1,...,y_{n-a}]\subset \bC[y_1,...,y_n]$. Consider a polynomial $f\in I_{x'}$ and assume that all the variables that appear in $f$ are $y_{i_1},...,y_{i_l}$. Then the product $(y_{i_1}...y_{i_l})f$ is an element of $I_x$. Similarly, the same process works if we replace $I_{x'}$ and $I_x$ by $\gr I_x$ and $\gr I_{x'}$. Consequently, we have the following proposition. 

\begin{pro} \label{vague generators} \leavevmode
Let $\sco: \bC[y_1,...,y_{n-a}]\rightarrow \bC[y_1,...,y_{n}]$ be the map that sends $f$ to the product of $f$ and all the variables $y_i$ that appear in $f$. Let $T(b)$ be the following set of partial elementary and partial complete symmetric polynomials  
    $$\{e^{2}_{p}(L), h^{2}_{p}(\{1,...n-a\}\setminus L), L\subset\{1,...,n-a\}, l= |L|, 2p> 2l- d_l(\lambda(b))\}.$$
\begin{enumerate}
    \item We then have $\sco(I_{x'})\subset I_x$ and $\sco(\gr I_{x'})\subset \gr I_x$. 
    \item From \cref{square Tanisaki} and \cref{e2p}, $T(b)\subset \gr I_{x'}$. Hence, $\sco(T(b))\subset \gr I_x$.
\end{enumerate}
\end{pro}

The following example demonstrates how $\sco$ works and why we consider $T(b)$ instead of the set of generators of $T_{\lambda(b)}$ in \cref{square Tanisaki}. An important aspect is that the ideal generated by $\sco(e^{2}_{p}(L))$ for $p, L$ as described in \cref{vague generators} might be a proper subset of the ideal generated by $\sco(T(b))$.
\begin{exa}
    Consider $b_1=3, b_2= b_3 = 1$, and $a= 1$. We have $n= 6$, $\lambda(b)= (6,2,2)$. A set of generators of $T_{\lambda(b)}$ includes $e_p(y_1^{2},...,y_5^{2})$ for $p\leqslant 3$ and $y_i^{2}y_j^{2}y_k^{2}$ for $1\leqslant i<j<k\leqslant 5$. Here we skip the elements $e_{3}^{2}(L)$ with $|L|=4$ because they are generated by $y_i^{2}y_j^{2}y_k^{2}$. We then have $\sco(e_p(y_1^{2},...,y_5^{2}))= y_1...y_5e_p(y_1^{2},...,y_5^{2})$ and $\sco(y_i^{2}y_j^{2}y_k^{2})= y_i^{3}y_j^{3}y_k^{3}$. Next, we point out a new generator of $\gr I_x$ originating from $\sco(T(b))$. 

    Because $e_{3}^{2}(L)\in T_{\lambda(b)}$ for $|L|=3,4$, we have $p_{3}^{2}(L^c)\in T_{\lambda(b)}$ for $|L^c|= 1,2$ (\cref{e2p}). Hence, $T(b)$ contains $x_1^{6}, x_2^{6}, x_1^{6}+ x_1^{4}x_2^{2}+ x_1^{2}x_2^{4}+ x_2^{6}$. By the definition of $\sco$, we have $\sco(x_i^{6})= x_i^{7}$ for $i=1,2$, and $\sco (x_1^{6}+ x_1^{4}x_2^{2}+ x_1^{2}x_2^{4}+ x_2^{6})= x_1x_2(x_1^{6}+ x_1^{4}x_2^{2}+ x_1^{2}x_2^{4}+ x_2^{6})$. Consequently, the ideal generated by $\sco(T(b))$ contains $x_1^{5}x_2^{3}+x_1^{3}x_2^{5}$. This element is not generated by the elements in the previous paragraph. In fact, $\sco(f)$ is not generated by the elements $e^{2}_{p}(L)$ and $y_K$ that we have mentioned before \cref{vague generators}. Therefore, $\sco(T(b))$ gives rise to new generators of $\gr I_x$.

\end{exa}
We propose the following conjecture.
\begin{conj} \label{conj a not 0}
    When $a\neq 0$, the ideal $\gr I_x$ is generated by the following elements.
    \begin{enumerate}
        \item $e^{2}_{p}(L)$ for $L\subset \{1,...,n\}$, $|L|= l$, and $2p\geqslant 2l- d_l'+ 1$. Here $d'_l= \sum_{i=1}^{k}max(0, l-(n-b_i))+ max(0, l- (n-a))$.
        \item $y_K= \prod_{i\in K} y_i$ for $K\subset \{1,...,n\}$, $|K|= n-a+1$.
        \item $W.\sco(T(b))$, the $W$-stable set that is generated by $\sco(T(b))$.
    \end{enumerate}
\end{conj}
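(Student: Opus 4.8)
The plan is to deduce \cref{conj a not 0} from a dimension estimate and to prove that estimate by the filtration induction used for \cref{square Tanisaki}. Let $T\subset\bC[y_1,\dots,y_n]$ be the ideal generated by the three listed families. By \cref{vague generators}, together with \cref{square Tanisaki} and \cref{e2p}, all three families lie in $\gr I_x$, so $T\subseteq\gr I_x$ and there is a surjection $\bC[y_1,\dots,y_n]/T\twoheadrightarrow\bC[y_1,\dots,y_n]/\gr I_x$. The target has dimension $|W.x|=\tfrac{2^{\,n-a}\,n!}{a!\,b_1!\cdots b_k!}$ (the stabiliser of $x$ in $W$ is $W(B_a)\times\prod_i S_{b_i}$), so it suffices to show $\dim_\bC\bC[y_1,\dots,y_n]/T\le\tfrac{2^{\,n-a}\,n!}{a!\,b_1!\cdots b_k!}$; the surjection is then an isomorphism, the listed elements form a set of uniform generators of $I_x$ in the sense of \cref{uniform}, and flatness over $\bC[t_1,\dots,t_k]$ follows, exactly as \cref{ideal of Wx} was obtained from \cref{square Tanisaki}.

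For the bound I would induct on $n$. Put $R=\bC[y_1,\dots,y_n]/T$ and filter it by $F_r=y_n^{\,r}R$, $r\ge 0$. On $W.x$ the coordinate $y_n$ takes the value $0$ (with multiplicity $a$) and the values $\pm t_i$; accordingly one expects $F_0/F_1=R/y_nR$ to be a quotient of the rank-$(n-1)$ ring of the same type for the shape $(b_1,\dots,b_k;\,a-1)$, and each $F_r/F_{r+1}$ with $r\ge 1$ to be a cyclic module over the rank-$(n-1)$ ring for the shape obtained from $(b_1,\dots,b_k;a)$ by lowering a single block $b_i$ by one — which block, and over which range of $r$, being dictated by the relevant Tanisaki data exactly as the partitions $\lambda^{j}$ were in \cref{square Tanisaki}. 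Granting these structures, the induction hypothesis gives $\dim F_0/F_1\le\tfrac{2^{\,n-a}(n-1)!}{(a-1)!\,b_1!\cdots b_k!}$ and $\dim F_r/F_{r+1}\le\tfrac{2^{\,n-1-a}(n-1)!\,b_i}{a!\,b_1!\cdots b_k!}$; summing over all $r$ and using $a+\sum_i b_i=n$, these telescope to $\tfrac{2^{\,n-a}\,n!}{a!\,b_1!\cdots b_k!}$, which is the bound. The degenerate cases (some $b_i$ or $a$ equal to $0$) reduce to smaller cases, ultimately to \cref{square Tanisaki} when $a=0$.

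The main obstacle is verifying the two module structures, i.e.\ that for the appropriate $r$ and for each generator $g$ of the relevant smaller ideal one has $y_n^{\,r}g\in(y_n^{\,r+1})+T$ in $\bC[y_1,\dots,y_n]$. For $g=e^2_p(L)$ this is a finite long-division computation as in \cref{square Tanisaki}, but the smaller ideals are again of the $a\neq 0$ type, hence also have generators of the form $\sco(f)$ with $f\in T(b)$, and one must control $y_n^{\,r}\sco(f)\bmod T$. This is where the difficulty lies: $\langle\sco(T(b))\rangle$ has no independent presentation — we only know the containment $\sco(T(b))\subseteq\gr I_{x'}=T_{\lambda(b)}$, equivalently we have no presentation of $\bC[\overline{\bO}_e\cap\fh]$ — and the example following \cref{vague generators} shows the $\sco$-images combine with the coordinate relations $y_K$ to produce genuinely new elements, so no uniform reduction procedure is visible. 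This is precisely why the argument can be completed only for $b_1\le 2$: if $b_1=1$ then every $b_i=1$, $\lambda(b)$ is a single row and $T(b)$ collapses to the full set of symmetric functions in $y_1^{2},\dots,y_{n-a}^{2}$, so one can finish by exhibiting an explicit monomial basis of $R$ (equivalently a Gröbner basis of $T$); if $b_1=2$ then $\lambda(b)=(2k,2j)$ has two rows and $T(b)$ is still explicit enough to run the bookkeeping. Establishing \cref{conj a not 0} for $b_1\ge 3$ would seem to require first giving a generators-and-relations description of $\langle\sco(T(b))\rangle$, which is left open here.
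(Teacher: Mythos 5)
You are addressing a statement the paper itself leaves as a conjecture; the paper does not prove it, and your honest remarks about the difficulty are appropriate. Your plan --- deduce the conjecture from the dimension bound $\dim_\bC \bC[y_1,\dots,y_n]/T\le |W.x|=2^{\,n-a}n!/(a!\,b_1!\cdots b_k!)$, and prove the bound by the filtration $F_r=y_n^{\,r}R$ --- is exactly the route the paper takes: \cref{bi=1} proves the case $b_1=\dots=b_k=1$ this way, and \cref{b1=2} is precisely this induction carrying $a$ from $a>1$ down to $a=1$ when $b_1=2$. So the approach coincides with the paper's.

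You do, however, misidentify the concrete obstruction. You attribute the difficulty to $\langle\sco(T(b))\rangle$ lacking an independent presentation and conclude that for $b_1\le 2$ the bookkeeping goes through, so "the argument can be completed only for $b_1\le 2$." That overreaches. The paper's \cref{why induction fails} exhibits a failure that already occurs with $b_1=2$, $a=1$, and has nothing to do with the opacity of $T(b)$: when $a$ drops from $1$ to $0$, the pure-power relation tightens from $y_i^{2k+1}$ to $y_i^{2k}$, so $F_0/F_1=R_{n,1,k}/y_nR_{n,1,k}$ is simply not a module over $R_{n-1,0,k}$, and the expected cyclicity of the graded pieces fails (in the example $n=3$ the pieces have dimensions $5,4,3$, not $4,4,4$, even though the total $12$ is right). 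Your blanket claim that $F_0/F_1$ is a quotient of the shape-$(b_1,\dots,b_k;a-1)$ ring thus only holds for $a\ge 2$. This is precisely why \cref{b1=2} is stated as a reduction from $a>1$ to $a=1$ and stops there, with the paper explicitly saying the $a=1$ base case is not proved (only computer-checked). So the method you outline, carried out carefully, reproduces the paper's partial results \cref{bi=1} and \cref{b1=2} but does not close the conjecture even for $b_1=2$; the genuine open point is the $a=1$ base case, not the combinatorics of $\sco(T(b))$ for $b_1\ge 3$.
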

\begin{rem}
    Similarly to \cref{conj a not 0}, we have a corresponding conjectural set of generators for the ideal $I_x$. In particular, we still have $y_K\in I_x$, we replace $e^{2}_{p}(L)$ with certain remainders of polynomial divisions as in \cref{ideal of Wx}, and we replace $W.\sco(T(b))$ with $W.\sco(T'(x'))$ for a set of uniform generators $T'(x')$ of $I_{x'}$. Here, the existence of $T'(x')$ is explained in \cref{ideal of Wx}. Therefore, this set of generators is uniform in the sense of \cref{uniform} if \cref{conj a not 0} is true.
\end{rem}
In practice, once the numbers $a$ and $b_i$ are given, we select a small subset of elements from \cref{conj a not 0} and show that they generate $\gr I_x$. In the following part of this section, we present supporting evidence for this conjecture. 

From \cref{power to eh}, we have $y_i^{2k}\in T_{\lambda(b)}$, so $\sco(y_i^{2k})= y_i^{2k+1}\in \gr I_x$. 
\begin{pro}\label{bi=1}
    \cref{conj a not 0} holds when $b_1=...=b_k= 1$. A set of generators of $\gr I_x$ consists of the following elements.
    \begin{itemize}
        \item $y_i^{2k+1}$, $\sum_{i=1}^{n} y_i^{2j}$, $1\leqslant j \leqslant n$.
        \item $y_K= \prod_{l\in K} y_l$ for $K\subset \{1,2,...,n\}$, $ |K|= k+ 1$.
    \end{itemize}
\end{pro}
\begin{proof}
    Write $I^{n}_k$ for the ideals of $\bC[\fh]= \bC[y_1,y_2,...,y_n]$ generated by the elements $y_K$, $y_i^{2k+1}$, and $\sum_{i=1}^{n} y_i^{2j}$ in the lemma. Let $R_k^{n}$ be the quotient ring $\bC[y_1,y_2,...,y_n]/ I^{n}_k$. For a generic $x\in Z_\fg(\fl)$, the set $W.x$ has $|W|/|W_L|= \frac{n!\times 2^{k}}{(n-k)!}$ points. And $I^{n}_k\subset \gr I_\lambda'$, so $\dim R^{n}_k\geqslant |W|/|W_L|$. Our aim is to show that $$\dim R^{n}_k \leqslant \frac{n!\times 2^{k}}{(n-k)!}.$$ 

    We prove this inequality by induction. When $k= n$, $R^{n}_k= \bC[\fh]/ \bC[\fh]^W_+$, which has dimension $2^{n}\times n!$. When $k= 0$, it is clear that $R^{n}_k= \{1\}$. For $n> k> 0$, the ring $R^{n}_k$ has a filtration by the submodules $F_r= y_n^{r}R^{n}_k$, $0\leqslant i \leqslant 2k$. First, $F_0/F_1$ is a free $R_{k}^{n-1}$-module generated by $1$. Next, we show that the quotient $F_{1}/F_{2}$ has a cyclic $R_{k-1}^{n-1}$-module structure. Consider the surjection $$s: R^{n}_{k}/(y_n)\rightarrow F_{1}/F_{2}, \quad y+(y_n)\mapsto  yy_n+ (y_n^{2}).$$
    
    First, $F_1\subset R^{n}_k$ is annihilated by $y_K$, the kernel of $s$ contains the elements $y_{K-1}$ where $K-1\subset \{1,2,...,n-1\}$ and $|K-1|= k$. Next, we claim that $s(y_1^{2k-1})= 0$, and similarly $s(y_i^{2k-1})= 0$, $1\leqslant i\leqslant n-1$. From \cref{power to eh}, we can rewrite $y_1^{2k-2}$ in $R^{n-1}_{k-1}$ as $\sum^{|K'|= k-1}_{K'\subset\{2,...,n-1\}} y_{K'}^{2}$. Now in $F_1/F_2$, we have $y_1y_ny_{K'}^{2}= 0$ for $|K'|= k-1, L\subset \{2,...,n-1\}$, so $y_1^{2k-1}y_n= 0$. 
    
    The analysis in the previous paragraph shows that the kernel of $s$ contains the elements $y_{K-1}$ and $y_i^{2k-1}$. As a consequence, the map $s$ factors through $R^n_{k}/(y_{K-1}, y_i^{2k-1})\cong R^n_{k}$. Hence, $F_1/F_2$ is a cyclic module generated by $y_n$. Lastly, we have surjections $F_1/F_2\rightarrow F_r/F_{r+1}$ by multiplications by $y_n^{i-1}$. In conclusion, we have
    $$\dim R^{n}_k= \dim F_0/F_1+ \sum_{i=1}^{2k} \dim F_r/F_{r+1}\leqslant \dim  R^{n-1}_k+ 2k\dim R^{n-1}_{k-1}$$
    $$= \frac{(n-1)!\times 2^k}{(n-1-k)!}+ 2k\times \frac{(n-1)!\times 2^{k-1}}{(n-k)!}= \frac{n!\times 2^{k}}{(n-k)!}.$$
\end{proof}
Next, we consider the case where $a\geqslant 1$ and $b_1=2\geqslant b_2\geqslant... \geqslant b_k\geqslant 1$. Write $\lambda= (\lambda_0= 2k+1, \lambda_1, 1,...,1)$ for the dual partition of $(2a, b_1,b_1,...,b_k,b_k)$. Here $\lambda_1= 1+ 2(n-a-k)$ because $n-a-k$ is the multiplicity of $2$ in $(b_1,...,b_k)$. Consider the ideal $I_{n,a,k}$ of $\bC[y_1,...,y_n]$ that is generated by the following elements.
\begin{itemize}
    \item $y_i^{2k+1}$, $\sum_{j=1}^{n} y_i^{2j}$, $1\leqslant i,j \leqslant n$.
    \item $y_K= \prod_{l\in K} y_l$ for $K\subset \{1,2,...,n\}$, $ |K|= n-a+1$.
\end{itemize}
This ideal $I_{n,a,k}$ is the ideal generated by the elements in \cref{conj a not 0}.

\begin{pro}\label{b1=2}
    If $\gr I_x= I_{n,a,k}$ for $a=1$ and $n\geqslant 1$, then $\gr I_x= I_{n,a,k}$ for $a>1$. In particular, if \cref{conj a not 0} is true for $a=1$ and $b_1=2$, it is true for $a>1$ and $b_1=2$. 
\end{pro}
\begin{proof}
    Write $R_{n,a,k}$ for the quotient ring $\bC[y_1,...,y_n]/ I_{n,a}$. We prove by induction that $$\dim R_{n,a,k}\leqslant \frac{n!2^n}{a!2^a(2!)^{n-a-k}}= \frac{n!\times 2^k}{a!}.$$ 
    The latter number can be rewritten as   
    $$\frac{n!\times 2^k}{a!}= \frac{(n-1)! 2^k}{(a-1)!}+ 2(n-a-k)\times \frac{(n-1)! 2^k}{a!}+ 2(2k-n+a)\times\frac{(n-1)! 2^{k-1}}{a!}$$
    $$= \dim R_{n-1,a-1,k}+ (\lambda_0-\lambda_1)\dim R_{n-1,a,k}+ (\lambda_1-1)\dim R_{n-1,a,k-1}.$$
    The ring $R_{n,a,k}$ has a decreasing filtration $F_i= (y_n)^i$ for $0\leqslant i\leqslant 2k+1=\lambda_0$. First, $F_0/F_1$ is a cyclic $R_{n-1,a-1,k}$-module. Next, for $1\leqslant r\leqslant 2\lambda_1-1$, the quotients $F_r/F_{r+1}$ are cyclic $R_{n-1, a,k}$-modules. If $\lambda_0= \lambda_1$, or equivalently $b_k=2$, then we are done. 
    
    If $\lambda_0> \lambda_1$, then $b_k=1$. To conclude the proof, we show that $F_r/F_{r+1}$ are cyclic $R_{n-1,a,k-1}$-modules when $r\geqslant \lambda_1$. In particular, we need to prove that $y_i^{2k-1}y_n^{r}$ belongs to $I_{n,a,k}+ (y_1^{r+1})$. Similarly to the proof of \cref{square Tanisaki}, we prove the containment by introducing certain polynomials and work with their coefficients.

    By \cref{power to eh}, we can replace $y_1^{2k-1}$ in $R_{n-1,a,k-1}$ by
    \begin{equation}\label{magic substitution}
        y_1.y_1^{2k-2}= y_1(\sum_{H\subset \{2,...,n-1\}, |H|= k-1} y_H^{2}).
    \end{equation}
    Consider the polynomial $P_{n-1}= (t-y_1)\prod_{i=2}^{n-1} (t^2- y_i^{2})$, write it as $\sum_{d=0}^{2n-1} c_dt^d$. Then the right side of (\ref{magic substitution}) is $c_{2n-2k-2}$. We claim that $y_n^r c_{2n-2k-2}$ is $0$ in $R_{n,a,k}$. Consider the polynomial $P_{n}= (t-y_1)\prod_{i=2}^{n} (t^2- y_i^{2})$ in $R_{n,a,k}$. By \cref{power to eh}, this polynomial can be rewritten in $R_{n,a,k}$ as
    $$(t-y_1)(\sum_{d=0}^{n-1}(-1)^{n-1-d} t^{2d}y_1^{2n-2-2d})= \sum_{d=0}^{2n-1} (-1)^{n-d}t^{d}y_1^{2n-1-d}.$$
    The coefficient of $t^d$ in $P_n$ are $0$ in $R_{n,a,k}$ for $d\leqslant 2n- 2k-2$ because $y_1^{2k+1}$ is $0$ in $R_{n,a,k}$. Using $P_n= (t^{2}-y_n^{2})P_{n-1}$, we obtain $c_d= y_n^{2}c_{d-2}$ for $d\leqslant 2n- 2k-2$. Recall that $r\geqslant \lambda_1= 1+ 2n-2a-2k$, so $y_n^r c_{2n-2k-2}= y_n^{r-(2n-2a-2k)}c_{2a-2}$. The coefficient $c_{2a-2}$ is $\sum_{|H|= n-a} y_H^{2}$, $H\subset \{1,...,n\}$. Since $r-(2n-2a-2k)\geqslant 1$, each monomial in $y_n^{r-(2n-2a-2k)}c_{2a-2}$ consists of precisely $n-a+1$ distinct $y_i$. Therefore, $y_n^{r-(2n-2a-2k)}c_{2a-2}\in I_{n,a,k}$ by the definition of $I_{n,a,k}$.
    
\end{proof}
We do not have proof of \cref{b1=2} for $a=1$, but the equality of dimension can be verified by using computers. A crucial obstacle to proving results like \cref{conj a not 0} is that the technique of using induction and filtration does not work when $a<b_1$. A particular reason is that the number $d_l'$ in \cref{conj a not 0} depends on the index $i$ such that $b_i>a>b_{i+1}$. We give a simple example that explains why we have to assume that $a>1$ in the proof of \cref{b1=2}. 
\begin{rem}\label{why induction fails}
    Consider $a=1, b_1=2$ and $n=3$. The ideal $\gr I_x\subset \bC[y_1,y_2,y_3]$ is generated by $y_1^{2}+y_2^{2}+ y_3^{2}$, $y_1y_2y_3$ and $y_i^{3}$. The quotient ring $\bC[y_1,y_2,y_3]/ \gr I_x$ has dimension $12$, the dimensions of the associated pieces $F_r/F_{r+1}$ are $5,4$ and $3$ for $r=0,1$ and 2, respectively. For an inductive proof to work, the expectation is that each graded piece has dimension $4$.
    
    In particular, $F_0/F_1= \bC[y_1,y_2]/(y_1^{2}+y_2^{2}, y_1^{3}, y_2^{3})$ is not a module over $\bC[y_1,y_2]/(y_1^{2}, y_2^{2})$, the quotient ring for $a=0, b_1=2$. In summary, $I_{n,0,k}$ contains elements $y_i^{2k}$, but $I_{n,1,k}$ contains only $y_i^{2k+1}$. As a consequence, the first graded piece $F_0/F_1$ of $R_{n,1,k}$ is not a module over $R_{n,0,k}$.

\end{rem}

\section{Functions on scheme theoretic intersections}
In this section, we write $\lambda$ for the partition of $e$. We write $I_e$ or $I_\lambda$ for the defining ideal of the scheme-theoretic intersection $\overline{G.e}\cap \fh$ in $\bC[\fh]$. This section studies the weak flatness condition for the orbit $\bO_e= G.e$ of the form $\Ind_{\fl}^{\fg}\{0\}$ for $L$ is a Levi subgroup of $G$. In particular, we discuss when the containment $I_\lambda\subset  \gr I_x$ is an equality where $x$ is a generic element of $\fZ(\fl)$. When $\fg$ and $\lambda$ are specified, we often write $\bO_\lambda$ for the orbit $\bO_e$.

\subsection{Some linear algebra}
We first recall the Pfaffian and the 'symplectic Pfaffian'. They will play a crucial role in determining the ideals $I_\lambda$.
\begin{defin}\label{Pfaffian}
    Consider an element $Y= (y_{ij})$ of $\fso_{2n}= \fso(V)$. The \textit{Pfaffian} $\pf(Y)$ (defined up to a sign) is a polynomial in terms of $y_{ij}$ so that $\pf(Y)^{2}= \det(Y)$. 
\end{defin}

    Choose a basis of $V$ so that the symmetric bilinear form is given by $\Id_{2n}$. An explicit formula is given by
    $$\pf(Y)= \frac{1}{2^{n}n!}\sum_{\sigma\in S_{2n}} sgn(\sigma) \prod_{i=1}^{n} y_{\sigma(2i-1), \sigma(2i)}$$
    where $S_{2n}$ is the symmetric group of size $2n$ and $\sgn(\sigma)$ is the signature of the permutation.

    Next, let $V$ be a symplectic vector space of dimension $2n$. Choose a basis $v_1,...,v_{2n}$ of $V$. Assume that the symplectic form of $V$ is given by a non-singular skew-symmetric matrix $J$, i.e., $\langle u, v \rangle= u^\intercal Jv$ for $u,v\in V$. Consider a matrix $X\in \End(V)$ that satisfies $JXJ^{-1}= X^\intercal$. Then $X$ satisfies the following property. 
    $$\langle u, Xv\rangle= u^\intercal JXv= u^\intercal X^\intercal Jv= (Xu)^\intercal Jv= \langle Xu, v\rangle= - \langle v, Xu\rangle$$ 
    As a consequence, the matrix $A_X=(a_{ij})$ for $a_{ij}= \langle v_i, Xv_j\rangle$ is skew-symmetric.
\begin{defin}\label{symplectic Pfaffian}
    The \textit{symplectic Pfaffian} $\sPf(X)$ is defined as $\pf(A_X)$. It is a polynomial in terms of the entries of $X$ and satisfies $\sPf(X)^2= \det(X)$. (see, e.g., \cite[Proposition 2.9]{ichizuka2024})
\end{defin}
\begin{exa}
    Consider $V= \bC^4$, $J$ is given by the block matrix $\begin{pmatrix}
0 & \Id_2 \\
-\Id_2 & 0 
\end{pmatrix}$. Then $X$ and $A$ takes the forms
$$
X= \begin{pmatrix}
x_{11} & x_{12} & 0 & x_{14}  \\
x_{21} & x_{22} & -x_{14} & 0 \\
0 & x_{23} & x_{11} & x_{21} \\
-x_{23} & 0& x_{12} & x_{22}\\
\end{pmatrix},\quad \quad  A= \begin{pmatrix}
0 & -x_{23} & -x_{11} & -x_{21}  \\
x_{23} & 0 & -x_{12} & -x_{22} \\
x_{11} & x_{12} & 0 & x_{14} \\
x_{21} & x_{22}& -x_{14} & 0\\
\end{pmatrix}.
$$
We have $\sPf(X)= \pf(A)= -x_{23}x_{14}-x_{11}x_{22}+ x_{12}x_{21}$, and $\det(X)= (x_{23}x_{14}+x_{11}x_{22}- x_{12}x_{21})^2$.
\end{exa}    
\begin{rem}\label{spf is better}
    The symplectic Pfaffian possesses a more favorable property compared to the Pfaffian. Consider $X$ so that $JXJ^{-1}= X^\intercal$. Then the matrix $(t\Id_{2n}-X)$ satisfies the same condition and hence has a symplectic Pfaffian. Consequently, the characteristic polynomial of $X$ is a perfect square. Similarly, suitable principal minors of $(t\Id_{2n}-X)$ are perfect squares. We do not have the same result for the Pfaffian and orthogonal matrices.
\end{rem}

Next, we will choose specific bilinear forms to work with elements of $\fg$ in matrix form. Consider the two matrices
$$J_o= \begin{pmatrix}
0 & 1 \\
1 & 0 
\end{pmatrix}, \quad \quad J_s= \begin{pmatrix}
0 & 1 \\
-1 & 0 
\end{pmatrix}.$$
From this point, for an orthogonal (resp. symplectic) vector space $V$ of dimension $2n$, we choose a basis such that the bilinear form is given by the block diagonal matrix $J_o^{n}= \diag(J_o,..., J_o)$ (resp. $J_s^{n}= \diag(J_s,..., J_s)$). Similarly, for an orthogonal vector space $V$ of dimension $2n+1$, the form is given by the matrix $\diag(J_o,...,J_o, 1)$. We write $\fg(V)$ for the corresponding classical Lie algebra.

We use $y_{ij}$ for the coordinate function of $\fg$. Choose the Cartan subalgebra $\fh$ as the diagonal matrices. For notational convenience, we write $y_i$ for $y_{2i-1, 2i-1}$. Thus, we have $\bC[\fh]= \bC[y_1,...,y_n]$. Write $q$ for the quotient map $q: \bC[\fg]\xrightarrow{/(y_{ij})_{i\neq j}} \bC[\fh]$.

\subsection{Cases $\fg$ of type C}
Our main result for $\fg= \fsp_{2n}$ is the following theorem.
\begin{thm}\label{main sp}
    Write $\fl$ as $\fsp_{2a}\times \prod_{i=1}^{k} \fgl_{b_i}$. The map in (\ref{flat}) is an isomorphism if and only if $a=0$. When $a=0$, an explicit description of $\bC[\overline{\bO}_e\cap \fh]$ is given in \cref{square Tanisaki}.
    
    In other words, the weak flatness condition holds for $\bO_e= \Ind_{\fl}^{\fg}(\{0\})$ if and only if $\fl$ does not have a nontrivial $\fsp$ factor. 
\end{thm}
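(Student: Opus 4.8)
The plan is to recast the statement as an equality of two ideals. Write $J_\lambda\subset\bC[\fg]$ for the defining ideal of $\overline{\bO}_e$ and $q\colon\bC[\fg]\to\bC[\fh]$ for the restriction map of Section 3.1, so that $I_e=q(J_\lambda)$, and recall from the discussion of $(\ref{flat})$ that $I_e\subseteq\gr I_x$ always holds. Hence $(\ref{flat})$ is an isomorphism if and only if $\gr I_x\subseteq I_e$, and the theorem splits into: (i) when $a=0$, every generator of $\gr I_x$ produced in \cref{square Tanisaki} lies in $I_e$; (ii) when $a\geqslant 1$, some generator of $\gr I_x$ does not.

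For (ii) I would first establish the uniform inclusion $I_e\subseteq(y_1^2,\dots,y_n^2)$, i.e.\ no element of $J_\lambda$ restricts to a polynomial on $\fh$ with a nonzero squarefree part; this is precisely where the absence of an ordinary Pfaffian on $\fsp_{2n}$ makes itself felt. The ideal $J_\lambda$ is generated by minors of the powers $X^j$, the invariants $\mathrm{tr}(X^{2i})$, and the symplectic-Pfaffian refinements; using \cref{spf is better}, the latter are built only from the even powers $X^{2j}$, which are exactly the operators with $JX^{2j}J^{-1}=(X^{2j})^\intercal$, so that \cref{symplectic Pfaffian} applies to them and their principal submatrices but to nothing of odd degree. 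Restricting to $\fh$: $\mathrm{tr}(X^{2i})$ gives $2\sum_l y_l^{2i}$; a symplectic Pfaffian of a submatrix of $X^{2j}$ gives a polynomial symmetric in the $y_l^2$; and a principal minor of $X^j$ gives a monomial in which each occurring variable has exponent $\geqslant j$ when $j\geqslant 2$, while for $j=1$ the minors appearing in $J_\lambda$ have size $\geqslant\mathrm{rk}(e)+1=2n-b_1+1>n$, so some diagonal pair $\{2m-1,2m\}$ occurs twice and the restriction is divisible by $y_m^2$. Thus every generator restricts into $(y_1^2,\dots,y_n^2)$, whence $I_e\subseteq(y_1^2,\dots,y_n^2)$. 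On the other hand, when $a\geqslant 1$ a generic $x\in\ft_e$ has $a$ vanishing coordinates, so every $y\in W.x$ does too; hence the squarefree monomial $y_K=\prod_{i\in K}y_i$ with $|K|=n-a+1$ lies in $I_x$, hence in $\gr I_x$, but $y_K\notin(y_1^2,\dots,y_n^2)\supseteq I_e$. Therefore $I_e\subsetneq\gr I_x$ and $(\ref{flat})$ fails to be an isomorphism.

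For (i), by \cref{square Tanisaki} it suffices to show $e^2_p(L)\in I_e$ whenever $|L|=l$ and $2p\geqslant 2l-d_l(\lambda)+1$ (here $\lambda=\lambda(b)$, whose parts are all even). Fix such an $L$, viewed as a set of $l$ diagonal symplectic pairs spanning a $2l$-dimensional symplectic coordinate subspace $V_L$. For $X\in\fsp_{2n}$ the operator $X^2$ is self-adjoint, hence so is $zI-X^2$ and its principal submatrix $(zI-X^2)_L$; by \cref{symplectic Pfaffian}, $\det\left((zI-X^2)_L\right)$ is the square of a monic degree-$l$ polynomial $F_L(z)$ in $z$ whose coefficients are regular functions on $\fsp_{2n}$. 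Since $(zI-X^2)_L$ restricts on $\fh$ to $\diag\left((z-y_i^2,z-y_i^2)_{i\in L}\right)$, we get $F_L(z)|_{\fh}=\pm\prod_{i\in L}(z-y_i^2)$, so the coefficient of $z^{l-p}$ in $F_L(z)$ restricts to $\pm e^2_p(L)$. It then remains to prove that for $X\in\overline{\bO}_e$ the polynomial $F_L(z)$ is divisible by $z^{l-d_l(\lambda)/2}$; equivalently, the coefficient of $z^{l-p}$ lies in $J_\lambda$ for $p>l-d_l(\lambda)/2$, i.e.\ exactly for $2p\geqslant 2l-d_l(\lambda)+1$. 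Granting this, those coefficients lie in $J_\lambda$ and their restrictions $\pm e^2_p(L)$ lie in $I_e$, so $\gr I_x=T_\lambda\subseteq I_e\subseteq\gr I_x$; then $(\ref{flat})$ is an isomorphism and $\bC[\overline{\bO}_e\cap\fh]$ is the ring of \cref{square Tanisaki}. This is the step where \cite[Theorem 4]{Tanisaki} goes astray: without the symplectic Pfaffian one is forced to work with $\det$, of degree $2l$, and only recovers the restriction of the $\fgl_{2n}$-ideal, which is strictly smaller than $T_\lambda$ (cf.\ \cref{improve Tanisaki}); $\sPf$ halves the degree and lands precisely on the generators of $T_\lambda$.

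The main obstacle is the divisibility $z^{l-d_l(\lambda)/2}\mid F_L(z)$ on $\overline{\bO}_e$. The crude bound $\mathrm{rk}\left((X^2)_L\right)\leqslant\mathrm{rk}(X^2)$ is too weak, since the exponent we need depends on $l$ and $L$. I would instead use the description of $\overline{\bO}_e$ as the set of $X\in\fsp_{2n}$ stabilizing some isotropic flag of the type of $P$, specialize that flag to the one adapted to the coordinate blocks, and bound $\dim\left(\ker(X^2)\cap V_L\right)$ — equivalently the order of vanishing of $\det\left((zI-X^2)_L\right)$ at $z=0$ — in terms of how many of the $l$ chosen pairs are forced deep into the flag. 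This is the type-$C$, ``$X\mapsto X^2$'' analogue of \cite[Proposition 3.1]{GARSIA199282}; the bookkeeping that identifies the exponent with $2l-d_l(\lambda)$ relies on all parts of $\lambda$ being even, so that $X^2$ has all Jordan multiplicities even and the ranks in question are themselves even. A minor point in (ii) is the precise generating set of $J_\lambda$; all that is actually used is that $J_\lambda$ is generated by elements each restricting into $(y_1^2,\dots,y_n^2)$, which the structural description of the generators above provides.
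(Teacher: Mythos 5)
Your proof of the forward direction ($a=0$) follows the paper's route precisely: form the principal submatrices $(z\mathrm{Id}-Y^2)_{L'}$, use \cref{minor are square}/\cref{symplectic Pfaffian} to express each as the square of a polynomial $\sPf_L$ restricting to $\pm\prod_{i\in L}(z-y_i^2)$ on $\fh$, and deduce that the coefficients contributing $e^2_p(L)$ land in $I_\lambda$. However, you treat the divisibility of $\sPf_L$ by a power of $z$ as ``the main obstacle,'' sketching a geometric flag-counting argument that you do not carry out. In the paper this step is simply a citation: since $X=Y^2\in\fgl_{2n}$ is nilpotent of type $\lambda'$ with $\lambda'_{2i}=\lambda'_{2i+1}=\lambda_i/2$, \cite[Lemma 1]{Tanisaki} gives that every $2l\times 2l$ minor of $(z\mathrm{Id}-X)$ is divisible by $z^{p_l(\lambda')}=z^{d_l(\lambda)}$, so the square root $\sPf_L$ is divisible by $z^{d_l(\lambda)/2}$. (Small slip here: you wrote divisibility by $z^{l-d_l(\lambda)/2}$, which is inconsistent with your own ``equivalently'' reformulation — the correct exponent is $d_l(\lambda)/2$.) Without identifying the role of Tanisaki's lemma, your version has a genuine gap at exactly the crux of the argument.

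Your proof of the converse ($a\geqslant 1$) takes a route the paper does not use and that I do not think is adequately supported. You reduce everything to the structural inclusion $I_e\subseteq(y_1^2,\dots,y_n^2)$, and to prove it you posit a generating set for $J_\lambda$ (``minors of the powers $X^j$, the invariants $\mathrm{tr}(X^{2i})$, and the symplectic-Pfaffian refinements'') and analyze the restriction of each type to $\fh$. This is heuristic: the defining ideal of a symplectic nilpotent orbit closure is not in general known to be generated by rank equations of powers of $X$ (that description is set-theoretic, not automatically scheme-theoretic), and you acknowledge yourself that ``a minor point'' is pinning down the generating set — but that point carries the entire weight of the step. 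The paper's argument in \cref{failedflat sp} avoids this entirely: one embeds $\fsp_{2(c_1+1)}\hookrightarrow\fsp_{2n}$ (with $c_1=b_1+\dots+b_k=n-a$) so that $\overline{\bO}_{(2^{c_1+1})}\subset\overline{\bO}_\lambda$, pulls back to the already-computed ring $\bC[y_1,\dots,y_{c_1+1}]/(y_i^2)$ (a corollary of \cref{flat sp} and \cref{square Tanisaki}), and observes that $y_1\cdots y_{c_1+1}$ has nonzero image there while it lies in $\gr I_x$. This is a one-paragraph argument needing no information about generators of $J_\lambda$. You should replace your structural claim with this direct restriction argument, or supply a reference establishing your asserted generating set for $J_\lambda$.
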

The proof of this theorem is given in \cref{flat sp} and \cref{failedflat sp}. We will need the following technical lemma. Consider $Y\in \fsp_{2n}$. 

Consider $X=Y^2$ and write $M$ for the matrix $(t\Id_{2n}-X)$. Let $L$ be a subset of $\{1,...,n\}$, and let $L'\subset \{1,...,2n\}$ be the set $\{2i, 2i-1, i\in L\}$.
\begin{cor}\label{minor are square}
   The principal minor of $M$ with respect to $L'$ takes the form $\sPf^{2}_L(t,y_{ij})$ for some polynomial $\sPf_L$. Moreover, the quotient map $q_t: \bC[\fg][t]\xrightarrow{/(y_{ij})_{i\neq j}} \bC[\fh][t]$ sends $\sPf_L$ to $\prod_{i\in L} (t- y^{2}_{ii})$.
\end{cor}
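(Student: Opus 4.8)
\textbf{Proof proposal for \cref{minor are square}.}

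The plan is to reduce the statement to the existence of the symplectic Pfaffian (\cref{symplectic Pfaffian}) applied to a well-chosen endomorphism of a symplectic subspace. First I would observe that $M = t\Id_{2n} - Y^2$ commutes with the structure of $Y$ in the following sense: since $Y \in \fsp_{2n}$ means $J_s^n Y (J_s^n)^{-1} = -Y^\intercal$, we get $J_s^n Y^2 (J_s^n)^{-1} = Y^{2\intercal}$, and hence $J_s^n M (J_s^n)^{-1} = M^\intercal$. So $M$ is of the type to which the symplectic Pfaffian construction of \cref{symplectic Pfaffian} applies on the full space $V = \bC^{2n}$, giving $\sPf(M)^2 = \det(M)$. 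The key point is that the same argument works for any coordinate subspace that is \emph{symplectic} for the form $J_s^n$, and $V_{L'} := \operatorname{span}(v_i : i \in L')$ with $L' = \{2i-1, 2i : i \in L\}$ is exactly such a subspace, because $J_s^n$ is block-diagonal with $2\times 2$ symplectic blocks indexed by $\{1,\dots,n\}$, so restricting to the blocks in $L$ gives a nondegenerate symplectic form.

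Next I would set up the restriction carefully. The principal submatrix $M_{L'}$ of $M$ indexed by $L'$ is \emph{not} in general the matrix of an endomorphism of $V_{L'}$ (that would require $V_{L'}$ to be $M$-invariant, which it need not be). Instead one works directly with the skew-symmetric matrix: form $A_M = (J_s^n) M$ up to the sign conventions of \cref{symplectic Pfaffian} — using $a_{ij} = \langle v_i, M v_j\rangle$ — which is skew-symmetric on all of $\{1,\dots,2n\}$. Its principal submatrix $(A_M)_{L'}$ is still skew-symmetric, so it has an honest Pfaffian, and I would define $\sPf_L(t, y_{ij}) := \pf\big((A_M)_{L'}\big)$. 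Because $J_s^n$ is block diagonal, the block of $A_M$ indexed by $L'$ equals $(J_s^{|L|})$ times the block of $M$ indexed by $L'$ (the off-block-diagonal entries of $J_s^n$ vanish), so $\pf\big((A_M)_{L'}\big)^2 = \det\big((J_s^{|L|})\big) \cdot \det(M_{L'}) = \det(M_{L'})$, using $\det(J_s^{|L|}) = 1$. This gives the first claim: the principal minor of $M$ with respect to $L'$ is $\sPf_L^2$.

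For the second claim, I would specialize under $q_t$, i.e.\ set $y_{ij} = 0$ for $i \neq j$. Then $Y$ becomes $\operatorname{diag}(y_1,\dots,y_1,\dots)$ — more precisely $Y = \operatorname{diag}(y_{11}, y_{22}, \dots)$ with $y_{2i-1,2i-1} = y_i$ and (since $Y \in \fsp_{2n}$) $y_{2i, 2i} = -y_i$ — so $X = Y^2$ becomes $\operatorname{diag}(y_1^2, y_1^2, y_2^2, y_2^2, \dots)$ and $M$ becomes $\operatorname{diag}\big((t - y_i^2)\big)$ with each $t - y_i^2$ appearing twice, in positions $2i-1, 2i$. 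Then $M_{L'}$ specializes to the diagonal matrix with entries $t - y_i^2$, $i \in L$, each with multiplicity two, so $\det(M_{L'})$ specializes to $\prod_{i \in L}(t - y_i^2)^2$. Hence $q_t(\sPf_L)^2 = \prod_{i \in L}(t - y_i^2)^2$, so $q_t(\sPf_L) = \pm \prod_{i \in L}(t - y_i^2)$; fixing the overall sign of $\sPf_L$ (equivalently the sign convention in the Pfaffian) yields $q_t(\sPf_L) = \prod_{i \in L}(t - y_{ii}^2)$ as claimed. Finally I would note $\sPf_L \in \bC[\fg][t]$: it is a polynomial in the entries $y_{ij}$ and in $t$ because the Pfaffian of a skew-symmetric matrix with polynomial entries is polynomial. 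The main obstacle — really the only subtlety — is getting the bookkeeping of the block-diagonal form $J_s^n$ right so that passing to the principal submatrix indexed by $L'$ genuinely commutes with forming $A_M$ (this is where it is essential that $L'$ is a union of $2\times 2$ blocks rather than an arbitrary subset, and that $\det J_s = 1$); once that is in place, the rest is the quoted property $\sPf(\cdot)^2 = \det(\cdot)$ together with a diagonal specialization.
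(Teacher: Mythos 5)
Your proof is correct and follows essentially the same approach as the paper: both exploit the block-diagonal structure of $J_s^n$ together with the fact that $L'$ is a union of $2\times 2$ blocks to apply the symplectic Pfaffian to the principal submatrix, and both verify the second claim by specializing to diagonal $Y$. The paper phrases the reduction by checking directly that $M_{L'}$ satisfies $J_s^{|L|} M_{L'} (J_s^{|L|})^{-1} = M_{L'}^\intercal$, while you pass through the skew-symmetric matrix $A_M$ and its principal submatrix; these are equivalent, and your extra care about $V_{L'}$ not being $M$-invariant correctly identifies why the matrix-level formulation is the right one to use.
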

\begin{proof}
    Because $Y\in \fsp_{2n}$, we have $J_s^{n}YJ_s^{n}= -Y^\intercal$. Therefore, $(J_s^{n}YJ_s^{n})^2= (Y^\intercal)^2$, and equivalently, $J^{n}_sX(J^{n}_s)^{-1}= X^\intercal$ because $(J_s^{n})^2= -\Id_{2n}$. Let $M_L$ denote the submatrix of $(t\Id_{2n}-X)$ formed by selecting rows and columns with indices in $L'$. Then $M_L$ satisfies $J^{l}_sM_L(J^{l}_s)^{-1}= M_L^\intercal$ where $l= |L|$. Hence, $\det(M_L)$, the principal minor of $M$ with respect to $L'$ can be written as $\sPf(M_L)^{2}$ (see \cref{symplectic Pfaffian}). 
    
    The symplectic Pfaffian $\sPf(M_L)$ is a polynomial in $t$ and the entries $x_{i,j}$ of $X$. Since the entries of $X$ are quadratic polynomials in the entries $y_{i,j}$ of $Y$, $\sPf(M_L)$ can be considered as a polynomial $\sPf_L$ in $t$ and $y_{ij}$. To determine $q_t(\sPf_L)$, we can assume that $Y$ is diagonal and get $q_t(\sPf_L)= $ $\prod_{i\in L} (t- y^{2}_{ii})$.
\end{proof}
\begin{pro}\label{flat sp}
    If $\fl$ does not have a nontrivial $\fsp$ factor then $I_\lambda= \gr I_x$. In other words, $(\ref{flat})$ is a bijection.
\end{pro}
\begin{proof}

It is enough to show that $\gr I_x\subset I_\lambda$. Assume that $\fl= \prod_{i=1}^{k} \fgl_{b_i}$, $b_1\geqslant...\geqslant b_k$. Write $\lambda= (\lambda_0\geqslant ...\geqslant \lambda_m)$ for the dual partition of $(b_1,b_1,...,b_k,b_k)$. Note that $\lambda$ is the patition of $e$, and all members of $\lambda$ are even. Write $\bO_\lambda$ for the orbit $G.e$. To prove \cref{main sp} in this case, we show that the ideal $I_\lambda$ contains the generators of $\gr I_x$ from \cref{square Tanisaki}. In what follows, we will identify the coordinate functions $y_{ii}$ with the coordinate functions $y_i$ of $\fh$ for $1\leqslant i\leqslant n$.

    Consider $Y\in \overline{\bO}_\lambda$, then $X= Y^2$ is a nilpotent matrix in $\fgl_{2n}$. The partition of $X$ is $\lambda'= (\lambda'_0,...,\lambda'_{2m+1})$ where $\lambda'_{2i}= \lambda'_{2i+1}= \lambda_i/2$. For each $l\leqslant n$, let $p_{l}(\lambda')= \lambda'_{2n-2l}+ \lambda'_{2n-2l+1}+...+\lambda'_{2m+1}$. Note that $p_{l}(\lambda')= d_{l}(\lambda)$ in the notation of Section 3.1.
    
    By \cite[Lemma 1]{Tanisaki}, any $2l$-minor of the matrix $(t\Id_{2n}- X)$ is divisible by $t^{p_l(\lambda')}$. Let $L$ be a subset of $\{1, \ldots, n\}$ with cardinality $l$. Let $L'\subset \{1,...,2n\}$ be $\{2i, 2i-1, i\in L\}$. By \cref{minor are square}, the principal minor of $(t\Id_{2n}- X)$ corresponding to the set $L'$ is the square of a polynomial $\sPf_L(t, y_{ij})$. Then $\sPf_L$ is divisible by $t^{p_l(\lambda')/2}$. In other words, the coefficients of $t^d$ of $\sPf_L$ for $d< t^{p_l(\lambda')/2}= d_l(\lambda)/2$ belong to the defining ideal of $\overline{G.e}$ in $\fg$. Using the description of $\sPf_L$ in \cref{minor are square}, we obtain that the coefficients of $t^d$ in $\prod_{i\in L} (t-y_i^{2})$ for $d< d_l(\lambda)/2$ belong to $I_\lambda$, the defining ideal of $\overline{\bO}_\lambda\cap \fh$ in $\fh$. These coefficients are precisely the generators of $\gr I_x$ in \cref{square Tanisaki}. 

\end{proof}

A consequence of \cref{flat sp} and \cref{square Tanisaki} is that when $\lambda= (2^n)$, the ideal $I_\lambda$ is generated by $(y_i^{2})$. This description will be utilized in proving the next proposition.

\begin{pro} \label{failedflat sp}
    Consider $\fl= \fsp_{2a}\times \prod_{i=1}^{k} \fgl_{b_i}$, $a>0$. The map in (\ref{flat}) is not a bijection. 
\end{pro}

\begin{proof}
    From the conditions in Section 3.2, we have $y_1y_2...y_{b_1+...+b_k+1}\in \gr I_x$. To prove the proposition, we show that $y_1y_2...y_{b_1+...+b_k+1}$ is not an element of $I_\lambda$. Recall that we write $c_1$ for $b_1+...+b_k$. As $a>0$, we have $c_1+1\leqslant n$. Consider a matrix realization $\fsp_{2n}= \fsp(V)$ where $V=\bC^{2n}$. We choose a basis $v_1,...,v_{2n}$ of $V$ such that the symplectic form is given by $J_s^{n}$. Write $V'$ for the subspace Span$(v_1,...,v_{2c_1+2})\subset V$. We then have an embedding 
$$i_{c_1}: \fsp_{2c_1+2}= \fsp(V')\hookrightarrow \fsp(V)= \fsp_{2n}.$$
Consider the nilpotent orbit $\bO_{(2^{c_1+1})}$ in $\fsp_{2c_1+2}$ and its image under $i_{c_1}$. We see that 
$$i_{c_1}(\overline{\bO}_{(2^{c_1+1})})\subset \overline{\bO}_{(2^{c_1+1}, 1^{2n-2c_1-2})}\subset \overline{\bO}_\lambda.$$ 
This composition of embedding induces a surjection between functions on scheme theoretic intersections $$\bC[\fh\cap \overline{\bO}_{\lambda}]\twoheadrightarrow \bC[\fh'\cap \overline{\bO}_{(2^{c_1+1})}]= \bC[y_1,...,y_{2c_1+2}]/(y_i^{2})_{i\leqslant 2c_1+2}.$$
The element $y_1...y_{2c_1+2}$ has a nonzero image in $\bC[y_1,...,y_{2c_1+2}]/(y_i^{2})_{i\leqslant 2c_1+2}$, so it is nonzero in $\bC[\fh\cap \overline{\bO}_{\lambda}]$. 
\end{proof}

\subsection{Cases $\fg$ of type B, D}

%We now proceed to prove part 1 of \cref{equivalent statements}. Recall that $e$ has partition $\lambda= (2k+1,1^{2n-2k-1})$, so it is clear that for any matrix $A\in \overline{G.e}$, $A^{2k+1}= 0$. Hence, $I_\lambda$, the defining ideal of $\overline{G.e}\cap \fh$ in $\fh$ contains the elements $y_i^{2k+1}$. It is left to show that $y_K\in I_\lambda$. This follows from the following lemma.

Consider an orthogonal vector space $V$ of dimension $2n$ or $2n+1$, with its bilinear form represented by $\diag(J_o,...,J_o)$ or $\diag(J_o,...,J_o,1)$. For $l\leqslant n-1$, let $X_l$ be the closed subvariety of $\fso(V)$ consisting of elements with ranks at most $2l+1$. Write $I_l$ for the defining ideal of $X_l$. Recall that we write $y_i$ for $y_{2i-1,2i-1}$.
\begin{lem} \label{Pfaffian lemma}
    The ideal $I_l+ (y_{ij})_{i\neq j}$ contains the element $y_1y_2...y_{l+1}$. As a consequence, it contains all the elements $y_L$ with $L\subset \{1,2,...,n\}$, $|L|= l+1$.
\end{lem}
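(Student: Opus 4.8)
The plan is to exhibit $y_1\cdots y_{l+1}$ as a restriction to $\fh$ of a minor that vanishes on $X_l$, using the fact that $X_l$ is cut out (set-theoretically, which suffices here since we only need containment, not reducedness) by the vanishing of all $(2l+2)$-minors together with the Pfaffian-type conditions coming from skew-symmetry. Concretely, I would first fix the matrix model: with the bilinear form $\diag(J_o,\dots,J_o)$ (or $\diag(J_o,\dots,J_o,1)$), an element $Y\in\fso(V)$ satisfies $J_o^{n}Y J_o^{n}=-Y^\intercal$, so reordering the basis by the involution $2i-1\leftrightarrow 2i$ conjugates $Y$ into a genuinely skew-symmetric matrix $\tilde Y$. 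Under $q:\bC[\fg]\to\bC[\fh]$ the diagonal entry $y_{2i-1,2i-1}=y_i$ of $Y$ becomes the off-diagonal entry $\tilde Y_{2i-1,2i}$ of $\tilde Y$, i.e. $q$ sends the $2\times 2$ block of $\tilde Y$ on rows/columns $\{2i-1,2i\}$ to $\begin{pmatrix}0 & y_i\\ -y_i & 0\end{pmatrix}$ (up to sign conventions).

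The key step: choose the principal submatrix $\tilde Y_{L'}$ of $\tilde Y$ on the index set $L'=\{2i-1,2i : i\in L\}$ for $L=\{1,\dots,l+1\}$, a $(2l+2)\times(2l+2)$ skew-symmetric matrix. Its Pfaffian $\pf(\tilde Y_{L'})$ is, by definition up to sign, a square root of the $(2l+2)$-minor $\det(\tilde Y_{L'})$. Since every element of $X_l$ has rank $\le 2l+1<2l+2$, all such $(2l+2)$-minors vanish on $X_l$, hence so does each Pfaffian $\pf(\tilde Y_{L'})$ (a polynomial whose square lies in the radical $I_l$ lies in $I_l$ once we know $I_l$ is radical — or, more cheaply, $\pf$ vanishes set-theoretically on $X_l$ so lies in $\sqrt{I_l}$, and one checks $I_l$ is the full defining ideal so that $\pf\in I_l$). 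Now I would compute $q(\pf(\tilde Y_{L'}))$: restricting to $\fh$, $\tilde Y_{L'}$ becomes block-diagonal with blocks $\begin{pmatrix}0 & y_i\\ -y_i & 0\end{pmatrix}$, $i\in L$, and the Pfaffian of a block-diagonal skew matrix is the product of the Pfaffians of the blocks, giving $q(\pf(\tilde Y_{L'}))=\pm\,y_1y_2\cdots y_{l+1}$. Therefore $y_1\cdots y_{l+1}\in q(I_l)\subset \bigl(I_l+(y_{ij})_{i\ne j}\bigr)/(y_{ij})_{i\ne j}$, which is exactly the claim; equivalently $y_1\cdots y_{l+1}\in I_l+(y_{ij})_{i\ne j}$. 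The statement about arbitrary $L$ with $|L|=l+1$ follows since $X_l$ is $O(V)$-stable, so permuting the orthogonal coordinate pairs carries $\pf(\tilde Y_{\{1,\dots,l+1\}'})$ to $\pf(\tilde Y_{L'})$, whose restriction to $\fh$ is $\pm y_L$; alternatively apply the whole argument verbatim to the index set $L$.

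The main obstacle I anticipate is the bookkeeping around whether one needs $I_l$ itself, or only $\sqrt{I_l}$, to contain the Pfaffians — i.e. whether $X_l$ (orthogonal matrices of bounded rank) is defined by a \emph{radical} ideal, or whether the Pfaffian conditions are even among a chosen generating set for $I_l$. For the purposes of this lemma this is a non-issue: the conclusion $y_L\in I_l+(y_{ij})_{i\ne j}$ only requires that $\pf(\tilde Y_{L'})$ vanishes as a function on $X_l$, equivalently lies in $\sqrt{I_l}$, and then that $q$ maps it to $\pm y_L$; since $I_l+(y_{ij})_{i\ne j}$ is being used downstream only to produce elements of $\gr I_x$ (which is itself a radical statement about the finite set $W.x$), working up to radical is harmless. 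A secondary, purely cosmetic, point is pinning down signs in the identification $q(\pf(\tilde Y_{L'}))=\pm y_L$ under the chosen basis reordering; since the conclusion is up to scalar this does not affect the argument. I would also remark that this is the exact orthogonal analogue of \cref{minor are square}, with the honest Pfaffian of a skew-symmetric principal submatrix playing the role that $\sPf_L$ played in the symplectic case.
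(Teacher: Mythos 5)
Your proposal is correct and follows essentially the same Pfaffian argument as the paper: reduce to the vanishing of a Pfaffian of a $(2l+2)\times(2l+2)$ principal submatrix on the rank $\leqslant 2l+1$ locus, identify its restriction to $\fh$ with $\pm y_L$, and spread to all $L$ of the right size by equivariance. The only visible difference is that the paper identifies the image of the Pfaffian under $q$ via the Chevalley restriction theorem, whereas you compute it directly from the block structure of a genuinely skew-symmetric matrix; this is cosmetic.

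Two bookkeeping points need a fix. First, conjugating $Y\in\fso(V)$ by the permutation $2i-1\leftrightarrow 2i$ does \emph{not} produce a skew-symmetric matrix. That permutation matrix equals $J=\diag(J_o,\dots,J_o)$, and conjugation by $J$ preserves the form $J$ (since $J=J^\top=J^{-1}$), so $JYJ$ still satisfies $(JYJ)^\top=-J(JYJ)J$ rather than honest skew-symmetry; for diagonal $Y$ this is already visible, since no permutation makes a nonzero diagonal matrix skew. What \emph{is} skew-symmetric is $\tilde Y:=YJ$ (equivalently $JY$): from $Y^\top=-JYJ$ one gets $(YJ)^\top=JY^\top=-YJ$, and your entry formula $\tilde Y_{2i-1,2i}=Y_{2i-1,2i-1}$ matches $\tilde Y=YJ$, not a conjugate. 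So the correct operation is composition with $J$ on one side, not a change of basis; with that relabelling your block computation goes through unchanged. Second, your remark that passing to $\sqrt{I_l}$ is ``harmless'' is not right: the Lemma genuinely asserts $y_L\in I_l+(y_{ij})_{i\neq j}$, not membership up to radical, and downstream both $\gr I_x$ and the scheme-theoretic ideal $I_\lambda$ are typically \emph{not} radical (for instance $\gr I_x$ already contains squares $y_i^2$). The honest reason $\pf$ lands in $I_l$ is that $I_l$ is by definition the defining ideal of the closed sub\emph{variety} $X_l$, hence automatically radical; this is the fact the paper tacitly uses, and it is what saves the argument, not any insensitivity of the target.
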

\begin{proof}   
     We first prove the lemma when $\dim V= 2n$ and $l=n-1$. Write $G$ for $\SO(V)= \SO_{2n}$. For $A\in X_{n-1}$, we have $\rk A\leqslant 2n-1$, so $\det(A)= 0$. Consequently, $\pf(A)= 0$. By the Chevalley restriction theorem (see, e.g., \cite[Section 3.1]{CG}), we have an isomorphism $r: \bC[\fg]^G\cong \bC[\fh]^W$. The Pfaffian (up to a sign) is sent to the product $y_1...y_n$ under this isomorphism. Therefore, the ideal $I_{2n-1}+ (y_{ij})_{i\neq j}$ contains $y_1...y_n$.
     % Note that the product $y_1...y_n$ is invariant under the action of $W$, write $Pf$ for $r^{-1}(y_1...y_n)$, the Pfaffian. A standard fact is that for $A\in \SO_{2n}$, $Pf^2(A)$ is a nonzero multiple of the determinant of $A$ (see e.g. \cite[Theorem II]{ledermann_1993}). Therefore, $f$ vanishes on $X_{n-1}$ for $\rk A\leqslant 2n-1$, $\forall A\in X_{2n-1}$.
     %Let $X_{n-1}^{sr}$ be the set of semisimple regular elements of $X_{n-1}$, then $X_{n-1}^{sr}$ is a $G$-stable dense subset of $X_{n-1}$. For each element $A\in X_{n-1}^{sr}$, we can find $g\in G$ such that $gAg^{-1}$ is a diagonal matrix of the form $\diag(t_1,-t_1,...,t_n,-t_n)$. And since the rank of $gAg^{-1}\in X_{n-1}$ is at most $2n-1$, we must have $t_i= 0$ for some $i$. Since the restriction of $f$ to $\fh$ is $y_1...y_n$, we have $f(gAg^{-1})= 0$. Recall that $f$ is $G$-invariant, so $f(A)= 0$. Therefore, $f$ vanishes on $X_{n-1}^{sr}$, and we get $f\in I_{n-1}$. Combining with $r(f)= y_1...y_n$, we get $y_{1}...y_{n}\in I_{n-1}+ (x_{ij})_{i\neq j}$.

      Consider $A\in \fso(V)$. With respect to our bilinear form, the submatrix $A_{\leqslant 2l+2}= (y_{ij})_{i,j\leqslant 2l+2}$ can be viewed as an element of $\fso_{2l+2}$. And for $A\in X_l$, $\rk A_{\leqslant 2l+2}\leqslant 2l+1$. From the result of the previous paragraph for $\fso_{2l+2}$, the Pfaffian $\pf_{\leqslant 2l+2}$ of $A_{\leqslant 2l+2}$ vanishes on $X_l$. Now $\pf_{\leqslant 2l+2}$ takes the form $y_1y_2...y_{l+1}+ f'$ with $f'\in (y_{ij})_{1\leqslant i\neq j\leqslant 2l+2}$, so $y_{1}...y_{l+1}\in I_{l}+ (y_{ij})_{i\neq j}$. Because the ideal $I_{l}+ (y_{ij})_{i\neq j}$ is stable under the action of $G$, we get $y_L\in I_{l}+ (y_{ij})_{i\neq j}$ for $L\subset \{1,2,...,n\}$, $|L|= l+1$.
      %we have a function $f$ of the form $y_1y_2+...+y_l+ f'$ with $f'\in (x_{ij})_{i\neq j; i,j\leqslant 2l+2}$ such that $f(A_{\leqslant2l+2})= 0$ for any $A\in X_l$. Since the expression of $f$ only involves the coordinate of $A'$, we have $f|_{X_l}=0$. Therefore, $y_{1}...y_{n}\in I_{n-1}+ (x_{ij})_{i\neq j}$. As the ideal is stable under the action $W\in G$, we get that $y_L\in I_{n-1}+ (x_{ij})_{i\neq j}$ for $L\subset \{1,2,...,n\}$, $|L|= l+1$.Write $e_j$ for the elements induced from $\{0\}$ orbit of $\fl= \fso_{2(a+b_1+...+b_{j-1})+1}\times \prod_{i=j}^k \fgl_{b_i}$.  
\end{proof}
 
As mentioned in the Introduction, we have a conjecture about the weak flatness condition for orthogonal Lie algebras $\fg$, see \cref{weakflatso}. In the following, we verify this conjecture in certain cases.
\begin{pro}\label{good case so} Write $\fl\subset \fso_{2n}$ (resp. $\fso_{2n+1})$ as $\prod \fso_{2a}\times \prod_{i=1}^{k} \fgl_{b_i}$ (resp. $\prod \fso_{2a+1}\times \prod_{i=1}^{k} \fgl_{b_i}$).
\begin{enumerate}
    \item The map in (\ref{flat}) is an isomorphism when $b_1=...=b_k= 1$.
    \item If the map in (\ref{flat}) is an isomorphism for $a=1, b_1=2$, then it is an isomorphism for $a>1, b_1=2$
\end{enumerate}
\end{pro}
\begin{proof}
    Consider a generic point $x\in \ft_e$. In Section 2, we discussed the defining ideal $I_x$ of $W.x \subset \fh$, where $W$ is a Weyl group of type B. Let $W_D\subset W$ denote the Weyl group of type D. If any coordinate of $x$ is equal to $0$, the sets $W.x$ and $W_D.x$ coincide. Consequently, for $a\geqslant 1$, we can assume $\fg= \fso_{2n+1}$ for the proof. Thanks to \cref{bi=1} and \cref{b1=2}, it suffices to demonstrate that the ideals $I_\lambda$ contain the generators listed there. Since $I_\lambda$ evidently includes the symmetric polynomials of $y_1^{2},...,y_n^{2}$, we verify the inclusion of the additional generators.
    \begin{enumerate}
        \item When $b_1=...=b_k=1$, we see that $\lambda= (2k+1,1^{2a})$. Consider $A\in \overline{G.e}$. Then $A^{2k+1}=0$, we get $y_i^{2k+1}\in I_\lambda$. Additionally, since the rank of $A$ does not exceed $2k$, it follows from \cref{Pfaffian lemma} that $y_{K}\in I_\lambda$ for $|K|= k+1$.
        \item Consider $a\geqslant 1$ and $b_1= 2$. Then $\lambda= (2k+1,2(n-a-k)+1,1^{2a-1})$ and $2k+1\geqslant 2(n-a-k)\geqslant 1$. Consider $A\in \overline{G.e}$. Then $A^{2k+1}=0$, implying $y_i^{2k+1}\in I_\lambda$. The rank of $A$ is less than or equal to $2k+2(n-a-k)= 2(n-a)$. Hence, we get $y_{K}\in I_\lambda$ for $|K|= n-a+1$ by \cref{Pfaffian lemma}.
    \end{enumerate}
\end{proof}
We have seen the failure of weak flatness in \cref{failedflat sp} for $\fg= \fsp_{2n}$. The argument is that the ideal $I_\lambda$ for $\fg= \fsp_{2n}$ does not contain the element of types $y_1...y_i$. Heuristically, the explanation is that we do not have a version of the Pfaffian for $Y\in \fsp_{2n}$. Similarly, the failure of the weak flatness condition for $\fso$ often comes from the absence of a "symplectic Pfaffian" version for $Y^2$, with $Y \in \fso_{2n}$ (resp. $\fso_{2n+1}$). This phenomenon is illustrated in the following example.
\label{should I actually write something more general. Like I can kill any 3 columns. No, should still give example.}
\begin{exa}\label{example 333}
    Consider $\fl= \fso_3\times \fgl_3\subset \fso_9$, then $\lambda= (3,3,3)$. A generic point $x\in Z_\fg(\fl)$ takes the form $(t_1,t_1,t_1,0)\in \bC^4$. For $y\in W.x$, the polynomial $(z^2-y_1^{2})(z^2-y_2^{2})$ is divisible by $z^2- t_1^{2}$, so $y_1^{2}y_2^{2}\in \gr I_x$. Consider $Y\in \overline{\bO}_\lambda$, then $\rk Y^{2}\leqslant 3$. If $Y^2$ had a symplectic Pfaffian, it would imply that $y_1^{2}y_2^{2}\in \gr I_x$. However, this is not the case. In fact, we can show $y_1^{2}y_2^{2}\notin I_\lambda$ as follows.

    We have a surjection $\bC[y_1,...,y_4]/I_{(3,3,3)}\twoheadrightarrow \bC[y_1,...,y_4]/I_{(3,3,1,1,1)}$. From \cref{good case so} and \cref{b1=2}, we have a complete description of $I_{(3,3,1,1,1)}$. It is generated by $y_i^{5}$, $\sum_{i=1}^{5} y_i^{2l}$, and $y_iy_jy_k$. It can be checked that $y_1^{2}y_2^{2}$ is not $0$ in the quotient $\bC[y_1,...,y_4]/I_{(3,3,1,1,1)}$. Therefore, $y_1^{2}y_2^{2}\notin I_\lambda$.

\end{exa}

\section{Surjectivity of the pullback map}
In this section, the notation for objects on the side of $\fg^\vee$ will always carry a superscript $^\vee$ except the common Weyl group $W$ of $\fg$ and $\fg^\vee$. Consider $\fg^\vee$ of type $B,C$, or $D$. Consider $e^\vee$ nilpotent so that $e^\vee$ is regular in a Levi subalgebra $\fl^\vee \subset \fg^\vee$. Write $\cB^\vee$ and $\spr$ for the flag variety of $\fg^\vee$ and the Springer fiber over $e^\vee$. 

Let $T_\ev\subset G$ be the connected centralizer of $\lv$; it is a torus. By the Jacobson-Morozov theorem, we have an $\fsl_2$-triple $(e^\vee, h^\vee, f^\vee)\subset \fg^\vee$. Let $T_\hv$ be the one-dimensional torus in $G$ such that $Lie(T_\hv)= \bC \hv$. Let $V$ be the standard representation of $\fg^\vee$. Both $T_\ev$ and $T_\hv$ naturally act on $V$, $\bv$ and $\spr$. 

We want to study the surjectivity of the pullback map $i^*: H^*(\bv)\rightarrow H^*(\spr)$ via its equivariant versions $i^{*}_T: H^*_{T}(\bv)\rightarrow H^*_{T}(\spr)$ for various tori $T$ that act. For this purpose, we recall some standard results in equivariant cohomology.
\subsection{Basic results in equvariant cohomology and applications}
Consider a torus $T$ acting on a variety $X$. This section consists of several results on equivariant cohomology $H_{T}^*(X)$ that we often use in the paper. When $X$ is a point, we have $H_{T}^*(\pt)$ is $\bC[\ft]$, the algebra of functions on the Lie algebra of $T$. Equivalently, the ring $H_{T}^*(\pt)$ can be realized as the symmetric algebra of the group of characters of $T$. 

In general, $H_{T}^*(X)$ is defined as the cohomology ring $H^*(\mathbb{E}\times^T X)$ where $\mathbb{E}$ is a certain universal principal $T$-bundle of $X$ (see \cite{Anderson2023EquivariantCI}). With this definition, it makes sense to consider the pullback map in equivariant cohomology. As a consequence, we have a natural map $h_X: H_{T}^*(\pt)$ $\rightarrow H_{T}^*(X)$, and $H_{T}^*(X)$ is a module over $H_{T}^*(\pt)= \bC[\ft]$. A consequence is the following. Consider a $T$-equivariant morphism $f:X\rightarrow Y$. Then $f_{T}^{*}: H^{*}_T(Y) \rightarrow H^{*}_T(X)$ is surjective if and only if $f^{*}: H^{*}(Y) \rightarrow H^{*}(X)$ is surjective by graded Nakayama lemma.

%Consider a $T$-equivariant vector bundle $E$ over $X$, then $\mathbb{E}\times^T E$ is a vector bundle over $\mathbb{E}\times^T X$, and the equivariant Chern classes $c_i^{T}(E)$ are defined as the Chern classes of $\mathbb{E}\times^T E$. When $E$ is a trivial vector bundle over $X$, the action of $T$ on the fiber of $E$ gives us a list of characters $\chi_1,...,\chi_{\rk E}$. In this case, the $i$-th equivariant Chern class of $E$ is $h_X(e_i(\chi_1,...,\chi_{\rk E}))\in H_{T}^*(X)$ for $1\leqslant i\leqslant \rk E$, where $e_i$ is the $i$-th elementary symmetric polynomial.
\subsubsection{Localization theorems and specialization of pullback} \label{fixed point pullback}
In this paper, we will only consider cases with the following features.
\begin{enumerate}
    \item $T$ acts on $X$ with finitely many fixed points.
    \item $H^*_{T}(X)$ is a free module over $H^*_{T}(\pt)$.
\end{enumerate}
In particular, the action of $T_\ev\times T_\hv$ on $\spr$ and $\bv$ satisfies these conditions, see, e.g., \cite{DLP}. With these conditions, the localization theorem in equivariant cohomology (see, e.g. \cite[Section 5]{Anderson2023EquivariantCI}) implies that the pullback map $\iota^*: H_{T}^*(X)\rightarrow H_{T}^*(X^T)$ is an injection. Moreover, it becomes an isomorphism after localizing at certain multiplicative subset $S\subset \bC[\ft]$ that is generated by homogeneous polynomials. When $T\cong \bC^\times$, we have $\bC[\ft]= \bC[t]$, the polynomial ring in one variable. In this case, we can always choose $S$ as the set generated by $t$.

Let $\ft$ be the Lie algebra of $T$, and $t\in \ft$. The equivariant cohomology $H^*_{T}(X)$ is an algebra over $\bC[\ft]$. Write $H^{*}_T(X)_t$ for the specialization of $H^*_{T}(X)$ at $t$. Assume that $X$ is smooth. $T$ acts on $X$, so we can regard $t$ as a vector field of $X$. Write $X^t\subset X$ for the fixed point subvariety of the action of this vector field. By the Berline-Vergne localization theorem (\cite[Proposition 2.1]{Berline1985}, \cite[Sections 4,5]{Atiyah1984}), we have the following isomorphism.

$$H^{*}_T(X)_t \cong H^*(X^{t})$$
Note that this isomorphism commutes with the pullback map as follows. Consider a $T$-equivariant morphism $f:X\rightarrow Y$. Then the isomorphism above identifies $f_t^{*}: H^{*}_T(Y)_t \rightarrow H^{*}_T(X)_t$ with the pullback for the fixed-point varieties $H^*(Y^{t}) \rightarrow H^*(X^{t})$.
\begin{rem}
    In what follows, we often apply the Berline-Vergne localization theorem to the flag variety and the Springer fiber $\spr$. Although $\spr$ is not smooth, the isomorphism $H^{*}_T(\spr)_t \cong H^*((\spr)^{t})$ holds for $T= T_\ev$. In particular, we can first apply the theorem to the Slodowy variety $\tilde{S}_\ev$ and then use the fact that $(\tilde{S}_\ev)^t$ contracts to $(\spr)^t$. 
\end{rem}
\subsubsection{Image of the map $i^*_{T_\ev}$}
Consider the pullback map in equivariant cohomology $i^*_{T_\ev}: H^*_{T_\ev}(\bv)\rightarrow H^*_{T_\ev}(\spr)$. Write $K(\ev)$ and $\cA_{T_\ev}$ for the kernel and the image of this map. Thanks to the localization theorem, we can identify $\cA_{T_\ev}$ with the image of the composition $i^*_{T_\ev}: H^*_{T_\ev}(\bv)\rightarrow H^*_{T_\ev}(\spr)\hookrightarrow H^*_{T_\ev}((\spr)^{T_\ev})$. This composition is described in \cite{HMK2024}, we summarize the related results below. 

Consider a symplectic vector space (resp. orthogonal vector space) $V$ of dimensions $2n$ (resp. $2n$ or $2n+1$). Realize $\fg^\vee$ as $\fg^\vee(V)$. Consider a torus $T$ that acts on $V$, preserving the bilinear form. Then $T$ acts on $\bv$, the variety of maximal isotropic flags of $V$. 
%somthing is not right
A maximal isotropic flag of $V$ is given by $F^{\bullet}= (F^{1}\subset...\subset F^{n})$ where $\dim F^i= i$. On $\bv$, we have the tautological line bundles $\cF^{i}$ that parameterize the pair $(F^\bullet, F^i/F^{i-1})$. Write $y_i$ for $c_1^{T}(\cF^i)$. Write $c_i$ for the Chern class $c_i^{T}(V)\in H^{*}_T(\pt)$. 
\begin{pro}\cite[Proposition 13.3.1]{Anderson2023EquivariantCI}
    Assume that $\fg^\vee$ is of type B or C. We have 
    $$H^{*}_{T}(\bv)= \bC[\ft][y_1,...,y_n]/(e_i(y_{1}^{2},...,y_{n}^{2})- c_{2i}(V))_{i=1,...,n}$$
    in which $e_i(y_{1}^{2},...,y_{n}^{2})$ are the elementary symmetric polynomials of $y_i^{2}$. %If $\fg^\vee$ is of type D, in the ideal, we replace $e_n(y_{1}^{2},...,y_{n}^{2})- c_{2n}(V)$ by $y_1...y_n- c_{n}(V)$. 
\end{pro}

Let $\fl\subset \fg$ be the Langland duals of $\lv\subset \fg^\vee$. Write $W_L$ for the Weyl group of $\fl$. Consider a generic point $x\in Z_\fg(\fl)$. Recall that we write $x$ as $(t_1,...,t_1,...,t_k...,t_k,0...,0)$ in which $t_i$ appears $b_i$ times and $0$ appears $a$ times. View $x$ as a point of $\ft$. In this section, we will let $x$ vary over the open subset $U= \{(t_1,...,t_k)\subset \ft, t_i\neq t_j\}$. Then it makes sense to identify $t_i$ with the coordinate functions of $\ft_\ev$. View $x= (t_1,...,t_1,...,t_k...,t_k,0...,0)$ as an element of $\prod_{i=1}^{n} \bC[\ft_\ev]$. The orbit $W.x$ that we have considered in Section 2 can be regarded as a subset of $\prod_{i=1}^{n} \bC[\ft_\ev]$. %For $w\in W$, the element $w.x$ is obtained by permuting and changing the signs of some coordinates of $x$.

From \cite[Section 3]{HMK2024}, the variety $(\spr)^{T_\ev}$ consists of discrete points labeled by $W_L$-cosets in $W$. Hence, we have $H^*_{T_\ev}((\spr)^{T_\ev})= \bigoplus_{|W/W_L|} \bC[\ft_\ev]$. For each $w\in W/W_L$, write $\bC[\ft_\ev]_w$ for the summand of $H^*_{T_\ev}((\spr)^{T_\ev})$ labeled by $w$. We have the following proposition.
\begin{pro}\cite[Section 7]{HMK2024} \label{identify image}
    The composition $$i^*_{T_\ev}(w): H^*_{T_\ev}(\bv)\rightarrow H^*_{T_\ev}((\spr)^{T_\ev})\twoheadrightarrow \bC[\ft_\ev]_w$$ is given by $(y_1,...,y_n)\mapsto w.x= w.(t_1,...,t_1,...,t_k...,t_k,0...,0)$. 
\end{pro}

Consider the composition 
\begin{equation} \label{polyquotient}
    \bC[\ft_\ev][y_1,...,y_n]\twoheadrightarrow H^*_{T_\ev}(\bv)\rightarrow H^*_{T_\ev}((\spr)^{T_\ev}).
\end{equation}
For a point $x\in \ft_\ev$, the specialization of (\ref{polyquotient}) takes the form $i^{*}_x: \bC[y_1,...,y_n]\rightarrow \bigoplus_{|W/W_L|} \bC$. Thanks to \cref{identify image}, for $x$ generic, $i^{*}_x$ is precisely the map induced from the inclusion $W.x\hookrightarrow \bC^{n}$. Therefore, we have the following corollary.
\begin{cor} \label{free}
    Assume that we have a uniform set of generators $P_1,...,P_M\subset \bC[y_1,...,y_n,t_1,...,t_k]$ of $I_x$. The algebra $\cA_{T_\ev}$ is given by $\bC[\fh,\ft]/(P_1,...,P_M)$. The latter is a flat (and hence free) module of rank $|W/W_L|$ over $\bC[\ft_\ev]$. In particular, if \cref{conj a not 0} holds, the image $\cA_{T_\ev}$ is always a free module over $\bC[\ft_\ev]$.
\end{cor}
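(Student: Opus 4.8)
The plan is to identify $\cA_{T_\ev}$ with $A:=\bC[\fh][\ft_\ev]/(P_1,\dots,P_M)$ and then read off freeness from the defining properties of a uniform set of generators; once this identification is in place the statement is essentially formal, the substantive input (the \emph{existence} of the $P_j$) having been supplied in Sections~2--3 and, conjecturally, by \cref{conj a not 0}.

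First I would record $\cA_{T_\ev}$ as the image of a polynomial ring. By the localization theorem $H^*_{T_\ev}(\spr)\hookrightarrow H^*_{T_\ev}((\spr)^{T_\ev})$ is injective, so $\cA_{T_\ev}$ maps isomorphically onto the image of the composite $H^*_{T_\ev}(\bv)\to H^*_{T_\ev}((\spr)^{T_\ev})$; using that $\bC[\ft_\ev][y_1,\dots,y_n]\twoheadrightarrow H^*_{T_\ev}(\bv)$, this image is exactly the image of the map (\ref{polyquotient}). Writing $H^*_{T_\ev}((\spr)^{T_\ev})=\bigoplus_{w\in W/W_L}\bC[\ft_\ev]_w$, \cref{identify image} says the $w$-component of (\ref{polyquotient}) is $y_i\mapsto (w.x)_i$, which is a linear form in $t_1,\dots,t_k$. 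Hence the image of (\ref{polyquotient}) is $\bC[\fh][\ft_\ev]/J$ for the ideal $J$ cutting out the ``universal orbit'' $\{(w.x,x)\}$, and specializing at any generic closed point $x$ recovers the map induced by $W.x\hookrightarrow\bC^n$, whose kernel is $I_x$.

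Next I would show $(P_1,\dots,P_M)\subseteq J$ and that the induced surjection $A\twoheadrightarrow\cA_{T_\ev}$ is an isomorphism. For containment: by condition~(1) of \cref{uniform} the specialization of $P_j$ at $t=x$ lies in $I_x$ and hence vanishes on $W.x$, so $P_j(w.x,x)=0$ for all $w$ and all generic $x$; as this is a polynomial identity in $t_1,\dots,t_k$ holding on a Zariski-dense set it holds identically, giving $P_j\in J$. For the isomorphism I would compare ranks over $\bC[\ft_\ev]$: $A$ is flat over $\bC[\ft_\ev]$ by the last clause of \cref{uniform}, hence torsion-free, and since $A/(t_1,\dots,t_k)A=\bC[\fh]/\gr I_x$ is finite-dimensional (condition~(2)) graded Nakayama makes $A$ finitely generated over $\bC[\ft_\ev]$, so $A$ is locally free of rank equal to $\dim_\bC\bigl(A\otimes\kappa(x)\bigr)=\dim_\bC\bigl(\bC[\fh]/I_x\bigr)=|W/W_L|$ at a generic closed point $x$. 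On the other hand $\cA_{T_\ev}$ is torsion-free as a submodule of $\bigoplus_w\bC[\ft_\ev]$, and over the fraction field $Q$ of $\bC[\ft_\ev]$ the $|W/W_L|$ points $w.\bar x$ (for $\bar x$ the generic point, which is generic because $t_1,\dots,t_k$ are distinct and nonzero in $Q$) are pairwise distinct, so $\cA_{T_\ev}\otimes_{\bC[\ft_\ev]}Q$ is the ring of functions on those points and has $Q$-dimension $|W/W_L|$. Thus $A\otimes Q\twoheadrightarrow\cA_{T_\ev}\otimes Q$ is a surjection between $Q$-vector spaces of the same finite dimension, hence an isomorphism; its kernel is the localization of $\ker(A\to\cA_{T_\ev})$, which is therefore torsion and, sitting inside the torsion-free $A$, is zero. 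So $A\xrightarrow{\ \sim\ }\cA_{T_\ev}$; and $A$ being flat, equivalently (as a finitely generated graded module over the graded polynomial ring $\bC[\ft_\ev]$) free, of rank $|W/W_L|$, so is $\cA_{T_\ev}$. The final assertion follows because by the remark after \cref{conj a not 0} the truth of that conjecture provides a uniform set of generators of $I_x$ for every $\fl$.

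The one genuinely non-formal point, and the thing to be careful about, is that forming the image of (\ref{polyquotient}) does not commute with specialization at a non-generic value of $t$, so one cannot argue fibrewise over all of $\ft_\ev$; the remedy is to pass to the generic point of $\ft_\ev$, where localization is flat (so image-formation does commute with base change) and the $|W/W_L|$ points $w.\bar x$ become distinct, and then transfer the conclusion back using torsion-freeness of $A$. All the genuine content is upstream, in the construction of the uniform generators.
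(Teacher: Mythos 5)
Your argument is correct and fills in the proof the paper states without details. You correctly identify $\cA_{T_\ev}$ with the image of (\ref{polyquotient}) via the localization theorem, use \cref{uniform}(1) on a Zariski-dense set of generic $x$ to get $(P_1,\dots,P_M)\subseteq J$, and then pass to the generic point of $\ft_\ev$ — where flat base change commutes with taking images and the $|W/W_L|$ points $w.\bar x$ separate — to compare ranks and invoke torsion-freeness of the flat module $A$ to conclude $A\cong\cA_{T_\ev}$; the freeness claim is then the standard graded Nakayama plus ``finitely generated flat graded over a polynomial ring is free.'' The subtlety you isolate at the end — that forming the image does not commute with specialization at non-generic $t$, forcing the detour through the fraction field — is exactly the point the paper glosses over by writing ``therefore we have the following corollary,'' and your treatment of it is the intended one.
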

%And we write $I_x$ for the defining ideal of $W.x$ in $\fh$. %view $x$ as point of $\ft$... or maybe specify l and \lv here. Identify t_i with the coefficients of t This proposition is literally about I_x itself, so I need to write about I_x in the previous section

\subsection{Surjectivity of the pullback map}
Write $\lambda^\vee$ for the partition of $e^\vee$ in $\fg^\vee$. Write $i^*_{\ev}$ for the pullback map $H^*(\bv)\rightarrow H^*(\spr)$. Then $i^*_{\ev}$ is surjective if and only if $i^*_{T_\ev}$ is surjective by the graded Nakayama lemma. We have the following general result. 

Consider $t\in \ft_\ev$. Write $\fl_t^{\vee}$ for the centralizer of $t$ in $\fg^\vee$. We have $\ev\in \fl_t^{\vee}$. Write $\cB(\fl_t^{\vee})$ and $\cB_{\ev}(\fl_t^{\vee})$ for the flag variety of $\fl_t^{\vee}$ and the Springer fiber of $\ev$ with respect to $\fl_t^{\vee}$. Let $W_{L_t^{\vee}}$ be the Weyl group of $\fl_t^{\vee}$. Write $(\bv)^t$ and $(\spr)^t$ for the fixed point varieties of $\bv$ and $\spr$ under the actions of the vector fields induced by $t$. 
\begin{pro}\cite[Proposition 3.12]{HMK2024} \label{fixed point Tt}
    \begin{enumerate}
        \item Both $(\bv)^{t}$ and $(\spr)^{t}$ have $|W/W_{L_t}|$ connected components. They are isomorphic to $\cB(\fl_t^{\vee})$ and $\cB_{\ev}(\fl_t^{\vee})$, respectively.
        \item The closed embedding $i_{T_t^\vee}: (\spr)^{t}\hookrightarrow (\bv)^{t}$ can be realized as $|W/W_{L_t^{\vee}}|$ copies of the natural closed embedding $i_{\ev}(\lv_t): \cB_{\ev}(\fl_t^{\vee})\hookrightarrow \cB(\fl_t^{\vee})$.
    \end{enumerate}    
\end{pro} 
    The Levi subagebra $\fl^{\vee}_t\subset \fg^\vee$ has a decomposition of the form $(\fg^{\vee})'\times \prod_{i=1}^{m} \fgl_{t_i}$ where $(\fg^{\vee})'$ is of the same type as $\fg^\vee$. Similarly, we can decompose $e^\vee$ as $((e^{\vee})',e^{\vee}_1,...,e^{\vee}_m)$ for $(e^{\vee})'\subset (\fg^{\vee})'$ and $e^{\vee}_i\subset \fgl_{\lambda_i}$. Note that $(e^{\vee})'$ is regular in $(\fl^\vee)':= \fl^\vee \cap (\fg^\vee)'$. Let $\cB_i$ be the flag variety for $\fg\fl_i$. Because the maps $H^*(\cB_i)\rightarrow H^*(\cB_{e^{\vee}_i})$ are surjective (see, e.g., \cite{deConcini1981}), we have the following result.
\begin{cor}\label{gl surjective}
    Consider $t\in \ft_\ev$. In the notation of Section 4.1.1, consider the specialized pullback map  
    $i^{*}_{t}: H^*_{T_\ev}(\bv)_t\rightarrow H^*_{T_\ev}(\spr)_t.$
    Then $i^{*}_{t}$ is surjective if and only if $(i')^*: H^*((\cB')^\vee)\rightarrow H^*(\cB_{(e^\vee)'})$ is surjective. 
\end{cor}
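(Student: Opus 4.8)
The plan is to transport the statement to the $T_\ev$-fixed loci, where \cref{fixed point Tt} reduces it to a product of type $A$ questions together with one question for $(\fg^\vee)'$.

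First I would invoke the Berline--Vergne localization recalled in \cref{fixed point pullback} (together with the remark there allowing its use for $\spr$ via the Slodowy variety): specialization at $t$ identifies the map $i^*_t\colon H^*_{T_\ev}(\bv)_t\to H^*_{T_\ev}(\spr)_t$ with the ordinary pullback $H^*((\bv)^t)\to H^*((\spr)^t)$ attached to the closed embedding $(\spr)^t\hookrightarrow(\bv)^t$. By \cref{fixed point Tt} this embedding is a disjoint union of $|W/W_{L_t^\vee}|$ copies of $i_{\ev}(\lv_t)\colon\cB_{\ev}(\fl_t^\vee)\hookrightarrow\cB(\fl_t^\vee)$; hence $i^*_t$ is surjective if and only if $i_{\ev}(\lv_t)^*\colon H^*(\cB(\fl_t^\vee))\to H^*(\cB_{\ev}(\fl_t^\vee))$ is surjective.

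Next I would use the decomposition $\fl_t^\vee=(\fg^\vee)'\times\prod_{i=1}^m\fgl_{t_i}$ and the matching factorization $e^\vee=((e^\vee)',e^\vee_1,...,e^\vee_m)$ of the nilpotent. These give Künneth isomorphisms $\cB(\fl_t^\vee)\cong(\cB')^\vee\times\prod_i\cB_i$ and $\cB_{\ev}(\fl_t^\vee)\cong\cB_{(e^\vee)'}\times\prod_i\cB_{e^\vee_i}$ that are compatible with the Springer embeddings, so that $i_{\ev}(\lv_t)^*$ is the $\bC$-linear tensor product of $(i')^*\colon H^*((\cB')^\vee)\to H^*(\cB_{(e^\vee)'})$ with the maps $H^*(\cB_i)\to H^*(\cB_{e^\vee_i})$. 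Since the image of a tensor product of linear maps of finite-dimensional spaces with nonzero targets is the tensor product of the images, such a tensor product is surjective if and only if each factor is; and the type $A$ factors $H^*(\cB_i)\to H^*(\cB_{e^\vee_i})$ are surjective by \cite{deConcini1981} (as recalled before the statement). Therefore surjectivity of $i_{\ev}(\lv_t)^*$, equivalently of $i^*_t$, is equivalent to surjectivity of $(i')^*$, which is exactly the assertion.

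This is mostly bookkeeping once \cref{fixed point Tt} and the type $A$ result are granted. The step requiring the most care is the compatibility of the Künneth decompositions with the inclusion $\cB_{\ev}(\fl_t^\vee)\hookrightarrow\cB(\fl_t^\vee)$ --- that the Springer fiber of a product of (reductive) Lie algebras is the product of Springer fibers and that the pullback respects this product structure --- after which the factor-by-factor detection of surjectivity is elementary, and cleanly seen using the even, torsion-free grading on all cohomology groups involved. I do not anticipate a substantive obstacle here.
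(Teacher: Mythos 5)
Your proposal is correct and follows exactly the paper's (largely implicit) reasoning: Berline--Vergne specialization to pass to the fixed loci, then \cref{fixed point Tt} to identify $i^*_t$ with copies of the Levi pullback, and finally the Künneth factorization combined with the type $A$ surjectivity from \cite{deConcini1981}. The only material you've added is the explicit observation that a tensor product of linear maps (with nonzero targets) is surjective iff each factor is, which is the exact bookkeeping the paper leaves to the reader.
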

%A consequence of this proposition is that $i^*_{T_t^\vee}$ is surjective if and only if $i^*_{\ev}(\lv_t)$ is surjective. 

Recall that we consider $\ev$ is regular in $l^\vee \subset \fg^\vee$. And we write $l^\vee$ as $\fg^\vee(a)\times \prod_{i=1}^{k} \fgl_{b_j}$ where $\fg^\vee(a)$ is of the same type as $\fg^\vee$ with a root system of size $a$. For each $1\leqslant j\leqslant k$, write $\lv_j$ for the algebra $\fg^\vee(a)\times \fgl_{b_j}$. Consider $\lv_j$ as a Levi subalgebra of $\fg^\vee(a+b_j)$. Let $e_j^\vee$ be a regular nilpotent element in $\lv_j$. Let $\bv_j$ be the flag variety of $\fg^\vee(a+b_j)$. Then we have the pullback maps $i^*_{e_j^\vee}: H^*(\bv_j)\rightarrow H^*(\cB_{e_j^\vee})$.

\begin{pro}\label{reduce to 3}
    Assume that $\cA_{T_\ev}$ is a free module over $\bC[\ft_\ev]$. The map $i^*_{\ev}: H^*(\bv)\rightarrow H^*(\spr)$ is surjective if and only if all $i^*_{e_j^\vee}$ are surjective for $1\leqslant j\leqslant k$. 
\end{pro}
\begin{proof}
    Consider the subvariety $Z_{ij}$ of $\ft_\ev$ given by $\{(t_1,...,t_k)\in \ft_\ev, t_i= t_j=0\}$. Let $U$ denote the complement $\ft_\ev \setminus \bigcup_{1\leqslant i<j \leqslant k} Z_{ij}$. Consider the natural injection $\iota: \cA_{T_\ev} \hookrightarrow H^*(\spr)$ between two free $\bC[\ft_\ev]$ modules. Because $\ft_\ev\setminus U$ has codimension $2$ in $\ft_\ev$, the injection $\iota$ is an isomorphism if and only if it is an isomorphism over $U$ (see, e.g., \cite[Proposition 1.6]{Hartshorne1980}). In other words, $i_\ev^{*}$ (equivalently, $i_{T_\ev}^*$) is surjective if and only if $i_t^{*}$ is surjective for $t\in U$.  

    Let $Z_i$ be $\{(t_1,...,t_k)\in \ft_\ev, t_i=0\}$. Then $Z'_i= Z_i\cap U= \{(t_1,...,t_k)\in \ft_\ev, t_i=0, t_j\neq 0 \text{ for } j\neq i\}$. Write $U$ as a disjoint union $U'\sqcup (\bigsqcup_{i=1}^{k} Z'_i)$. Then $U'= \{(t_1,...,t_k)\in \ft_\ev, t_i\neq 0 \text{ for } 1\leqslant i\leqslant k\}$.

    For $t\in U'$, the corresponding Levi $\lv_t\subset \fg^\vee$ is a product of $\fg^\vee(a)$ and $\fgl$ factors. The restriction of $e^\vee$ to the factor $\fg^\vee(a)$ is a regular nilpotent element of $\fg^\vee(a)$, so the corresponding Springer fiber is a single point. By \cref{gl surjective}, we have $i_t^{*}$ is surjective.

    For $t\in Z_j$, the corresponding Levi is $\lv_t\subset \fg^\vee$ is a product of $\fg^\vee(a+b_j)$ and $\fgl$ factors. The restriction of $e^\vee$ to the factor $\fg^\vee(a+b_j)$ is $e_j^\vee$. Therefore, $i_t^{*}$ is surjective if and only if $i^*_{e_j^\vee}$ is surjective.
\end{proof}

\subsection{Cases of Hikita conjecture}
Consider $\ev$ regular in $\lv\subset \fg^\vee$. Recall that we write $G.e= \bO_e\subset \fg$ for the induced nilpotent orbit $\Ind_\fl^{\fg}\{0\}$.
\begin{thm}\label{surjective case}
    The Hikita conjecture for the pair $(S(e^\vee), \Spec (\bC[\bO_e]))$ holds in the following cases.
    \begin{enumerate}
        \item $\fg= \fsp_{2n}$, $\fl$ takes the form $\prod_{i=1}^{l} \fgl_2\times \prod_{j=1}^{n-l} \fgl_1$.
        \item $\fg= \fso_{2n+1}$, $\fl$ takes the form $\fso_3 \times \prod_{i=1}^{l} \fgl_2\times \prod_{j=1}^{n-l-1} \fgl_1$, or $\fso_{2l+1}\times \prod_{j=1}^{n-l} \fgl_{1}$.
        \item $\fg= \fso_{2n}$, $\fl$ takes the form $\fso_4\times \prod_{i=1}^{l} \fgl_2\times \prod_{j=1}^{n-l-2} \fgl_1$, $\fso_2\times \prod_{i=1}^{l} \fgl_2\times \prod_{j=1}^{n-l-1} \fgl_1$, or $\fso_{2l}\times \prod_{j=1}^{n-l} \fgl_1$.
    \end{enumerate}
\end{thm}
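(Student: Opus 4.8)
The plan is to establish, for each $\fl$ in the list, the two conditions of \cref{equivalent statements} -- the weak flatness condition and the ring isomorphism $H^*(\spr)\cong\bC[\fh]/\gr I_x$ -- and then to quote \cref{equivalent statements}. For the second condition I would use \cref{graded isom} to replace it by surjectivity of $i^*\colon H^*(\cB^\vee)\to H^*(\spr)$; before doing so one must check that the two exceptional families of \cref{graded isom} (namely $\lambda^\vee$ with two parts in type C, and $\lambda^\vee=(2n-1,1,1)$ in type B, on the $\fg^\vee$-side) do not occur for any of the Levis considered. This is a short parity calculation: writing out the partition $\lambda^\vee$ of an element regular in the Langlands dual of each $\fl$, one checks it always has at least three parts and its largest part never equals the total size minus two, so the exceptions never arise.

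For the weak flatness condition the work is essentially done in Section~3. In type C every $\fl$ in the list has $a=0$, i.e.\ no nontrivial $\fsp$-factor, so \cref{main sp} applies (and \cref{square Tanisaki} yields the bonus explicit presentation of $\bC[\overline{\bO}_e\cap\fh]$). In types B and D, the Levis with all $\fgl$-factors of size $1$ are precisely those covered by \cref{good case so}(1), while the Levis containing a $\fgl_2$-factor have $b_1=2$ and $a\in\{1,2\}$, so \cref{good case so}(2) reduces them to the base case $a=1$, $b_1=2$; there one invokes the explicit description of $\gr I_x$ underlying \cref{b1=2} (the dimension identity verified in Section~2).

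For surjectivity of $i^*$ the first step is to observe that in all of our cases Section~2 provides a uniform set of generators of $I_x$ -- via \cref{square Tanisaki}/\cref{ideal of Wx} when $a=0$, via \cref{bi=1} when the $\fgl$-factors are trivial, and via \cref{b1=2} when $b_1=2$ -- so \cref{free} shows that $\cA_{T_\ev}$ is a free $\bC[\ft_\ev]$-module of rank $|W/W_L|$. This is exactly the hypothesis of \cref{reduce to 3}, which then reduces surjectivity of $i^*_{\ev}$ to surjectivity of $i^*_{e_j^\vee}$ for the auxiliary Levis $\lv_j=\fg^\vee(a)\times\fgl_{b_j}$, $b_j\in\{1,2\}$, each sitting inside $\fg^\vee(a+b_j)$. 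For the Levis in our list containing a $\fgl_2$ one has $a\leqslant 2$, hence $\fg^\vee(a+b_j)$ has rank at most $4$ and the Springer fiber $\cB_{e_j^\vee}$ has a partition with at most three parts: these form a finite list of small cases, settled directly (or by a further application of the fixed-point description \cref{fixed point Tt}). The remaining cases are the two families $\fso_{2l+1}\times\prod\fgl_1$ (type B) and $\fso_{2l}\times\prod\fgl_1$ (type D), where $a=l$ is unbounded; here $\lv_j$ is the maximal Levi $\fg^\vee(l)\times\fgl_1$ and $e_j^\vee$ has partition $(2l,1,1)$ in $\fsp_{2l+2}$, resp.\ $(2l-1,1,1,1)$ in $\fso_{2l+2}$. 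For these I would run the equivariant argument of Section~4 directly: present $\cA_{T_\ev}$ as $\bC[\fh][\ft_\ev]/(P_1,\dots,P_m)$ through the uniform generators, identify $H^*_{T_\ev}(\spr)$ via \cref{identify image} and the $T_\ev$-equivariant affine paving, and show the inclusion $\cA_{T_\ev}\hookrightarrow H^*_{T_\ev}(\spr)$ of free $\bC[\ft_\ev]$-modules is onto by checking it off a closed subset of $\ft_\ev$ of codimension at least $2$, which via \cref{fixed point Tt} collapses to Springer fibers of very small dimension. Assembling these pieces with the weak flatness step and \cref{equivalent statements} gives the theorem.

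I expect the main obstacle to be this last point: surjectivity of $i^*$ for the two unbounded families of auxiliary orbits $(2l,1,1)$ and $(2l-1,1,1,1)$. These cannot be disposed of by a finite computation and genuinely require the explicit generators of $\gr I_x$ from Section~2 together with the localization bookkeeping of Section~4 to control $\cA_{T_\ev}$; fortunately the associated Springer fibers are low-dimensional (in fact surfaces), so the final check is tractable. A secondary, more cosmetic issue is that the chain for the $\fgl_2$-cases rests on the explicit form of $\gr I_x$ for $a=1$, $b_1=2$, presently known only through the dimension count behind \cref{b1=2}; a conceptual proof of that base case would be desirable. The reason the hypothesis ``$\fgl$-factors of size at most $2$'' is the natural cutoff is precisely that this is the range in which Section~2 supplies uniform generators, hence in which \cref{free} and \cref{reduce to 3} are available -- for $\fgl_3$ or $\fgl_4$ factors one has only the conjectural \cref{conj a not 0}.
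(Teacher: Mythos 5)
Your high-level skeleton matches the paper: reduce to \cref{equivalent statements}, handle weak flatness by \cref{flat sp} and \cref{good case so}, show freeness of $\cA_{T_\ev}$ via \cref{free}, then apply \cref{reduce to 3} to cut down to $\fl^\vee$ with a single $\fgl$-factor. Up to here you are on the paper's track.

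The gap is in how you propose to finish the reduced cases, and it is exactly in the place you flag as ``the main obstacle.'' After \cref{reduce to 3} has been applied, the auxiliary Levi is $\lv_j=\fg^\vee(a)\times\fgl_{b_j}\subset\fg^\vee(a+b_j)$, whose center is one-dimensional, so $\ft_\ev\cong\bC$. Your plan is to ``show the inclusion $\cA_{T_\ev}\hookrightarrow H^*_{T_\ev}(\spr)$ of free $\bC[\ft_\ev]$-modules is onto by checking it off a closed subset of $\ft_\ev$ of codimension at least $2$'' -- but a one-dimensional affine space has no nonempty proper closed subset of codimension $\geqslant 2$, so this says nothing. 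The only point of $\ft_\ev$ where surjectivity isn't already trivial is the origin, and the specialization there is the original map $i^*_{e_j^\vee}\colon H^*(\bv_j)\to H^*(\cB_{e_j^\vee})$ -- precisely what you set out to prove. The sheaf-theoretic/Hartogs argument has already been spent (it \emph{is} \cref{reduce to 3}); applying it again on the one-variable torus yields nothing, so the unbounded families $(2l,1,1)\subset\fsp_{2l+2}$ and $(2l-1,1,1,1)\subset\fso_{2l+2}$ are not actually handled by your proposal.

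What the paper does, and what is missing from your write-up, is to enlarge the torus after \cref{reduce to 3}: bring in the one-parameter subgroup $T_\hv$ from an $\fsl_2$-triple $(\ev,\hv,f^\vee)$, and work with $T_\ev\times T_\hv$, whose Lie algebra is $\bC^2$. Now the origin \emph{does} have codimension $2$. The argument becomes: (i) construct a uniform set of generators for the kernel $K$ of $i^*_{T_\ev\times T_\hv}$ in $\bC[y_1,\dots,y_n][t_0,h]$ so that its image is free over $\bC[t_0,h]$ -- this is \cref{free small cases}, done family-by-family via the fixed-point combinatorics of \cref{identify image}; (ii) verify that the specialized pullback $i^*_t$ is surjective for every $t=(t_0,h)\neq(0,0)$ -- this is \cref{fixed pt}, using \cref{fixed point Tt} for $h=0,\,t_0\neq0$ and direct descriptions of $(\bv)^t$ and $(\spr)^t$ for $h\neq0$. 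This two-torus trick handles the bounded and unbounded auxiliary orbits on equal footing, which also renders your split of the $\fgl_2$-cases into a ``finite list settled directly'' unnecessary. That the extra torus $T_\hv$ is the needed ingredient should be your central observation; without it the unbounded families are genuinely out of reach of the Section~4 machinery you invoke.

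A secondary remark: your phrase ``the Springer fiber $\cB_{e_j^\vee}$ has a partition with at most three parts'' is not accurate -- for $\fso_4\times\fgl_1\subset\fso_6$ the auxiliary partition is $(3,1,1,1)$, and for $\fso_4\times\fgl_2\subset\fso_8$ it is $(3,2,2,1)$, both with four parts. And for the record the paper never separately invokes \cref{graded isom}; it works directly with the surjectivity of $i^*$, which is the relevant reformulation of condition (2) of \cref{equivalent statements} after the discussion following \cref{graded isom}.
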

\begin{proof}
    For $\fl$ in the theorem, we have shown that the surjection $\bC[\overline{\bO}_e\cap\fh]\twoheadrightarrow \bC[\fh]/ \gr I_x$ is an isomorphism in \cref{flat sp} and \cref{good case so}. It is left to prove the pullback map $i^*_{\ev}: H^*(\bv)\rightarrow H^*(\spr)$ is surjective in these cases.

    By \cref{reduce to 3}, we only need to prove $i^*_{\ev}$ is surjective for the cases where $\fl$ has precisely one factor $\fgl$. In these cases, $\ft_\ev\cong \bC$. Recall that we have a one dimensional torus $T_\hv$ that comes from an $\fsl_2$-triple of $\ev$. Consider the pullback map $i^*_{T_\ev\times T_\hv}: H^*_{T_\ev\times T_\hv}(\bv)\rightarrow H^*_{T_\ev\times T_\hv}(\spr)$. Similarly to the proof of \cref{reduce to 3}, the surjectivity of $i^*_{T_\ev\times T_\hv}$ follows from the following two results. 
    \begin{itemize}
        \item The image of $i^*_{T_\ev\times T_\hv}$ is a free module over $\bC[\ft_\ev,\ft_\hv]$, see \cref{free small cases}.
        \item For $t\in \ft_\ev\times \ft_\hv= \bC^2$ and $t\neq (0,0)$, the specialized map $i^*_{t}: H^*_{T_\ev\times T_\hv}(\bv)_t\rightarrow H^*_{T_\ev\times T_\hv}(\spr)_t$ is surjective, see \cref{fixed pt}. 
    \end{itemize}
    We prove these results by direct computations; the details are given in \cref{Springerfix}.
\end{proof}

\section{Hikita conjecture for spherical orbits in $\fso_N$}
Let $\fg$ be a classical Lie algebra of type B or D. Let $G$ be the corresponding Lie group and let $B$ be a Borel subgroup of $G$. Consider a nilpotent element $e\in \fg$. The orbit $G.e$ is called \textit{spherical} if $B$ has an open orbit in $G.e$. The partitions of spherical nilpotent orbits have been classified in \cite[Tables 1,2]{Panyushev2003}. In type D and type B, they take the forms $(2^{2k}, 1^{N-4k})$ or $(3, 2^{2k}, 1^{N-4k-3})$ for $N= 2n,2n+1$. From \cite[Proposition 6.3.7]{collingwood1993nilpotent}, the special spherical nilpotent orbits are as follows.
\begin{enumerate}
    \item $\fg$ is of type D, the partition of $e$ is $(2^{2k},1^{2n-2k})$, $0\leqslant k\leqslant n$. On the side of $\fg^\vee$ side, we consider $e^\vee$ with the partition $\lambda^\vee=(2n-2k-1, 2k+1)$ or $(2k,2k)$ when $n=2k$.
    \item $\fg$ is of type B, the partition of $e$ is $(3,2^{2k-2},1^{2n-2k})$, $1\leqslant k\leqslant n$. On the side of $\fg^\vee$ side, we consider $e^\vee$ with the partition $\lambda^\vee=(2n-2k-1, 2k+1)$ or $(2k,2k)$ when $n=2k$. 
\end{enumerate}
In this section, we prove that the Hikita conjecture holds for these special spherical nilpotent orbits.
\subsection{Case $\fg$ of type D}
We first discuss the case where $\lambda^\vee$ has two odd rows. Let the partition $\lambda^\vee$ of $e^\vee$ be $(2n-2k-1\geqslant 2k+1)$, then the partition of $e$ is $(2^{2k}, 1^{2n-4k})$. From \cite[Section 6]{collingwood1993nilpotent}, the fundamental group of $\bO_e= G.e$ is trivial, so $\tilde{D}(\bO_\ev)= \bO_e$. An explicit description of $H^*(\spr)$ for $e^\vee\in \fso_{2n}$ is given by \cite[Theorem D]{Stroppel_2018} and \cite[Theorem 1.2]{Ehrig2016} as follows:
$$H^*(\spr) \cong \bC[y_1, y_2,...,y_n]/(y_i^{2}, y_{K}) \quad 1\leqslant i\leqslant n; K\subset \{1,2,...,n\}, |K|= k+1.$$
Recall that we have a matrix realization of $\fso_{2n}= \fso(V)$ with $V=\bC^{2n}$ and $V$ has a basis $v_1,...,v_{2n}$ such that the symmetric biliniear form of $V$ is given by $(J_o,...,J_o)$. We write $y_{ij}$ for coordinate function of $\fso(V)$ and write $y_{i}$ for $y_{2i-1, 2i-1}$. The first main result of this section is the following proposition.
\begin{pro} \label{main two row}
    We have an isomorphism 
    $$\bC[\fh\cap \overline{\bO}_e]\cong \bC[y_1, y_2,...,y_n]/(y_i^{2}, y_{K})$$
    In other words, the Hikita conjecture holds for $e\in \fso_{2n}$ with partitions $(2^{2k},1^{2n-2k})$, $k>0$.
\end{pro}
First, we need a lemma of the case where $e^\vee$ and $e$ are very even. The proof of the proposition is given after this lemma. Let $V'= \Span(v_1,...,v_{4k})\subset V$ with the bilinear form restricted from $V$. Consider $\fg'= \fsp_{4k}$ and let $\fh'$ be the Cartan subalgebra of $\fg'$. Then $\bC[\fh']= \bC[y_1,y_2,...,y_{2k}]$. Consider the partition $\lambda'= (2^{2k})$, it is a very even partition of $4k$. Let $\bO^I$ and $\bO^{II}$ be the two orbits associated with $\lambda'$. The closures $\overline{\bO^I}$ and $\overline{\bO^{II}}$ are normal in $\fso_{4k}$ (see \cite[Section 17] {Kraft1982OnTG}), and they have the same rings of regular functions. 
\begin{lem} \label{very even}
    Let $I_{(2k,2k)}$ be the ideal generated by $y_i^{2}, y_L- y_{\{1,2,...,2k\}\setminus L}$ for $1\leqslant i\leqslant 2k$ and $L\subset \{1,2,...,2k\}$, $|L|= k$. We have a surjection 
    \begin{equation} \label{surjection very even}
        \bC[\fh'\cap \overline{\bO^I}]\twoheadrightarrow \bC[y_1,...y_{2k}]/ I_{(2k,2k)}.
    \end{equation}
\end{lem}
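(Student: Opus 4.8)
The plan is to identify the surjection (\ref{surjection very even}) with the canonical quotient map coming from degenerating a semisimple orbit onto $\overline{\bO^I}$, and then to pin down the target ring by an explicit computation. First, $\bO^I$ (the orbit with partition $(2^{2k})$ in $\fso_{4k}$) is the Richardson orbit $\Ind_{\fgl_{2k}}^{\fso_{4k}}(\{0\})$ for a parabolic with Levi $\fgl_{2k}$, and its closure is normal (as recalled just above). Let $\ft_e=\fZ(\fgl_{2k})\cong\bC$ and fix a generic $x\in\ft_e$, so that $\fZ_{\fso_{4k}}(x)=\fgl_{2k}$. By \cite[Proposition 4]{conjugacyclass} --- this is precisely the discussion that produces (\ref{flat}) --- the defining ideal $J$ of the scheme-theoretic intersection $\fh'\cap\overline{\bO^I}$ in $\bC[\fh']=\bC[y_1,\dots,y_{2k}]$ satisfies $J\subseteq\gr I_x$, where $I_x$ is the radical ideal of the finite set $W_D.x\subset\fh'$. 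Equivalently, there is a canonical surjection $\bC[\fh'\cap\overline{\bO^I}]=\bC[\fh']/J\twoheadrightarrow\bC[\fh']/\gr I_x$. Hence it suffices to prove $\gr I_x=I_{(2k,2k)}$, for then (\ref{surjection very even}) is exactly this canonical surjection.

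Next I would make $W_D.x$ explicit. In suitable coordinates $x=(t,\dots,t)$ with $t\neq 0$, and since $W_D$ acts on $\fh'=\bC^{2k}$ by permutations together with an even number of sign changes, $W_D.x=\{(\varepsilon_1t,\dots,\varepsilon_{2k}t):\varepsilon_i\in\{\pm1\},\ \varepsilon_1\cdots\varepsilon_{2k}=1\}$, a reduced set of $2^{2k-1}$ points. Each $y_i^2-t^2$ vanishes on $W_D.x$, and for $|L|=k$ so does $y_L-y_{L^c}$: at the point $(\varepsilon_it)_i$ one has $y_L=(\prod_{i\in L}\varepsilon_i)t^k$ and $y_{L^c}=(\prod_{i\in L^c}\varepsilon_i)t^k$, and $\prod_{i\in L}\varepsilon_i=\prod_{i\in L^c}\varepsilon_i$ because their product is $\prod_i\varepsilon_i=1$. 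Therefore $y_i^2-t^2,\,y_L-y_{L^c}\in I_x$, and taking top-degree parts gives $y_i^2,\,y_L-y_{L^c}\in\gr I_x$, i.e.\ $I_{(2k,2k)}\subseteq\gr I_x$. (The complementary condition $\varepsilon_1\cdots\varepsilon_{2k}=-1$ cuts out $\fh'\cap\overline{\bO^{II}}$, which would instead give relations $y_L+y_{L^c}$; the two orbits are swapped by an outer automorphism of $\fso_{4k}$, acting on $\fh'$ by a single sign flip. Checking that it is $\bO^I$, with the minus sign, that matches the chosen Richardson parabolic is the one piece of bookkeeping to carry out carefully.)

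Finally I would close the argument by a dimension count. On one hand $\gr I_x$ has the same colength as $I_x$, namely $|W_D.x|=2^{2k-1}$. On the other hand, in $\bC[y_1,\dots,y_{2k}]/(y_i^2)$ the squarefree monomials $y_S$ form a basis; imposing $y_L=y_{L^c}$ for all $|L|=k$ forces $y_S=0$ whenever $|S|\geqslant k+1$ (multiply such a relation by a variable lying in $L$) and folds the middle degree in half, so $\dim_\bC\bC[y_1,\dots,y_{2k}]/I_{(2k,2k)}=\sum_{j<k}\binom{2k}{j}+\tfrac12\binom{2k}{k}=2^{2k-1}$. Since $I_{(2k,2k)}\subseteq\gr I_x$ and the two quotients have equal finite dimension, $\gr I_x=I_{(2k,2k)}$, and the lemma follows. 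The main obstacle is not a hard estimate but the coherent handling of the very-even combinatorics: the parity condition $\prod_i\varepsilon_i=1$ and its effect on the middle-degree monomials, and matching the labelling of $\bO^I$ versus $\bO^{II}$ to the sign convention used in (\ref{surjection very even}).
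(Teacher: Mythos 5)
Your proposal follows essentially the same approach as the paper: identify the surjection with the canonical map (\ref{flat}) via the Richardson structure $\bO^I=\Ind_{\fgl_{2k}}^{\fso_{4k}}(\{0\})$, describe $W_D.x$ as the even-sign $\pm t$-vectors, deduce $I_{(2k,2k)}\subseteq\gr I_x$, and close with the dimension count $\sum_{j<k}\binom{2k}{j}+\tfrac12\binom{2k}{k}=2^{2k-1}=|W_D.x|$ plus \cite[Proposition 4]{conjugacyclass}. You supply the verification that the paper only asserts (the count of squarefree monomials and the vanishing of $y_S$ for $|S|\geqslant k+1$), and you in fact write the binomial identity with the correct index $2k$ where the paper's displayed formula has a typo ($4k$ in place of $2k$); the argument is correct.
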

\begin{proof}
    We claim that this surjection is the surjection introduced in (\ref{flat}). Note that $\bO^I$ is birationally Richardson with respect to the Levi subalgebra $\fl= \fgl_{2k}$. We can assume $\fl= \diag(\fgl_{2k}, -\fgl_{2k})$ in $\fg'$. Then a generic element $x'\in Z_{\fg'}(\fl')$ takes the form $\diag(t\Id, -t\Id)$. Let $W'$ be the Weyl group of $\fg'$, and write $I_{x'}$ for the defining ideal of $W'.x'$ in $\fh'$. The set $W'.x'$ consists of elements $(y_1,y_2,...,y_{2k})$ such that $y_i= \pm t$, and $y_1y_2...y_n= t^{2k}$. Hence, $\gr I_{x'}$ contains $I_{(2k,2k)}$. It is straightforward to check that $$\dim \bC[y_1,...y_{2k}]/ I_{(2k,2k)}= \sum_{l=0}^{2k-1} {4k \choose l}+ \frac{1}{2} {4k \choose  2k}= 2^{2k-1}= |W'.x|$$
    Thus $I_{(2k,2k)}= \gr I_{x'}$. From \cite[Proposition 4]{conjugacyclass}, we have a surjection $\bC[\fh'\cap \overline{\bO^I}] \twoheadrightarrow \bC[\fh]/ \gr I_{x'}=  \bC[\fh]/I_{(2k,2k)}$.

\end{proof}
Later, in \cref{very even}, we show that the surjection in the above proposition is an isomorphism, thus justifying the notation $I_{(2k,2k)}$. We now proceed to the proof of \cref{main two row}
\begin{proof}
    It is clear that for a matrix $A\in \overline{\bO}_e$, we have $A^2=  0$ and $\rk A$ $\leqslant 2k$. Similarly to the previous section, we have $y_i^{2}, y_{K}\in I_\lambda$. Hence, $\bC[\fh\cap \overline{\bO}_e]$ is a quotient of $\bC[y_1, y_2,...,y_n]/(y_i^{2}, y_{K})$. To prove the proposition, we first show that the element $y_1y_2...y_{k}$ is nonzero in $\bC[\fh\cap \overline{\bO}_e]$. 

    Let $V'= \Span(v_1,...,v_{4k})\subset V$ with the bilinear form restricted from $V$. Then we have an inclusion 
    $$i_{4k}: \fso_{4k}= \fso(V')\hookrightarrow \fso(V)= \fso_{2n}$$    
    It is clear that $i_{4k}(\overline{\bO^I}), i_{4k}(\overline{\bO^{II}})\subset \overline{\bO}_e$. And $i_{4k}$ sends the diagonal matrices $\fh' \subset \fso_{4k}$ to $(\fh', 0,...,0)\subset \fh$. These closed embeddings induce a surjection on the level of functions:
    \begin{equation} \label{surject to different type}
        i_{4k}^*: \bC[\fh\cap \overline{\bO}_e]\twoheadrightarrow \bC[\fh'\cap \overline{\bO^I}].
    \end{equation}
    Combing with the surjection from \cref{very even}, we have a surjection $$\bC[\fh]/I_\lambda= \bC[\fh\cap \overline{\bO}_e]\twoheadrightarrow \bC[y_1,...y_{2k}]/ I_{(2k,2k)}.$$ 
    The image of $y_1y_2...y_k$ does not belong to $I_{(2k,2k)}$, so it is nonzero in the target. Hence, $y_1y_2...y_k$ does not belong to $I_\lambda$.

    Since we can view ${\bO_e}$ as an $O_{2n}$-orbit of $\fso_{2n}$, the defining ideal of $\overline{\bO}_e$ is $O_{2n}$-stable. We have a subgroup $\tilde{W}$ of $O_{2n}$ such that $\tilde{W}\cong S_n \rtimes (\cyclic{2})^{\oplus n}$ and $\tilde{W}$ acts on $\fh$ as the Weyl group of type $B$. Note that the vector subspace $V_{K}=\Span \{\tilde{W}. (y_1y_2...y_k)\}$ of $\bC[\fh]$ is an irreducible repersentation of $\tilde{W}$ of dimension $n\choose k$ (see \cref{irred} below). Because $I_\lambda$ is $\tilde{W}$-stable, the intersection $I_\lambda \cap V_K$ must be a $\tilde{W}$ submodule of $V_K$, hence it is $V_K$ or $\{0\}$. Now $y_1y_2...y_k\notin I_\lambda$, so $V_K\cap I_\lambda= \{0\}$. 
    
    Similarly, considering the embeddings $\fso_{4l}\hookrightarrow \fso_{2n}$ for $1\leqslant l\leqslant k$, we have $V_{L}\cap I_\lambda= \{0\}$ for $V_{L}=$ Span$\{\tilde{W}. (y_1y_2...y_l)\}$ $\subset \bC[\fh]$. Therefore, $\dim \bC[\fh\cap \overline{\bO}_e]\geqslant \sum_{l=0}^k {n \choose l}$=  $\dim \bC[y_1, y_2,...,y_n]/(y_i^{2}, y_{K})$. This completes the proof of the proposition.   
\end{proof}
\begin{lem}\label{irred}
    $V_{K}=$ Span$\{\tilde{W}. (y_1y_2...y_k)\}$ is an irreducible repersentation of $\tilde{W}$.
\end{lem}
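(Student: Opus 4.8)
The plan is to decompose $V_K$ under the abelian normal subgroup $N\cong(\cyclic{2})^{\oplus n}\subset\tilde W$ consisting of the sign changes, and to exploit that this decomposition is multiplicity-free. First I would describe $V_K$ concretely. The subgroup $S_n\subset\tilde W$ permutes the coordinates, so it carries $y_1y_2\cdots y_k$ to $y_L:=\prod_{i\in L}y_i$ for every $k$-element subset $L\subset\{1,\dots,n\}$, while the elements of $N$ only rescale these monomials by $\pm1$. Hence $\{y_L:|L|=k\}$ is a basis of $V_K$ and $\dim V_K=\binom nk$.

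Next I would record the $N$-action: the $i$-th generator $\varepsilon_i\in N$ sends $y_j$ to $-y_j$ when $j=i$ and fixes it otherwise, so $\varepsilon_i$ acts on $y_L$ by the scalar $-1$ if $i\in L$ and $+1$ if $i\notin L$. Thus $\bC\,y_L$ is the $N$-eigenline for the character $\chi_L$ with $\chi_L(\varepsilon_i)=-1$ exactly when $i\in L$, and since $L$ is recovered from $\chi_L$ as $\{i:\chi_L(\varepsilon_i)=-1\}$ these characters are pairwise distinct. Therefore $V_K=\bigoplus_{|L|=k}\bC\,y_L$ is precisely the eigenspace (isotypic) decomposition of $V_K$ as an $N$-module, and every eigenspace is one-dimensional. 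Now any $\tilde W$-subrepresentation $U\subseteq V_K$ is in particular $N$-stable, hence a sum of some of the lines $\bC\,y_L$. If $U\neq0$, pick $L_0$ with $y_{L_0}\in U$; since $S_n$ acts transitively on the $k$-element subsets and $\sigma\cdot y_{L_0}=y_{\sigma(L_0)}$, this forces $y_L\in U$ for every $L$, so $U=V_K$. Hence $V_K$ is irreducible.

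I do not anticipate a genuine difficulty here: the argument is Clifford theory for the extension $1\to N\to\tilde W\to S_n\to1$ made explicit. The only points that deserve a line of care are that the $N$-characters $\chi_L$ are truly pairwise distinct — this is what makes each $N$-eigenspace one-dimensional, and hence forces every submodule to be a subsum of the $\bC\,y_L$ — and that $\tilde W$ acts on the span of $y_1,\dots,y_n$ through the standard type-$B$ realization, namely $S_n$ by permutations and $N$ by sign changes, which is exactly how the subgroup $\tilde W\subset O_{2n}$ was set up above. (For the record, this also identifies $V_K$ with the irreducible of the hyperoctahedral group attached to a pair of one-row Young diagrams of sizes $n-k$ and $k$, although we shall not need this.)
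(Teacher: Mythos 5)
Your argument is correct and is essentially the paper's proof made explicit: the paper's step of choosing an element with the fewest monomial terms and splitting it by divisibility by $y_i$ is exactly the projection onto the $\pm1$-eigenspaces of the sign change $\varepsilon_i\in N$, which you phrase directly as the multiplicity-free $N$-eigenspace decomposition (Clifford theory), followed by the same appeal to $S_n$-transitivity on $k$-subsets. No gap; the two proofs differ only in presentation.
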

\begin{proof}
    Consider a nontrivial subrepresentation $U$ of $V_K$. Consider $u\in U$ so that $u$ has the least terms as a linear combination of monomials. Fix $1\leqslant i\leqslant n$, if some of these monomials are divisible by $y_i$, write $u_i$ for the sum of them. Then $u_i\in U$ and $u- u_i\in U$, so we need $u_i=u$ or $u_i=0$. Therefore, all terms of $u$ are not divisible by $y_i$ or $u$ is divisible by $y_i$ for $1\leqslant i\leqslant n$. This implies that $u$ is a multiple of a single monomial and $U= V_K$.
\end{proof}

Next, we proceed to consider the case $\lambda^\vee= (2k, 2k)$ in $\fso_{4k}$. There are two corresponding nilpotent orbits. The two corresponding Springer fibers are isomorphic and have the same cohomology rings. Hence, we can write $\spr$ without any confusion. Note that in this case, the Springer fiber $\spr$ is not connected, it has two isomorphic connected componenets $(\spr)^1$ and $(\spr)^0$. By \cite[Theorem 1.2]{Ehrig2016}, we have an explicit description
$$H^*((\spr)^0)\cong H^*((\spr)^1) \cong \bC[y_1, y_2,...,y_{2k}]/(y_i^{2}, y_{L}- y_{\bar{L}}) \quad 1\leqslant i\leqslant n; L\cup \bar{L}= \{1,2,...,2k\}, |L|= |\bar{L}|= k$$
In the notation of \cref{very even}, this equality reads $H^*((\spr)^0)\cong \bC[y_1,...y_{2k}]/ I_{(2k,2k)}$. It makes sense to replace $H^*(\spr)$ in the cohomological side of Hikita conjecture by $H^*((\spr)^0)$.
\begin{pro}
    The surjection (\ref{surjection very even}):
    $$\bC[\fh'\cap \overline{\bO^I}]\twoheadrightarrow \bC[y_1,...y_{2k}]/ I_{(2k,2k)}$$
    in \cref{very even} is an isomorphism. Combining with \cref{main two row}, we have that the Hikita conjecture holds for special spherical nilpotents $e\in \fso_{2n}$.
\end{pro}
\begin{proof}
    In the proof of \cref{main two row}, let $n= 4k+2$ and consider the partition $(2^{2k},1,1)$ of $n$. Let $\bO_{(2^{2k},1,1)}$ be the corresponding nilpotent orbit in $\fso_{4k+2}$. We have showed that $\bC[\overline{\bO}_{(2^{2k},1,1)}\cap \fh]\cong \bC[y_1,...y_{2k+1}]/(y_i^2, y_{K})$ where $y_{K}$ denotes the elements $\prod_{i\in K} y_i$ for $|K|= k+1$. Hence, the surjection (\ref{surject to different type}) reads:
    $$i_{4k}^*: \bC[y_1,...y_{2k+1}]/(y_i^2, y_{K})\twoheadrightarrow \bC[\fh'\cap \overline{\bO^I}], \quad i_{4k}^*(y_{2k+1})= 0, i_{4k}^*(y_j)= y_j \text{ for } j<2k +1.$$
    It is clear that $i_{4k}^*$ factors through $\bC[y_1,...y_{2k}]/(y_i^2, y_{K})$. Combining with the surjection (\ref{surjection very even}), we have a map
    $$p: \bC[y_1,...y_{2k}]/(y_i^2, y_{K}) \xrightarrow{p_1} \bC[\fh'\cap \overline{\bO^I}]\xrightarrow{p_2} \bC[y_1,...y_{2k}]/I_{(2k,2k)}.$$
    Let $W_D^{2k}$ be the Weyl group of $\fso_{4k}$. Note that $p$ is $W_D^{2k}$-equivariant, and the kernel of $p$ is spanned by the elements $W_{D}^{2k}.(y_1...y_k- y_{k+1}...y_{2k})$, in other words, $y_L- y_{\bar{L}}$. This kernel is an irreducible representation of $W_D^{2k}$, so either $p_1$ or $p_2$ is an isomorphism of $W$-representations.

    We prove the proposition by contradiction. Assume that $p_1$ is an isomorphism. Note that a similar construction works for $\bO^{II}$. And we have an isomorphism $p_1': \bC[y_1,...y_{2k}]/(y_i^2, y_{K}) \xrightarrow{p_1'} \bC[\fh'\cap \overline{\bO^{II}}]$ by composing $p_1$ with the map $y_1\mapsto -y_1, y_j\mapsto y_j$  for $1<j\leqslant 2k$. Now, consider the scheme-theoretic intersection $\overline{\bO^{II}}\cap \overline{\bO^{I}}$. This scheme is reduced by \cite[Section 17.3]{Kraft1982OnTG}, and it is the closure of the nilpotent orbit $\bO_{(2^{2k-2},1^4)}$ associated to the partition $(2^{2k-2},1^4)$. Therefore, the defining ideal of $\bC[\overline{\bO}_{(2^{2k-2},1^4)}\cap \fh']$ in $\bC[\fh']$ is the sum of the defining ideals of $\bC[\fh'\cap \overline{\bO^{I}}]$ and $\bC[\fh'\cap \overline{\bO^{II}}]$ in $\bC[\fh']$, which is $(y_i^{2}, y_{K})$, $|K|= k+1$. Nevertheless, from \cref{main two row}, we have 
    $$\bC[\bO_{(2^{2k-2},1^4)}\cap \fh]\cong \bC[y_1,...,y_{2k}]/(y_i^2, y_{K'})\neq \bC[y_1,...,y_{2k}]/(y_i^2, y_{K})$$
    where $K'$ runs over subsets of $\{1,2,..,2k\}$ with cardinalities $k$ while $K$ runs over subsets of $\{1,2,..,2k\}$ with cardinalities $k+1$. This contradiction means that $p_2$ must be an isomorphism.
\end{proof}
%How about the other very even case
\subsection{Case $\fg$ of type B}
The context of the Hikita conjecture for special spherical nilpotent orbits $G.e\subset \fso_{2n+1}$ in this section have the following new features.
    \begin{enumerate}
    \item For $\ev\in \fsp_{2n}$ with partition $(2n-2k\geqslant 2k)$. The refined BVLS duality sends $G^\vee.e^\vee$ to a double cover of $G.e$ where $e$ has partition $\lambda= (3,2^{2k-2},1^{2n-4k+2})$. Write $\tilde{\bO}_\lambda$ for this double cover.
    \item For $\ev\in \fsp_{2n}$ with partition $(2k+1,2k+1)$. The refined BVLS duality sends $G^\vee.e^\vee$ to the orbit $G.e$ where $e$ has partition $\lambda= (3,2^{2k})$. However, the closure of this orbit is not normal (see, e.g., \cite{Kraft1982OnTG}).
\end{enumerate}
%Explain why the modification is not artificial
 Recall that we have verified the following isormorphism for various cases in this paper.
\begin{equation} \label{statement}
    H^*(\spr)\cong \bC[\fh\cap \overline{\bO}_e] 
\end{equation}

This version of the Hikita conjecture comes from the identification of the Cartan subquotient $\cC_\nu(\bC[\bO_e])$ with the functions on the scheme theoretic intersection $\bC[\fh\cap \overline{\bO}_e]$, provided $\overline{\bO}_e$ is normal. In general, the Cartan subquotient is defined using a maximal torus of the groups of graded Hamiltonian automorphisms of $\Spec (\bC[\tilde{\bO}_e])$, \cite[Section 9.3]{refinedbvls}. This group is $G$ in most cases, which justifies our consideration of $T$ as a maximal torus of $G$.

For special spherical nilpotent orbits $\bO_e$ in $\fso_{2n+1}$, the statement of Hikita conjecture is slightly different from (\ref{statement}) for the following reason. The nilpotent covers $\tilde{\bO}_e$ belong to the set of shared orbits from \cite{fu2023local}, so they have a larger group of automorphisms. In particular, from \cite[Proposition 2.12]{fu2023local}, we have
\begin{enumerate}
    \item $\bC[\tilde{\bO}_{(3,2^{2k-2},1^{2n-4k+2})}]$ is isomorphic to $\bC[\bO_{(2^{2k},1^{2n-4k+2})}]= \bC[\overline{\bO}_{(2^{2k},1^{2n-4k+2})}]$. Here, $G'.e'= \bO_{(2^{2k},1^{2n-4k+2})}$ is an orbit of $\fg'= \fso_{2n+2}$.
    \item The normalization of $\overline{\bO}_{(3,2^{2k})}$ is isomorphic to $\overline{\bO^I}_{(2^{2k+2})}$. Here, $G'.e'= {\bO^I}_{(2^{2k+2})}$ is an orbit of $\fg'= \fso_{2n+2}$.
\end{enumerate}
The groups of graded Hamiltonian automorphisms of these orbits are $G'$ mentioned above. Hence, the torus that we use to define the Cartan subquotient of $\bC[\bO_e]$ in these cases is a maximal torus $T'$ of $G'$. Write $\fh'\subset \fg'$ for the corresponding Lie algebra. In the statement of the Hikita conjecture for these orbits, the right side of (\ref{statement}) is $\bC[\fh'\cap \overline{G'.e'}]$ instead of $\bC[\fh\cap \overline{\bO}_e]$. Now $\fg'$ is of type D. The partition of $(e^\vee)'$ in the two above cases are $(2n-2k+1, 2k+1)$ and $(2k+2,2k+2)$, respectively. In the previous section, we have proved that $\bC[\fh\cap \overline{G'.e'}]\cong H^*(\cB_{(e^\vee)'})$. Therefore, the Hikita conjecture in this case follows from the fact that the cohomology of $\cB_{(2n-2k,2k)}$ in type C is the same as the cohomology of $\cB_{(2n-2k+1, 2k+1)}$ in type D (\cite[Remark 2]{Stroppel_2018}).

%messy notation!!

\appendix
\section{Fixed point varieties of Springer fibers} \label{Springerfix}
In this appendix, we conclude the proof of \cref{surjective case}
\begin{lem}\label{free small cases}
    In the following cases, the image of $i^*_{T_\ev\times T_\hv}$ is a free module over $\bC[\ft_\ev,\ft_\hv]= \bC[t_0,h]$.
    \begin{enumerate}
        \item $\fg^\vee$ is of type B, the partition of $e^\vee$ takes the form $(2,2,1)$ or $(1,1,1)$.
        \item $\fg^\vee$ is of type C, the partition of $e^\vee$ takes the form $(2n-2,1,1)$, $(2,2,2)$, or $(4,3,3)$.
        \item $\fg^\vee$ is of type D, the partition of $e^\vee$ takes the form $(2n-3,1,1,1)$ $(3,2,2,1)$, or $(2,2,1,1)$.
    \end{enumerate}
\end{lem}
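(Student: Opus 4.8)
The plan is to reduce the freeness of $\cA:=\cA_{T_\ev\times T_\hv}=\mathrm{Im}(i^*_{T_\ev\times T_\hv})$ to a single finite-dimensional inequality, and then establish that inequality case by case, exactly as in Section~2. In each of the listed cases $\lv^\vee$ has a unique $\fgl$ factor, so $\ft:=\ft_\ev\times\ft_\hv\cong\bC^2$ with coordinates $t_0,h$; write $R=\bC[t_0,h]$ and $N:=|W/W_L|=\dim H^*(\spr)$. Recall from Section~4.1.1 that $H^*_{T_\ev\times T_\hv}(\spr)$ and $H^*_{T_\ev\times T_\hv}(\bv)$ are free $R$-modules, the former of rank $N$. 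The elementary fact I would use is the graded Nakayama criterion: a finitely generated graded $R$-module $M$ is free if and only if $\dim_\bC M/(t_0,h)M$ equals $\dim_{\mathrm{Frac}(R)}M\otimes_R\mathrm{Frac}(R)$; indeed $\dim_\bC M/(t_0,h)M\geq\mathrm{rank}(M)$ always, and when equality holds a homogeneous lift of a $\bC$-basis of $M/(t_0,h)M$ gives a surjection $R^{\mathrm{rank}(M)}\twoheadrightarrow M$ whose kernel is torsion, hence zero. Since $\cA$ is a submodule of the free $R$-module $H^*_{T_\ev\times T_\hv}(\spr)\hookrightarrow\bigoplus_{w\in W/W_L}R_w$, it is finitely generated and torsion-free, so it only remains to (a) compute its generic rank and (b) bound $\dim_\bC\cA/(t_0,h)\cA$ from above by $N$.

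For (a), and for an explicit model of $\cA$: by the localization theorem the restriction $H^*_{T_\ev\times T_\hv}(\spr)\hookrightarrow\bigoplus_w R_w$ becomes an isomorphism after inverting a suitable homogeneous element of $R$, so the composite $H^*_{T_\ev\times T_\hv}(\bv)\to\bigoplus_w R_w$ is generically surjective and $\cA$ has generic rank exactly $N$. By the $T_\hv$-equivariant analogue of \cref{identify image}, the $w$-component of this composite sends the Chern roots $(y_1,\dots,y_n)$ of $\bv$ to $w.\xi$, where $\xi=\xi(t_0,h)\in R^n$ is the vector of $T_\ev\times T_\hv$-weights on the appropriate graded piece of the standard representation $V$; its entries are block-constant linear forms in $t_0,h$, determined by the grading by $\fZ(\lv)$ together with the $\fsl_2$-weights of $(e^\vee,h^\vee,f^\vee)$, which I would read off directly from the (small) partition $\lambda^\vee$ in each case. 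Hence $\cA\cong\bC[y_1,\dots,y_n,t_0,h]/J$, where $J$ is the defining ideal of the $W$-orbit $W.\xi$ inside $\mathbb{A}^n_R$.

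For (b) I would write down a generating set of $J$ of the same shape as in Section~2 — remainders of one-variable polynomial divisions as in \cref{ideal of Wx}, together with partial elementary and complete symmetric polynomials and the $\sco$-type elements of \cref{conj a not 0} — each homogeneous in $y_1,\dots,y_n,t_0,h$ once all of these are given degree one. Specializing $t_0=h=0$ extracts precisely the leading $y$-components of these generators, which are the generators of the associated homogeneous ideal $\gr I_\xi$ of Section~2; thus $\cA/(t_0,h)\cA=\bC[\fh]/\gr I_\xi$, and the desired bound becomes $\dim_\bC\bC[\fh]/\gr I_\xi\leq N$. This follows from the inductive filtration arguments of \cref{square Tanisaki}, \cref{bi=1} and \cref{b1=2} applied essentially verbatim — the parameter $h$ changes only the linear forms $\xi$, not the combinatorics of the filtration — and in these five small cases it can alternatively just be confirmed by a Gröbner-basis computation. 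Together with (a) and the Nakayama criterion this shows $\cA$ is free over $R$ of rank $N$.

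The step I expect to be the main obstacle is the explicit determination of $\xi(t_0,h)$, i.e.\ locating the $T_\hv$-fixed flags inside each of these small Springer fibres and computing the $T_\hv$-weights on the tautological line bundles there; this is exactly where one uses that $\lambda^\vee$ has only two, three or four parts, and it is also where things would break for larger $\fgl$ factors, since if the $h$-direction failed to be transverse the special fibre could jump and $\cA$ would no longer be free. A secondary point requiring care is checking that the generators of $J$ written down are genuinely \emph{uniform} in the sense of \cref{uniform} for this two-parameter family, so that the specialization at $t_0=h=0$ really equals $\gr I_\xi$ rather than a strictly smaller ideal; this too is verified case by case.
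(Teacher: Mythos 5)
Your overall architecture --- torsion-freeness plus a rank/fiber-dimension comparison, closed by exhibiting generators of the kernel whose $(t_0,h)=(0,0)$-specialization is large enough --- is the same as the paper's, which packages it as ``freeness is implied by the existence of a uniform set of generators of $K$.'' But the expectation that these generators ``have the same shape as in Section~2'' will not hold. The Section~2 construction rests on dividing $\prod_{i\in L}(z^2-y_i^2)$ by $\prod_i(z^2-t_i^2)^{\cdots}$, which uses that for $\xi$ in the $\ft_\ev$-plane only, the orbit $W.\xi$ has block-constant coordinates $\pm t_i$. Turning on $h$ introduces additive $h$-shifts that destroy this block-constancy and with it the divisibility argument: in the paper's own proof of this lemma one already meets generators such as $(y_2-3h)(y_2^2-t_0^2+2hy_1-3h^2)+2h(h^2-y_3^2)$ (for $\lambda^\vee=(2,2,2)$ in type C), which have no analogue in \cref{square Tanisaki} and have to be produced case by case from the explicit description of the fixed-point evaluations. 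So the Gr\"obner/ad-hoc verification you list as a fallback is the actual method, not a backup.

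You also have the order of difficulty inverted. Determining $\xi(t_0,h)$ is routine: $(\spr)^{T_\ev\times T_\hv}=(\spr)^{T_\ev}$, which is already a discrete set, and the $h$-components of $\xi$ are read directly from the $\fsl_2$-weights attached to the Jordan blocks of $\lambda^\vee$. The step you call ``secondary'' --- that the chosen generators specialize onto \emph{all} of $\gr K$ at $(0,0)$ rather than a strictly smaller ideal --- is the entire content. Indeed, once $\dim_\bC\bC[\fh]/\gr I_\xi = N$ is granted, the identity $\cA/(t_0,h)\cA \cong \bC[\fh]/\gr I_\xi$ is not an intermediate observation but is \emph{equivalent} to freeness: for any homogeneous $P\in J$ the specialization $P(y,0)$ is the $y$-leading term of $P(y,\xi)\in I_\xi$, so $J_0\subseteq\gr I_\xi$ and one always has a surjection $\cA/(t_0,h)\cA\twoheadrightarrow\bC[\fh]/\gr I_\xi$; the only inequality available for free therefore goes the wrong way, and it is exactly the reverse inclusion that must be verified in each of the listed cases.
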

\begin{proof}
    \begin{enumerate}
    \item We explain the details of the case $\fg^\vee$ of type C. Other cases are similar. Let $V$ be the standard representation of $\fg^\vee$. Write $K$ for the kernel of the map $i^*_{T_\ev\times T_\hv}$. The freeness of the image of $i^*_{T_\ev\times T_\hv}$ is implied by the existence of a set of uniform generators of $K$. In other words, we want to find a set of polynomials $P_1,...,P_m\in \bC[y_1,...,y_n][t_0,h]$ so that the specialization of $P_1,...,P_m$ to $(t_0,h)= (0,0)$ generates $\gr K$, the specialization of $K$ at $(0,0)$. 
    \begin{enumerate}
        \item $\lambda^\vee= (2n-2,1,1)$. The weights of $t= (t_0,h)$ on $V$ are $ t_0, -t_0,(2n-3)h,...,(3-2n)h$. Let $P_{(2n-2,1,1)}$ be the set of $2n$ points $(y_1,...,y_n)\subset \bC^n$ with the following properties.
        \begin{itemize}
            \item $y_i= t$ or $-t$ for some $1\leqslant i\leqslant n$.
            \item After we remove $y_i$, the remaining $n-1$ coordinates are $(2n-3)h,...,h$ in order from left to right.
        \end{itemize}
        By \cite[Section 7]{HMK2024}, $K$ contains the polynomials in $\bC[y_1,...,y_n][t_0,h]$ that vanish on $P_{(2n-2,1,1)}$. Consider $y=(y_1,...,y_n)$ in $P_{(2n-2,1,1)}$. For $1\leqslant i\leqslant n$, if $y_i\neq (2n-1-2i)h$ then $y_j= (2n-1-2j)h$ for $j<i$ and $y_j= (2n+1-2j)h$ for $j>i$. In other words, $K$ contains $(y_i- (2n-1-2i)h)(y_j- (2n-1-2j)h)$ for $j<i$ and $(y_i- (2n-1-2i)h)(y_j- (2n+1-2j)h)$ for $j>i$. Similarly, $K$ contains $(y_i- (2n-1-2i)h)(y_i^{2}-t_0^{2})$. If we specialize $h$ to $0$, then $\gr K$ contains $y_iy_j, y_i^{3}$. These are precisely the generators of $\gr I_x$ mentioned in \cref{bi=1}. Therefore, we have obtained a set of uniform generators for $K$, and the image of $i^*_{T_\ev\times T_\hv}$ is free. 

        \item $\lambda^\vee= (2,2,2)$. The weights of $t=(t_0,h)$ on $V$ are $t_0+h, t_0-h, -t_0+h, -t_0-h, h,-h$. Let $P_{(2,2,2)}$ be the set of $12$ points $(y_1,y_2,y_3)\subset \bC^n$ with the following properties.
        \begin{itemize}
            \item If there is $i\in \{1,2,3\}$ so that $y_i= -h, t_0-h$ or $-t_0-h$ then there exists $1\leqslant j<i$ so that $y_j= y_i+2h$. 
        \end{itemize}
      Similarly to the previous case, we have a set of uniform generators for $K$ that consists of $(y_1-h)(y_2-h)(y_3-h)$, $(y_1-h)(y_1-t_0-h)(y_1+t_0-h)$, $(y_3-h)(y_3^{2}-t_0^{2}+2h(y_1+y_2)-5h^2)$, $(y_2-3h)(y_2^{2}-t_0^{2}+2hy_1-3h^2)+2h(h^2-y_3^{2})$. 
        
        \item $\lambda^\vee= (4,3,3)$. The weights of $t=(t_0,h)$ on $V$ are $t_0+2h, t_0, t_0-2h, -t_0+2h, -t_0, -t_0-2h, 3h, h,-h, -3h$. Let $P_{(4,3,3)}$ be the set of $80$ points $(y_1,...,y_5)\subset \bC^5$ with the following properties.
        \begin{itemize}
            \item There are $1\leqslant i< j\leqslant 5$ so that $y_i= 3h$ and $y_j=  h$. 
            \item Assume that the three remaining coordinates are $y_{i_1}, y_{i_2}, y_{i_3}$, $i_1<i_2<i_3$. Then if $y_{i_l}+ 2h$ belongs to the set of weights of $t$ on $V$, then there is $l'<l$ so that $y_{i_{l'}}= y_{i_l}+  2h$. In particular, $y_{i_1}$ is always $t_0+2h$ or $-t_0-2h$.
        \end{itemize}
        From \cite[Section 7]{HMK2024}, $K$ contains the polynomials in $\bC[y_1,...,y_5][t_0,h]$ that vanish on $P_{(4,3,3)}$. The ideal $\gr K$ is generated by $y_i^{3}$ and $y_iy_jy_ky_l$. On the dual side, the partition of $e$ is $(3,3,1,1,1)$. The elements $y_iy_jy_ky_l$ come from the Pfaffian of size $8$ minors, the elements $y_i^{3}$ come from $A^3=0$ if $A\in \overline{\bO}_e$. We have a corresponding set of uniform generators of $K$ as follows.
        \begin{itemize}
            \item We must have $3h\in \{y_1,...,y_4\}$, so $\prod_{i=1}^{4}(y_i-3h)\in K$. Similarly, $\prod_{i=2}^{5}(y_i-h)\in K$. If $y_5$ is not $h$, then $3h\in \{y_1,y_2,y_3\}$. Hence $(y_5-h)(y_1-3h)(y_2-3h)(y_3-3h)\in K$. Similarly, we have $(y_5-h)(y_4-h)(y_1-3h)(y_2-3h)$ and $(y_5-h)(y_4-h)(y_3-h)(y_1-3h)$ in $K$. The leading terms of these elements give the elements $y_iy_jy_ky_l$ in $\gr K$.
            \item We have $y_1$ is $3h$, $t_0+2h$, or $t_0-2h$. Hence $(y_1- 3h)(y_1-t_0-2h)(y_1+t_0-2h)\in K$, this element has the leading term $y_1^{3}\in \gr K$. Similarly, we have elements with leading terms $y_2^{3},...,y_5^{3}$ in $K$. 
        \end{itemize}
        \end{enumerate}
         \item Consider $\fg^\vee$ of type B and $\lambda^\vee= (2,2,1)$. The weights of $t$ on $V$ are $t_0+h, t_0-h, -t_0+h, -t_0-h,0$. The set $P_{(2,2,1)}$ consists of $4$ points $(t_0+h, t_0-h)$, $(-t_0+h, -t_0-h)$, $(t_0+h, -t_0+h)$ and $(-t_0+h, t_0+h)$ in $\bC^2$. The ideal $K$ is generated by $(y_1-t_0-h)(y_1+t_0-h)$, $(y_2+y_1-2h)(y_2-y_1+2h)$. The leading terms of these elements are $y_1^2$ and $y_2^2$. On the dual side, the partition of $e$ is $(2,2)$ and $\bC[\overline{\bO}_e\cap \fh]= \bC[y_1,y_2]/(y_1^{2}, y_2^{2})$, see \cref{flat sp}.
         \item Consider $\fg^\vee$ of type D.
         \begin{enumerate}
             \item $\lambda^\vee= (2n-3,1,1,1)$. This case is similar to the case $\lambda^\vee= (2n-2,1,1)$ in type C.
             \item $\lambda^\vee= (2,2,1,1)$. The weights of $t=(t_0,h)$ on $V$ are $t_0+h, t_0-h, -t_0+h, -t_0-h,0,0$. The set $P_{(2,2,1,1)}$ consists of $12$ points $(y_1,y_2,y_3)$ with the following property.
             \begin{itemize}
                 \item One of the coordinates is $0$. The two nonzero coordinates are $(t_0+h, t_0-h)$, $(-t_0+h, -t_0-h)$, $(t_0+h, -t_0+h)$, or $(-t_0+h, t_0+h)$.
             \end{itemize}
             This case is similar to the case $\lambda^\vee= (2,2,2)$ in type C. The ideal $\gr K$ is generated by $y_i^{3}$ and $y_1y_2y_3$ as in \cref{b1=2}.
             %The ideal $K$ is generated by $y_1y_2y_3$, $y_1(y_1-t_0-h)(y_1+t_0-h)$, 
             \item $\lambda^\vee= (3,2,2,1)$. The weights of $t=(t_0,h)$ on $V$ are $t_0+h, t_0-h, -t_0+h, -t_0-h,2h,0,0,2h$. The set $P_{(2,2,1,1)}$ consists of $24$ points $(y_1,y_2,y_3,y_4)$ with the following property.
             \begin{itemize}
                \item There is $1\leqslant i< j\leqslant 4$ so that $y_i= 2h$ and $y_j= 0$.
                 \item The two other coordinates are $(t_0+h, t_0-h)$, $(-t_0+h, -t_0-h)$, $(t_0+h, -t_0+h)$, or $(-t_0+h, t_0+h)$.
             \end{itemize}
             The ideal $\gr K$ is generated by $y_i^{3}$ and $y_iy_jy_k$ as in \cref{b1=2}.
         \end{enumerate}

    \end{enumerate}

\end{proof}

\begin{lem}\label{fixed pt}
    Consider $\ev$ in \cref{free small cases}. For $t\in \ft_\ev\oplus \ft_\hv= \bC^2$ and $t\neq (0,0)$, the pullback map $i^*_{t}: H^*((\bv)^{t})\rightarrow H^*((\spr)^t)$ is surjective. Here, $X^t$ denotes the fixed-point subvariety of $X$ under the action of the vector field induced by $t$. 
\end{lem}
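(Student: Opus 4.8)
The plan is to run through each partition $\lambda^\vee$ of \cref{free small cases} and each nonzero line $\bC t\subset\ft_\ev\oplus\ft_\hv\cong\bC^2$, describing $(\bv)^t$, $(\spr)^t$ and the map $i^*_t$ explicitly via the localization picture of Section 4.1.1. Recall that $i^*_t$ is the specialization at $t$ of $i^*_{T_\ev\times T_\hv}$, that $(\bv)^t=\bigsqcup_{W/W_{L^\vee_t}}\cB(\fl^\vee_t)$ where $\fl^\vee_t=Z_{\fg^\vee}(t)$, and that $(\spr)^t=\spr\cap(\bv)^t$ is cut out inside $(\bv)^t$ by the Springer condition for $e^\vee$; the Levi $\fl^\vee_t$ is determined by which of the weights of $t$ on the standard representation $V$ coincide, and this multiset of weights is listed explicitly in every case of \cref{free small cases}. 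Finally, since $H^*_{T_\ev\times T_\hv}(\spr)$ is free over $\bC[\ft_\ev\oplus\ft_\hv]$ (affine paving of $\spr$) of rank $\dim H^*(\spr)=|W/W_L|$, we have $\dim_\bC H^*((\spr)^t)=|W/W_L|$ for every $t$; so it suffices to check that $i^*_t$ has image of full dimension $|W/W_L|$.

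I would first dispose of two easy families of directions. If $t\in\ft_\ev$ (i.e. the $h$-component vanishes) then $\fl^\vee_t=\lv^\vee$, and by \cref{gl surjective} surjectivity of $i^*_t$ reduces to surjectivity of $H^*(\cB(\fg^\vee(a)))\to H^*(\cB_{(e^\vee)'})$; but $(e^\vee)'$ is the regular nilpotent of $\fg^\vee(a)$, so $\cB_{(e^\vee)'}$ is a point and this map is onto. If the ratio $(t_0:h)$ is generic — that is, outside the finitely many lines on which two of the explicit weights of $t$ on $V$ coincide — then $t$ is regular semisimple in $\fg^\vee$, so $(\bv)^t=(\bv)^{T_\ev\times T_\hv}$ and $(\spr)^t=(\spr)^{T_\ev\times T_\hv}$ are finite and $i^*_t$ is the restriction of functions from a finite set to a subset, hence surjective.

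What remains, for each of the few partitions of \cref{free small cases}, is the finite list of special lines with $h\neq0$ — above all $t=(0,h)$, where $\fl^\vee_t=Z_{\fg^\vee}(h^\vee)$ is a nonabelian Levi and $(\spr)^{(0,h)}$ is the ``graded'' fixed locus of $h^\vee$ in the Springer fiber (the flags adapted to the $h^\vee$-grading and containing $e^\vee$; this is not literally a Springer fiber of a smaller group). Here I would argue by explicit computation, using the bookkeeping already in place in the proof of \cref{free small cases}. The key point is that for any $t$ with $h\neq0$ the variety $(\spr)^t$ still carries the residual action of $T_\ev$, with isolated fixed locus $(\spr)^t\cap\spr^{T_\ev}=(\spr)^{T_\ev\times T_\hv}$; localizing the $T_\ev$-equivariant cohomology of $(\bv)^t$ and $(\spr)^t$ to these fixed points identifies $i^*_t$, via \cref{identify image}, with the combinatorial map attached to the point-set $P_{\lambda^\vee}$ specialized at $t$. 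In every case on the list $(\spr)^t$ turns out to be a disjoint union of points and of products of projective lines (it is paved by affine cells as a union of Białynicki-Birula cells of $\spr$), so $H^*((\spr)^t)$ is generated in degree $2$; surjectivity of $i^*_t$ then follows once one checks, by the same dimension count that underlies \cref{free small cases}, that the classes $y_i=c_1^{T_\ev\times T_\hv}(\cF^i)$ spanning $H^2((\bv)^t)$ already surject onto $H^2((\spr)^t)$.

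The only genuinely laborious point is the explicit identification of $(\spr)^{(0,h)}$ and the degree‑$2$ check for the largest partition, $\lambda^\vee=(4,3,3)$: there $h^\vee$ has weights $\{\pm3h,\pm h\}$ on the $\fsp_4$-block and doubled weights $\{0,\pm2h\}$ elsewhere, so $\fl^\vee_{(0,h)}\cong\fsp_2\times\fgl_2\times\fgl_1\times\fgl_1$, the variety $(\bv)^{(0,h)}$ is a disjoint union of copies of $\cB(\fsp_2)\times\cB(\fgl_2)\cong\bP^1\times\bP^1$, and one must pin down exactly which subvariety $(\spr)^{(0,h)}$ is and match it against the uniform generators of $\ker i^*_{T_\ev\times T_\hv}$ exhibited in \cref{free small cases}, and likewise treat the remaining special ratios $t_0=\pm 2h,\ \pm h$. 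For the smaller partitions $(2n-2,1,1)$, $(2,2,2)$, $(2,2,1)$, $(2n-3,1,1,1)$, $(3,2,2,1)$, $(2,2,1,1)$ the same method is strictly easier, and $(1,1,1)$ is trivial since there $e^\vee=0$ and $\spr=\bv$.
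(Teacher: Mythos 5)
Your high‑level strategy — reduce to nonzero $t$, split off the easy directions (generic ratio gives finite fixed points; $h=0$ gives $\fl^\vee_t=\lv^\vee$ with the regular‑in‑Levi Springer fiber a point), and then treat finitely many special ratios $t_0/h$ by explicit computation — matches the paper's. You even fill a small narrative gap, since the paper never explicitly disposes of the $h=0$ case. But the proposal has two concrete problems at the precise stage where all the work happens.

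First, your description of the fixed‑point varieties is wrong. You assert that \emph{in every case} $(\spr)^t$ is a disjoint union of points and of products of projective lines. In the paper's case list for $\lambda^\vee=(4,3,3)$ at $t_0=\pm 1$, the component $X_{(3,1,3,1,-1)}$ is the blow‑up of $\bP^1\times\bP^1$ at a point, and $X_{(3,1,-1,3,1)}$ is a \emph{singular} union of two $\bP^1$'s meeting at a point; similarly $X_{(2,0,2,0)}$ for $\lambda^\vee=(3,2,2,1)$ is a blow‑up. So the geometry is not what you claim, and the follow‑on sentence — that $(\spr)^t$ is paved by Białynicki–Birula cells of $\spr$ — is not available for the singular components. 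Your conclusion that $H^*((\spr)^t)$ is generated in degree $2$ happens to remain true (for smooth projective surfaces with vanishing odd cohomology this is Poincaré duality; the nodal union of lines has $H^{>2}=0$), but it needs to be argued for the actual components, not the ones you listed.

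Second, the proposal stops short of the content of the lemma. The crucial step is the surjectivity of the classes $y_i=c_1(\cF^i)$ onto $H^2$ of each component of $(\spr)^t$. You wave at it with ``by the same dimension count that underlies \cref{free small cases},'' but no such count is exhibited, and the identification of $i^*_t$ with a combinatorial map via ``localizing the $T_\ev$‑equivariant cohomology of $(\spr)^t$'' does not, as written, produce a surjectivity statement: localization theorems give injectivity after inverting elements of $\bC[\ft_\ev]$, which controls the rational map, not the specialized map over $\bC$. What one actually needs (and what the paper supplies via the explicit component‑by‑component description, plus a citation to a companion lemma for the $t_0=0$, $h\neq 0$ direction) is the concrete enumeration of $(\spr)^t$ for each special ratio and a direct verification that pullback hits $H^2$. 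Until that is written out, the proposal is an outline of the right approach rather than a proof.
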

\begin{proof}
    Write $t$ as $(t_0, h)\in \ft_\ev\oplus \ft_\hv$. The fixed point locus depends only on the ratio of $t_0$ and $h$. Hence, we only need to consider $h=0$ or $h=1$. For $t_0=0$ and $h\neq 0$, the map $i^*_{t}$ is surjective by \cite[Lemma 10.9]{HMK2024}. Consider $h=1$ and $t_0\neq 0$. View $t$ as an element of $\fg$, then $t$ acts on the standard representation of $\fg$. The points of $(\bv)^{t}$ and $(\spr)^t$ are isotropic flags of $V$ that are stable under $t$. Therefore, if the weights of $t$ on $V$ are pairwise distinct, both $(\bv)^{t}$ and $(\spr)^t$ are discrete sets of points. Consequently, the map is $i^*_{t}$ surjective. In the remaining cases, we describe the fixed point varieties $X^t$ for $t=(t_0, 1)$ and $X= \bv, \spr$. The surjectivity of $i_t^{*}$ will naturally follow from these descriptions.  
    
    %For $h=0$ and $t_0\neq 0$, the variety $(\spr)^t$ consists of discrete points, see \cref{}. The map $i^*_{t}: H^*((\bv)^{t})\rightarrow H^*((\spr)^t)$ is realized  as $W/W_L$ copies of the map ... Therefore, it is surjective. 
      
    \begin{enumerate}
        \item Case $\fg^\vee$ of type B and $\lambda^\vee= (2,2,1)$. The weight of $t$ on $V$ are $0,t_0+1,t_0-1,-t_0+1,-t_0-1$. The cases we need to consider are $t_0=\pm 1$. The weights on $V$ are $0,2,0,-2,0$. The components of $(\bv)^{t}$ are isomorphic to the flag of $\fso(V_0)$, hence isomorphic to $\bP^1$. The components of $(\spr)^t$ are either $\bP^1$ or a single point.
        \item Case $\fg^\vee$ of type C and $\lambda^\vee= (2,2,2)$. The weights of $t$ on $V$ are $1,-1,t_0+1,t_0-1,-t_0+1,-t_0-1$. The cases we need to consider are $t_0=\pm 1$ and $t_0=\pm 2$.
        \begin{itemize}
            \item Consider $t_0=\pm 2$, the weights are $1,-1,3,1,-1,-3$. The components of $(\bv)^{t}$ are isomorphic to the flag of $\fsp(V_1\oplus V_{-1})$, hence isomorphic to $(\bP^3,\bP^1)$. The components of $(\spr)^t$ are either $\bP^1$ or a single point.
            \item  Consider $t_0=\pm 1$, the weights are $1,-1,2,0,0,-2$. The components of $(\bv)^{t}$ are isomorphic to the flag of $\fsp(V_0)$, hence isomorphic to $\bP^1$. The components of $(\spr)^t$ are either $\bP^1$ or a single point.
        \end{itemize}
        \item Case $\fg^\vee$ of type C and $\lambda^\vee= (2n-2,1,1)$. The weights of $t$ on $V$ are $t_0,-t_0, 2n-3,...,3-2n$. The cases we need to consider are $t_0\in \{2n-3,...,3-2n\}$, $t\neq 0$. The components of $(\bv)^{t}$ are isomorphic to the flag of $\fsp(V_{t_0}\oplus V_{-t_0})$, hence isomorphic to $(\bP^3,\bP^1)$. The components of $(\spr)^t$ are either $\bP^1$ or a single point.
        \item Case $\fg^\vee$ of type D and $\lambda^\vee= (2n-3,1,1,1)$. The weights of $t$ on $V$ are $t_0,-t_0, 2n-2,...,0,0,...,2-2n$. The cases we need to consider are $t_0\in \{2n-2,...,2-2n\}$, $t_0\neq 0$. The components of $(\bv)^{t}$ are isomorphic to the flag of $\fso(V_{t_0}\oplus V_{-t_0})$, hence isomorphic to $\bP^1\times \bP^1$. The components of $(\spr)^t$ are either $\bP^1$ or a single point.
        %They are all $\bP^1\times \bP^1$.
        \item Case $\fg^\vee$ of type D and $\lambda^\vee= (2,2,1,1)$. The weights of $t$ on $V$ are $0,0,t_0+1,t_0-1,-t+1,-t-1$. The cases we need to consider are $t=\pm 1,\pm 2$. The result is similar to the case $\fg^\vee$ of type B and $\lambda^\vee= (2,2,1)$.
        \item Consider $\fg^\vee$ of type D, $\lambda^\vee= (3,2,2,1)$. The weights of $t$ on $V$ are $2,0,0,-2,t_0+1,t_0-1,-t_0+1,-t_0-1$. The cases we need to consider are $t_0=\pm 1, \pm 3$. Elements of $(\spr)^t$ can be written as isotropic flags that are represented by eigenvectors $(u_1,u_2,u_3,u_4)$ of $t$ in $V$. The components of $(\spr)^t$ are parameterized by the weights of $u_i$. Write $X_\alpha$ for the components that correspond to the weight tuple $\alpha$.
    \begin{itemize}
            \item Consider $t_0=\pm 1$, the weights are $2,2,0,0,0,0,-2,-2$. The components of $(\bv)^{t}$ are isomorphic to the flag of $\fgl_2\times \fso(V_0)$, hence isomorphic to $\bP^1\times \bP^1\times\bP^1$. The weight tuples $\alpha$ so that $X_\alpha$ is not empty are $(2,2,0,0)$, $(2,0,2,0)$, $(0,2,2,0)$, $(0,2,0,2)$, $(2,0,0,-2)$, $(0,2,0,-2)$ and $(0,-2,2,0)$. The corresponding varieties $X_\alpha$ are as follows. 
            \begin{enumerate}
                \item $X_{(2,2,0,0)}\cong \bP^1\times \bP^1\times \bP^1$.
                \item $X_{(2,0,2,0)}$ is the blow-up of $\bP^1\times \bP^1$ at a point.
                \item $X_{(0,2,0,2)}$ is a single point.
                \item $X_{(2,0,0,-2)}\cong \bP^1$.
                \item $X_{(0,2,0,-2)}$ is a single point.
                \item $X_{(0,-2,2,0)}$ is a single point.
            \end{enumerate}
            
            \item Consider $t_0=\pm 3$, the weights are $4,2,2,0,0,-2,-2,-4$. The components of $(\bv)^{t}$ are isomorphic to the flag of $\fgl_1\times \fgl_2\times \fso(V_0)$, hence isomorphic to $\bP^1$. The components of $(\spr)^t$ are either $\bP^1$ or a single point.
    \end{itemize}
    \item Consider $\fg^\vee$ of type C, $\lambda^\vee= (4,3,3)$. The weights of $t$ on $V$ are $t_0+2, t_0, t_0-2, -t_0+2, -t_0, -t_0-2, 3,1,-1,-3$. The cases we need to consider are $t_0= \pm 1,3$. We use the notation $X_\alpha$ as in the previous case.
    \begin{itemize}
            \item Consider $t_0=\pm 1$, the weights are $3,3,1,1,1,-1,-1,-1,-3,-3$. The components of $(\bv)^{t}$ are isomorphic to the flag of $\fgl_2\times \fgl_3$, hence isomorphic to $\bP^1\times (\bP^1,\bP^2)$. If the tuple $\alpha$ contains the weight $-3$, then $X_\alpha$ is either a point or $\bP^1$. We consider cases where $\alpha$ does not contain $-3$. If $\alpha$ does not contain the weight $-1$, then $X_\alpha$ is a tower of projective bundles, see \cite{hoang2024geometryfixedpointsloci}. In the remaining cases, we have
            
            %The weight tuples $\alpha$ so that $X_\alpha$ is not empty are $(2,2,0,0)$, $(2,0,2,0)$, $(0,2,2,0)$, $(0,2,0,2)$, $(2,0,0,-2)$, $(0,2,0,-2)$ and $(0,-2,2,0)$. The corresponding varieties $X_\alpha$ are as follows. 
            \begin{enumerate}
                \item $X_{(3,3,1,1,-1)}\cong \bP^1\times (\bP^1,\bP^1)$.
                \item $X_{(3,3,1,-1,1)}\cong \bP^1\times \bP^1$.
                \item $X_{(3,1,3,1,-1)}$ is the blow-up of $\bP^1\times \bP^1$ at a single point. 
                \item $X_{(3,1,1,-1,3)}\cong X_{(3,1,1,3,-1)}\cong \bP^1$. 
                \item $X_{(3,1,-1,3,1)}$ is the union of two $\bP^1$ at a point. This is an example that $X_\alpha$ is not smooth. 
            \end{enumerate}
            \item Consider $t_0=\pm 3$, the weights are $3,1,-1,-3,5,3,1,-1,-3,-5$. The components of $(\bv)^{t}$ are isomorphic to the flag of $\fgl_2\times \fgl_2$, hence isomorphic to $\bP^1\times \bP^1$. The components of $(\spr)^t$ are $\bP^1\times \bP^1$ ,$\bP^1$ or a single point.
    \end{itemize}
    \end{enumerate} 
\end{proof}

\printbibliography
\end{document}